\definecolor{refcolor}{RGB}{0,0,190}
\theoremstyle{definition}
\newtheorem{theorem}{Theorem}[section]
\newtheorem{definition}[theorem]{Definition}
\newtheorem{proposition}[theorem]{Proposition}
\newtheorem{lemma}[theorem]{Lemma}
\newtheorem{corollary}[theorem]{Corollary}
\newtheorem{remark}[theorem]{Remark}
\newtheorem{example}[theorem]{Example}
\newtheorem{counterexample}{Counterexample}
\def\({\left(}
\def\){\right)}
\newcommand{\image}[3]{\begin{figure*}[ht]
\includegraphics[width=#2\textwidth]{#1}
\caption{\small{\label{#1}#3}}\end{figure*}}
\newcommand{\R}{\mathbb{R}}
\newcommand{\N}{\mathbb{N}}
\newcommand{\de}{\textnormal{d}}
\newcommand{\tn}{\textnormal}
\newcommand{\ds}{\displaystyle}
\newcommand{\ie}{\textit{i.e.} }
\newcommand{\cf}{\textit{cf.} }
\newcommand{\eg}{\textit{e.g.} }
\newcommand{\etc}{\textit{etc.}}
\newcommand{\citep}[2]{\cite{#1}, p. #2}
\newcommand{\cfeg}[2]{(\cf \eg \citep{#1}{#2})}
\newcommand{\seepeg}[2]{(see \eg \citep{#1}{#2})}
\newcommand{\rank}{\textnormal{rank }}
\newcommand{\diag}{\textnormal{diag}}
\newcommand{\mf}[1]{\mathfrak{#1}}
\newcommand{\mc}[1]{\mathcal{#1}}
\newcommand{\ms}[1]{\mathscr{#1}}
\newcommand{\IM}{\tn{im }}
\newcommand{\tensors}[3]{\mc T{}^{#1}_{#2}#3}
\newcommand{\sseuclid}[3]{\R^{#1,#2,#3}}
\newcommand{\sref}[1]{\S\ref{#1}}
\newcommand{\idxannih}[2]{#1{}^{#2}{}}
\newcommand{\idxcoannih}[2]{#1{}_{#2}{}}
\newcommand{\radix}[1]{\idxcoannih{#1}{\circ}}
\newcommand{\annih}[1]{\idxannih{#1}{\bullet}}
\newcommand{\coannih}[1]{\idxcoannih{#1}{\bullet}}
\newcommand{\annihg}{\coannih{g}}
\newcommand{\coannihg}{\annih{g}}
\newcommand{\metric}[1]{\langle#1\rangle}
\newcommand{\annihprod}[1]{\coannih{\langle\!\langle#1\rangle\!\rangle}}
\newcommand{\cocontr}{{{}_\bullet}}
\newcommand{\vectmodule}{\mf X}
\newcommand{\fivect}[1]{\vectmodule(#1)}
\newcommand{\fivectnull}[1]{\vectmodule_\circ(#1)}
\newcommand{\fiscal}[1]{\ms F(#1)}
\newcommand{\fiscalannih}[1]{\annih{\ms F}(#1)}
\newcommand{\fiformk}[2]{\mc A^{#1}(#2)}
\newcommand{\annihforms}[1]{\annih{\mc A}(#1)}
\newcommand{\annihformsk}[2]{\annih{\mc A}{}^{#1}(#2)}
\newcommand{\discformsk}[2]{{\mc A}_d{}^{#1}(#2)}
\newcommand{\srformsk}[2]{\annih{\ms A}{}^{#1}(#2)}
\newcommand{\mansigvar}[1]{#1{}_{\wr}}
\newcommand{\lie}{\mc L}
\newcommand{\kosz}{\mc K}
\newcommand{\der}{\nabla}
\newcommand{\dera}[1]{\der_{#1}}
\newcommand{\derb}[2]{\dera{#1}{#2}}
\newcommand{\derc}[3]{({\derb{#1}{#2}})(#3)}
\newcommand{\lder}{\der^{\flat}}
\newcommand{\ldera}[1]{\lder_{#1}}
\newcommand{\lderb}[2]{\ldera{#1}{#2}}
\newcommand{\lderc}[3]{(\lderb{#1}{#2})(#3)}
\newcommand{\curv}[2]{\mc R^\flat_{#1#2}}
\newcommand{\ric}{\tn{Ric}}
\newcommand{\cyclic}{\circlearrowleft}
\newcommand{\abs}[1]{\left|#1\right|}
\newcommand{\dsfrac}[2]{\ds{\frac{#1}{#2}}}
\newcommand{\flrw}{Friedmann-Lema\^itre-Robertson-Walker}
\newcommand{\FLRW}{FLRW}
\newcommand{\schw}{Schwarzschild}
\newcommand{\rn}{Reissner-Nordstr\"om}
\newcommand{\kn}{Kerr-Newman}
\def\hyph{-\penalty0\hskip0pt\relax}
\newcommand{\semiriem}{semi{\hyph}Riemannian}
\newcommand{\semieucl}{semi{\hyph}Euclidean}
\newcommand{\ssemieucl}{Semi{\hyph}Euclidean}
\newcommand{\semireg}{semi{\hyph}regular}
\newcommand{\ssemireg}{Semi{\hyph}regular}
\newcommand{\nondeg}{non{\hyph}degenerate}
\newcommand{\rstationary}{radical{\hyph}stationary}
\newcommand{\rrstationary}{Radical{\hyph}stationary}
\newcommand{\rannih}{radical{\hyph}annihilator}
\renewcommand\section{\@startsection {section}{1}{\z@}%
                                   {-3.5ex \@plus -1ex \@minus -.2ex}%
                                   {2.3ex \@plus.2ex}%
                                   {\center\normalfont\Large\bfseries}}
\renewcommand\subsection{\@startsection {subsection}{2}{\z@}%
                                   {-3.5ex \@plus -1ex \@minus -.2ex}%
                                   {2.3ex \@plus.2ex}%
                                   {\normalfont\large\bfseries}}
\renewcommand\subsubsection{\@startsection {subsubsection}{3}{\z@}%
                                   {-3.5ex \@plus -1ex \@minus -.2ex}%
                                   {2.3ex \@plus.2ex}%
                                   {\normalfont\bfseries}}
\begin{document}

\title{On Singular Semi-Riemannian Manifolds}
\author{Ovidiu Cristinel Stoica}
\address{Institute of Mathematics of the Romanian Academy, Bucharest, Romania.\\
Present address: Department of Theoretical Physics, \\National Institute of Physics and Nuclear Engineering -- Horia Hulubei, Bucharest, Romania.}
\email{cristi.stoica@theory.nipne.ro}

\begin{abstract}
On a Riemannian or a {\semiriem} manifold, the metric determines invariants like the Levi-Civita connection and the Riemann curvature. If the metric becomes degenerate (as in singular {\semiriem} geometry), these constructions no longer work, because they are based on the inverse of the metric, and on related operations like the contraction between covariant indices.

In this article we develop the geometry of singular {\semiriem} manifolds. First, we introduce an invariant and canonical contraction between covariant indices, applicable even for degenerate metrics. This contraction applies to a special type of tensor fields, which are {\rannih} in the contracted indices. Then, we use this contraction and the Koszul form to define the covariant derivative for {\rannih} indices of covariant tensor fields, on a class of singular {\semiriem} manifolds named {\rstationary}. We use this covariant derivative to construct the Riemann curvature, and show that on a class of singular {\semiriem} manifolds, named {\semireg}, the Riemann curvature is smooth.

We apply these results to construct a version of Einstein's tensor whose density of weight 2 remains smooth even in the presence of {\semireg} singularities. We can thus write a densitized version of Einstein's equation, which is smooth, and which is equivalent to the standard Einstein equation if the metric is {\nondeg}.

\bigskip
\noindent 
\keywords{singular semi-Riemannian manifolds,singular semi-Riemannian geometry,degenerate manifolds,semi-regular semi-Riemannian manifolds,semi-regular semi-Riemannian geometry,Einstein equation,singularity theorem}
\end{abstract}


\maketitle

\setcounter{tocdepth}{1}
\tableofcontents


\section{Introduction}

\subsection{Motivation and related advances}

Let $M$ be a differentiable manifold with a symmetric inner product structure, named metric, on its tangent bundle. If the metric is {\nondeg}, we can construct in a canonical way a Levi-Civita connection and the Riemann, Ricci and scalar curvatures. If the metric is allowed to be degenerate (hence $M$ is a singular {\semiriem} manifold), some obstructions prevented the construction of such invariants.

Degenerate metrics are useful because they can arise in various contexts in which {\semiriem} manifolds are used. They are encountered even in manifolds with {\nondeg} (but indefinite) metric, because the metric induced on a submanifold can be degenerate. The properties of such submanifolds were studied \eg in \cite{Kup87a,Kup87c}, \cite{Bej95,Bej96}.

In General Relativity, there are models or situations when the metric becomes degenerate or changes its signature. As the Penrose and Hawking \textit{singularity theorems} \cite{Pen65,Haw66i,Haw66ii,Haw67iii,HP70,HE95} show, Einstein's equation leads to singularities under very general conditions, apparently similar to the matter distribution in our Universe. Therefore, many attempts were done to deal with such singularities. For example it was suggested that Ashtekar's method of ``new variables'' \cite{ASH87,ASH91,Rom93a} can be used to pass beyond the singularities, because the variable $\widetilde E^a_i$ -- a densitized frame of vector fields -- defines the metric, which can be degenerate. Unfortunately, it turned out that in this case the connection variable $A_a^i$ may become singular \cf \eg \cite{Yon97}.

In some cosmological models the initial singularity of the Big Bang is eliminated by making the metric Riemannian for the early Universe. The metric changes the signature when traversing a hypersurface, becoming Lorentzian, so that time emerges from a space dimension. Some particular junction conditions were studied (see \cite{Sak84},\cite{Ellis92a,Ellis92b},\cite{Hay92,Hay93,Hay95}, \cite{Der93}, \cite{Dray91,Dray93,Dray94,Dray95,Dray96,Dray01}, \cite{Koss85,Koss87,Koss93a,Koss93b,Koss94a,Koss94b} \etc). 

Other situation where the metric can become degenerate was proposed by Einstein and Rosen, as a model of charged particles \cite{ER35}.

All these applications in Geometry and General Relativity demand a generalization of the standard methods of {\semiriem} Geometry, to cover the degenerate case. A degenerate metric prevents the standard constructions like covariant derivative and curvature. Manifolds endowed with degenerate metrics were studied by Moisil \cite{Moi40}, Strubecker \cite{Str41,Str42a,Str42b,Str45}, Vr\u{a}nceanu \cite{Vra42}. Notable is the work of Kupeli \cite{Kup87a,Kup87b,Kup87c}, which is limited to the constant signature case.

\subsection{Presentation of this article}

The purpose of this article is twofold:
\begin{enumerate}
	\item to provide a toolbox of geometric invariants, which extend the standard constructions from {\semiriem} geometry to the {\nondeg} case, with constant or variable signature,
	\item and to apply these constructions to extend Einstein's equation to a class of singular spacetimes.
\end{enumerate}

The first goal of this article is to construct canonical invariants such as the covariant derivative and Riemann curvature tensor, in the case of singular {\semiriem} geometry.  The main obstruction for this is the fact that when the metric is degenerate, it doesn't admit an inverse. This prohibits operations like index raising and contractions between covariant indices. This prevents the definition of a Levi-Civita connection, and by this, the construction of the curvature invariants. This article presents a way to construct such invariants even if the metric is degenerate, for a class of singular {\semiriem} manifolds which are named \textit{{\semireg}}.

The second goal is to apply the tools developed here to write a densitized version of Einstein's tensor which remains smooth in the presence of singularities, if the spacetime is {\semireg}. Consequently, we can write a version of Einstein's equation which is equivalent to the standard one if the metric is {\nondeg}. This allows us to extend smoothly the equations of General Relativity beyond the apparent limits imposed by the singularity theorems of Penrose and Hawking  \cite{Pen65,Haw66i,Haw66ii,Haw67iii,HP70,HE95}.

Section \sref{s_singular_semi_riemannian} contains generalities on singular {\semiriem} manifolds, in particular the radical bundle associated to the metric, made of the degenerate tangent vectors. In section \sref{s_dual_inner_prod} are studied the properties of the {\rannih} bundle, consisting in the covectors annihilating the degenerate vectors. Tensor fields which are {\rannih} in some of their covariant indices are introduced. On this bundle we can define a metric which is the next best thing to the inverse of the metric, and which will be used to perform contractions between covariant indices. Section \sref{s_tensors_contraction_sign_const} shows how we can contract covariant indices of tensor fields, so long as these indices are {\rannih}s.

Normally, the Levi-Civita connection is obtained by raising an index of the right member of the Koszul formula (named here Koszul form), operation which is not available when the metric is degenerate. Section \sref{s_koszul_form} studies the properties of the Koszul form, which are similar to those of the Levi-Civita connection. This allows us to construct in section \sref{s_cov_der} a sort of covariant derivative for vector fields, and in \sref{s_cov_der_covect} a covariant derivative for differential forms.

The notion of {\semireg} {\semiriem} manifold is defined in section \sref{s_riemann_curvature} as a special type of singular {\semiriem} manifold with variable signature on which the lower covariant derivative of any vector field, which is a $1$-form, admits smooth covariant derivatives.

The Riemann curvature tensor is constructed in \sref{s_riemann_curvature} with the help of the Koszul form and of the covariant derivative for differential forms introduced in section \sref{s_cov_der}. For {\semireg} {\semiriem} manifolds, the Riemann curvature tensor is shown to be smooth, and to have the same symmetry properties as in the {\nondeg} case. In addition, it is {\rannih} in all of its indices, this allowing the construction of the Ricci and scalar curvatures. Then, in section \sref{s_riemann_curvature_ii}, the Riemann curvature tensor is expressed directly in terms of the Koszul form, obtaining an useful formula. Then the Riemann curvature is compared with a curvature tensor obtained by Kupeli by other means \cite{Kup87b}.

Section \sref{s_semi_reg_semi_riem_man_example} presents two examples of {\semireg} {\semiriem} manifolds. The first is based on diagonal metrics, and the second on degenerate metrics which are conformal to {\nondeg} metrics.

The final section, \sref{s_einstein_tensor_densitized}, applies the results of this article to General Relativity. This section studies the Einstein's equation on {\semireg} {\semiriem} manifolds. It proposes a densitized version of this equation, which remains smooth on {\semireg} spacetimes, and reduces to the standard Einstein equation if the metric is {\nondeg}.

\section{Singular {\semiriem} manifolds}
\label{s_singular_semi_riemannian}

\subsection{Definition of singular {\semiriem} manifolds}
\label{s_singular_semi_riemannian_def}

\begin{definition}(see \eg \cite{Kup87b}, \citep{Pam03}{265} for comparison)
\label{def_sing_semiRiemm_man}
A \textit{singular {\semiriem} manifold} is a pair $(M,g)$, where $M$ is a differentiable manifold, and $g\in \Gamma(T^*M \odot_M T^*M)$ is a symmetric bilinear form on $M$, named \textit{metric tensor} or \textit{metric}. If the signature of $g$ is fixed, then 
$(M,g)$ is said to be with \textit{constant signature}. If the signature of $g$ is allowed to vary from point to point, $(M,g)$ is said to be with \textit{variable signature}. If $g$ is {\nondeg}, then $(M,g)$ is named \textit{{\semiriem} manifold}. If $g$ is positive definite, $(M,g)$ is named \textit{Riemannian manifold}.
\end{definition}

\begin{example}[Singular {\ssemieucl} Spaces $\sseuclid r s t$, \cf \eg \citep{Pam03}{262}]
\label{ex_sing_semi_euclidean}
Let $r,s,t\in\N$, $n=r+s+t$, We define the singular {\semieucl} space $\sseuclid r s t$ by:
\begin{equation}
	\sseuclid r s t:=(\R^n,\metric{,}),
\end{equation}
where the metric acts on two vector fields $X$, $Y$ on $\R^n$ at a point $p$ on the manifold, in the natural chart, by
\begin{equation}
	\metric{X,Y} = -\sum_{i=r+1}^s X^i Y^i + \sum_{j=r+s+1}^n X^j Y^j.
\end{equation}
If $r=0$ we fall over the {\semieucl} space $\R^n_s:=\sseuclid 0 s t$ (see \eg \citep{ONe83}{58}). If $s=0$ we find the degenerate Euclidean space. If $r=s=0$, then $t=n$ and we recover the Euclidean space $\R^n$ endowed with the natural scalar product.
\end{example}

\begin{definition}
\label{def_signature_change}
Let $p\in M$ be a point of a singular {\semiriem} manifold. We say that \textit{the metric changes its signature} at $p$, if any neighborhood of $p$ contains at least a point $q$ where the metric's signature is different than the metric's signature at $p$.
\end{definition}

\begin{remark}
\label{rem_sign_var_points}
Let $(M,g)$ be a singular {\semiriem} manifold and let $\mansigvar M\subseteq M$ be the set of the points where the metric changes its signature. From Definition \ref{def_signature_change}, the set $\mansigvar M$ is closed. The set $M-\mansigvar M$ is dense in $M$, and open, and it is a union of singular {\semiriem} manifolds with constant signature.
\end{remark}

\begin{example}
We define the following metric on the manifold $\R^n$, $n\in\N$:
\begin{equation}
	\metric{X,Y} = \sum_{i=1}^n f_i X^i Y^i
\end{equation}
where $X$, $Y$ are two vector fields on $\R^n$, and $f_i\in\fiscal{\R^n}$. This metric endows $\R^n$ with a structure of singular {\semiriem} manifold. The metric has constant signature on the regions of the form $\prod_{i=1}^n(a_i,b_i)\subseteq\R^n$, where the intervals $(a_i,b_i)$ have the property that $f_i|_{(a_i,b_i)}$ has constant sign (is either $+1$, or $-1$, or constantly equal to $0$). The metric changes its signature at the points $p=(x_1,\ldots,x_i,\ldots,x_n)\in\R^n$ having the property that for some $i\in\{1,\ldots,n\}$, $x_i$ is on the boundary of the support of the function $f_i$.
\end{example}

\subsection{The radical of a singular {\semiriem} manifold}
\label{s_radix}

\begin{definition}(\cf \eg \citep{Bej95}{1}, \citep{Kup96}{3} and \citep{ONe83}{53})
Let $(V,g)$ be a finite dimensional inner product space, where the inner product $g$ may be degenerate. The totally degenerate space $\radix{V}:=V^\perp$ is named the \textit{radical} of $V$. An inner product $g$ on a vector space $V$ is {\nondeg} if and only if $\radix{V}=\{0\}$.
\end{definition}

\begin{definition}(see \eg \citep{Kup87b}{261}, \citep{Pam03}{263})
We denote by $\radix{T}M$ and we call \textit{the radical of $TM$} the following subset of the tangent bundle: $\radix{T}M=\cup_{p\in M}\radix{(T_pM)}$. We can define vector fields on $M$ valued in $\radix{T}M$, by taking those vector fields $W\in\fivect M$ for which $W_p\in\radix{(T_pM)}$. We denote by $\fivectnull{M}\subseteq\fivect M$ the set of these sections -- they form a vector space over $\R$ and a module over $\fiscal{M}$. $\radix{T}M$ is a vector bundle if and only if the signature of $g$ is constant on all $M$, and in this case, $\radix{T}M$ is a distribution.
\end{definition}

\begin{example}
\label{ex_sing_semi_euclidean_radix}
The radical $\radix{T}\sseuclid r s t$ of the singular {\semieucl} manifold $\sseuclid r s t$ in the Example \ref{ex_sing_semi_euclidean} is spanned at each point $p$ by the tangent vectors $\partial_{ap}$ with $a\leq r$:
\begin{equation}
	\radix{T}\sseuclid r s t = \bigcup_{p\in\sseuclid r s t}\tn{span}({\{(p,\partial_{ap})|\partial_{ap}\in T_p\sseuclid r s t,a\leq r\}}).
\end{equation}
The sections of $\radix{T}\sseuclid r s t$ are therefore given by
\begin{equation}
	\fivectnull{\sseuclid r s t} = \{X\in\fivect{\sseuclid r s t}|X=\sum_{a=1}^r X^a\partial_a\}.
\end{equation}
\end{example}

\section{The {\rannih} inner product space}
\label{s_dual_inner_prod}

Let $(V,g)$ be an inner product vector space. If the inner product $g$ is {\nondeg}, it defines an isomorphism $\flat:V\to V^*$ (see \eg \citep{Gibb06}{15}; \citep{GHLF04}{72}). If $g$ is degenerate, $\flat$ remains a linear morphism, but not an isomorphism. This is why we can no longer define a dual for $g$ on $V^*$ in the usual sense. We will see that we can still define canonically an inner product $\annihg\in\flat(V)^*\odot\flat(V)^*$, and use it to define contraction and index raising in a weaker sense than in the {\nondeg} case. This rather elementary construction can be immediately extended to singular {\semiriem} manifolds. It provides a tool to contract covariant indices and construct the invariants we need.

\subsection{The {\rannih} vector space}
\label{s_rad_annih_space}

This section applies well-known elementary properties of linear algebra, with the purpose is to extend fundamental notions related to the {\nondeg} inner product $g$ on a vector space $V$ induced on the dual space $V^*$ \cfeg{Rom08}{59}, to the case when $g$ is allowed to be degenerate. Let $(V,g)$ be an inner product space over $\R$.

\begin{definition}
\label{def_inner_morphism}
The inner product $g$ defines a vector space morphism, named the \textit{index lowering morphism} $\flat:V\to V^*$, by associating to any $u\in V$ a linear form $\flat(u):V\to \R$ defined by $\flat(u)v:=\metric{u,v}$. Alternatively, we use the notation $u^\flat$ for $\flat(u)$. For reasons which will become apparent, we will also use the notation $\annih u:=u^\flat$.
\end{definition}

\begin{remark}
\label{thm_radix_ker}
It is easy to see that $\radix{V}=\ker\flat$, so $\flat$ is an isomorphism if and only if $g$ is {\nondeg}.
\end{remark}

\begin{definition}
\label{def_radical_annihilator}
The \textit{{\rannih}} vector space $\annih{V}:=\IM\flat\subseteq V^*$ is the space of $1$-forms $\omega$ which can be expressed as $\omega=\annih u$ for some $u$, and they act on $V$ by $\omega(v)=\metric{u,v}$.
\end{definition}

Obviously, in the case when $g$ is {\nondeg}, we have the identification $\annih{V}=V^*$.

\begin{remark}
\label{thm_img_ker_radix}
In other words, $\annih{V}$ is the annihilator of $\radix{V}$. It follows that $\dim\annih{V}+\dim\radix{V}=n$.
\end{remark}

\begin{remark}
Any $u'\in V$ satisfying $\annih{u'}=\omega$ differs from $u$ by $u'-u\in\radix{V}$. Such $1$-forms $\omega\in\annih{V}$ satisfy $\omega|_{\radix{V}} = 0$.
\end{remark}

\begin{definition}
\label{def_co_inner_product}
On the vector space $\annih{V}$ we can define a unique {\nondeg} inner product $\annihg$ by $\annihg(\omega,\tau):=\metric{u,v}$, where $\annih u=\omega$ and $\annih v=\tau$. We alternatively use the notation $\annihprod{\omega,\tau}=\annihg(\omega,\tau)$.
\end{definition}

\begin{proposition}
The inner product $\annihg$ from above is well-defined, being independent on the vectors $u,v$ chosen to represent the $1$-forms $\omega$, $\tau$.
\end{proposition}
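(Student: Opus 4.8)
The plan is to reduce the statement to a single fact: that radical vectors are $g$-orthogonal to all of $V$. By Remark~\ref{thm_radix_ker} we have $\radix V=\ker\flat$, so two vectors map to the same $1$-form under $\flat$ precisely when they differ by an element of $\radix V$. Thus the only ambiguity in the representatives $u,v$ is an ambiguity by radical vectors, and I would show that such an ambiguity leaves $\metric{u,v}$ unchanged.

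Concretely, I would start from $\omega,\tau\in\annih V$ and suppose $\annih u=\annih{u'}=\omega$ and $\annih v=\annih{v'}=\tau$. Since $\flat$ is linear, the differences $w:=u'-u$ and $w':=v'-v$ lie in $\ker\flat=\radix V$. Writing $u'=u+w$, $v'=v+w'$ and expanding by bilinearity of $g$ would give
\begin{equation}
\metric{u',v'}=\metric{u,v}+\metric{u,w'}+\metric{w,v}+\metric{w,w'}.
\end{equation}
Since $\radix V=V^\perp$, each of the three extra terms vanishes, so $\metric{u',v'}=\metric{u,v}$; this is exactly the independence asserted, \ie that $\annihg(\omega,\tau)=\annihprod{\omega,\tau}$ is well-defined.

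I do not expect a genuine obstacle here; the only point needing care is to invoke the symmetry of $g$ so that \emph{both} mixed terms $\metric{u,w'}$ and $\metric{w,v}$ are annihilated by the fact that $w,w'\in V^\perp$ (one of them would otherwise require orthogonality on the ``wrong'' side). With the same ideas one also obtains the remaining claims of Definition~\ref{def_co_inner_product} essentially for free: bilinearity and symmetry of $\annihg$ descend from those of $g$ through $\flat$, and non-degeneracy follows since $\annihg(\omega,\tau)=0$ for all $\tau$ forces $\metric{u,v}=0$ for all $v\in V$, whence $u\in\radix V=\ker\flat$ and so $\omega=\annih u=0$.
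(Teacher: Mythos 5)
Your proposal is correct and follows essentially the same route as the paper: both arguments write $u'=u+w$, $v'=v+w'$ with $w,w'\in\radix V=\ker\flat$, expand $\metric{u',v'}$ by bilinearity, and kill the three extra terms using $\radix V=V^\perp$. Your added remarks on symmetry, bilinearity, and non-degeneracy are sound but go beyond what this particular proposition requires.
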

\begin{proof}
If $u',v'\in V$ are other vectors satisfying $\annih{u'}=\omega$ and $\annih{v'}=\tau$, then $u'-u\in\radix{V}$ and $v'-v\in\radix{V}$. $\metric{u',v'}=\metric{u,v}+\metric{u'-u,v}+\metric{u,v'-v}+\metric{u'-u,v'-v}=\metric{u,v}$.
\end{proof}

\begin{proposition}
\label{thm_cometric_signature}
The inner product $\annihg$ from above is {\nondeg}, and if $g$ has the signature $(r,s,t)$, then the signature of $\annihg$ is $(0,s,t)$.
\end{proposition}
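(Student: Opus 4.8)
The plan is to reduce everything to a single adapted basis of $V$ and then read off both assertions directly from Definition \ref{def_co_inner_product}. First I would invoke the classification of symmetric bilinear forms (Sylvester's law of inertia) to fix an orthogonal basis $(e_1,\dots,e_n)$ of $V$ for which $\metric{e_i,e_j}=0$ whenever $i\neq j$, and whose diagonal entries $\metric{e_i,e_i}$ equal $0$ for $1\le i\le r$, equal $-1$ for $r<i\le r+s$, and equal $+1$ for $r+s<i\le n$. By Remark \ref{thm_radix_ker} the first $r$ vectors span $\radix V=\ker\flat$, so $\flat(e_a)=0$ for $a\le r$.

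Next I would check that the remaining $s+t$ covectors $\flat(e_{r+1}),\dots,\flat(e_n)$ form a basis of $\annih V$. They span $\annih V=\IM\flat$ because $\flat$ annihilates $e_1,\dots,e_r$, and they are linearly independent, since a vanishing linear combination would place a nonzero vector of $\vecgen{e_{r+1},\dots,e_n}$ inside $\ker\flat=\radix V=\vecgen{e_1,\dots,e_r}$, which is impossible. This matches the count $\dim\annih V=n-r=s+t$ furnished by Remark \ref{thm_img_ker_radix}.

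The decisive step is then a one-line computation: by Definition \ref{def_co_inner_product}, $\annihg(\flat(e_i),\flat(e_j))=\metric{e_i,e_j}$ for all $i,j>r$. Hence, in the basis $(\flat(e_{r+1}),\dots,\flat(e_n))$, the Gram matrix of $\annihg$ is $\diag(-1,\dots,-1,+1,\dots,+1)$, carrying exactly $s$ entries equal to $-1$, exactly $t$ entries equal to $+1$, and no zero entry. Reading off this diagonal shows simultaneously that $\annihg$ is {\nondeg} and that its signature is $(0,s,t)$.

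I do not anticipate a serious obstacle; the only step warranting care is the middle one, namely confirming that $\flat$ restricts to a linear isomorphism from $\vecgen{e_{r+1},\dots,e_n}$ onto $\annih V$, so that the adapted basis of $V$ transports to an adapted basis of $\annih V$. This is pure rank-nullity bookkeeping resting on Remarks \ref{thm_radix_ker} and \ref{thm_img_ker_radix}. As an alternative route to the non-degeneracy alone, one could argue directly that if $\annihg(\omega,\cdot)$ vanishes identically with $\omega=\flat(u)$, then $\metric{u,v}=0$ for every $v$, whence $u\in\radix V$ and $\omega=\flat(u)=0$; but the adapted-basis argument delivers both the non-degeneracy and the signature in one stroke.
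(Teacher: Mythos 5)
Your proof is correct and takes essentially the same route as the paper's: both diagonalize $g$ in an adapted orthonormal basis, push the non-radical basis vectors through $\flat$ to obtain a basis of $\annih{V}$, and read off that $\annihg$ is {\nondeg} with signature $(0,s,t)$ from the resulting diagonal Gram matrix. The only difference is that you spell out the linear-independence and rank-nullity bookkeeping that the paper's proof leaves implicit.
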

\begin{proof}
Let's take a basis $(e_a)_{a=1}^n$ in which the inner product is diagonal, with the first $r$ diagonal elements being $0$. We have $\annih{e_a}=0$ for $a\in\{1,\ldots,r\}$, and the $1$-forms $\omega_a:=\annih{e_{r+a}}$ for $a\in\{1,\ldots,s+t\}$ are the generators of $\annih{V}$. They satisfy $\annihprod{\omega_a,\omega_b}=\metric{e_{r+a},e_{r+b}}$. Therefore, $(\omega_a)_{a=1}^{s+t}$ are linear independent and the signature of $\annihg$ is $(0,s,t)$.
\end{proof}

Figure \ref{degenerate-metric} illustrates the various spaces associated with a degenerate inner product space $(V,g)$ and the inner products induced by $g$ on them.

\image{degenerate-metric}{1.0}{
$(V,g)$ is an inner product vector space. The morphism $\flat:V\to V^*$ is defined by $u\mapsto \annih{u}:=\flat(u)=u^\flat=g(u,\_)$. The radical $\radix{V}:=\ker\flat=V^\perp$ is the set of isotropic vectors in $V$. $\annih{V}:=\IM{\flat}\leq V^*$ is the image of $\flat$. The inner product $g$ induces on $\annih{V}$ an inner product defined by $\annihg(u_1^\flat,u_1^\flat):=g(u_1,u_2)$, which is the inverse of $g$ iff $\det g\neq 0$. The quotient $\coannih{V}:=V/\radix{V}$ consists in the equivalence classes of the form $u+\radix{V}$. On $\coannih{V}$, $g$ induces an inner product $\coannihg(u_1+\radix{V},u_2+\radix{V}):=g(u_1,u_2)$.}

\subsection{The {\rannih} vector bundle}
\label{s_annih}

\begin{definition}
We denote by $\annih{T}M$ the subset of the cotangent bundle defined as
\begin{equation}
	\annih{T}M=\bigcup_{p\in M}\annih{(T_pM)}
\end{equation}
where $\annih{(T_pM)} \subseteq T^*_pM$ is the space of covectors at $p$ which can be expressed as $\omega_p(X_p)=\metric{Y_p,X_p}$ for some $Y_p\in T_p M$ and any $X_p\in T_p M$. $\annih{T}M$ is a vector bundle if and only if the signature of the metric is constant. We can define sections of $\annih{T}M$ in the general case, by
\begin{equation}
	\annihforms{M}:=\{\omega\in\fiformk 1{M}|\omega_p\in\annih{(T_pM)}\tn{ for any }p\in M\}.
\end{equation}
\end{definition}

\begin{remark}
$\annih{(T_pM)}$ is the annihilator space \cfeg{Rom08}{102} of the radical space $\radix{T}_pM$, that is, it contains the linear forms $\omega_p$ which satisfy $\omega_p|_{\radix{T}_pM}=0$.
\end{remark}

\begin{example}
\label{ex_sing_semi_euclidean_annih}
The {\rannih} $\annih{T}\sseuclid r s t$ of the singular {\semieucl} manifold $\sseuclid r s t$ in the Example \ref{ex_sing_semi_euclidean} is:
\begin{equation}
	\annih{T}\sseuclid r s t = \bigcup_{p\in\sseuclid r s t}\tn{span}({\{\de x^a\in T^*_p\sseuclid r s t|a> r\}}).
\end{equation}
Consequently, the {\rannih} $1$-forms have the general form
\begin{equation}
	\omega=\sum_{a=r+1}^n\omega_a\de x^a,
\end{equation}
and
\begin{equation}
	\annihforms{\sseuclid r s t}=\{\omega\in\fiformk 1{\sseuclid r s t}|\omega^i=0,i\leq r\}.
\end{equation}
\end{example}

\subsection{The {\rannih} inner product in a basis}

Let us consider an inner product space $(V,g)$, and a basis $(e_a)_{a=1}^n$ of $V$ in which $g$ takes the diagonal form $g=\diag(\alpha_1,\alpha_2,\ldots,\alpha_n)$, $\alpha_a\in\R$ for all $1\leq a\leq n$. The inner product satisfies:
\begin{equation}
g_{ab}=\metric{e_a,e_b}=\alpha_a\delta_{ab}.
\end{equation}
We also have
\begin{equation*}
\annih{e_a}(e_b):=\metric{e_a,e_b}=\alpha_a\delta_{ab},
\end{equation*}
and, if $(e^{*a})_{a=1}^n$ is the dual basis of $(e_a)_{a=1}^n$,
\begin{equation}
\annih{e_a}=\alpha_a e^{*a}.
\end{equation}

\begin{proposition}
\label{thm_cometric_in_basis}
If in a basis the inner product has the form $g_{ab}=\alpha_a\delta_{ab}$, then 
\begin{equation}
\annihg^{ab}=\frac 1{\alpha_a}\delta^{ab},
\end{equation}for all $a$ so that $\alpha_a\neq 0$.
\end{proposition}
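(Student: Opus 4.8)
The plan is to read the components of $\annihg$ directly off its definition, after first pinning down the correct basis of $\annih{V}$ in which those components live. So the first step is to identify this basis: from the relation $\annih{e_a}=\alpha_a e^{*a}$ established just above the statement, whenever $\alpha_a\neq 0$ I can write $e^{*a}=\frac{1}{\alpha_a}\annih{e_a}$, which exhibits $e^{*a}$ as an element of $\annih{V}=\IM\flat$. By Proposition \ref{thm_cometric_signature} the dimension of $\annih{V}$ equals the number of indices with $\alpha_a\neq 0$, so the family $\{e^{*a}:\alpha_a\neq 0\}$ is exactly a basis of $\annih{V}$. This is what makes the symbol $\annihg^{ab}$ meaningful, and it fixes the admissible index range to those $a$ with $\alpha_a\neq 0$, matching the hypothesis of the statement.

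The second step is the computation itself. For admissible indices I would evaluate $\annihg^{ab}=\annihg(e^{*a},e^{*b})$ by substituting $e^{*a}=\frac{1}{\alpha_a}\annih{e_a}$ and $e^{*b}=\frac{1}{\alpha_b}\annih{e_b}$, then invoking bilinearity together with the defining property $\annihg(\annih{e_a},\annih{e_b})=\metric{e_a,e_b}$ from Definition \ref{def_co_inner_product}. This yields $\annihg^{ab}=\frac{1}{\alpha_a\alpha_b}\metric{e_a,e_b}=\frac{1}{\alpha_a\alpha_b}\,\alpha_a\delta_{ab}=\frac{1}{\alpha_b}\delta^{ab}$, and since the Kronecker delta forces $a=b$ on the surviving term this is the same as $\frac{1}{\alpha_a}\delta^{ab}$, as claimed.

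I expect no genuine obstacle here; the one point demanding care (the mild ``hard part'') is purely bookkeeping of indices. The formula cannot be asserted for indices with $\alpha_a=0$, since then $e^{*a}$ need not belong to $\annih{V}$ and $1/\alpha_a$ is undefined; this is precisely why the statement is restricted to $\alpha_a\neq 0$. Everything else rests only on bilinearity and on the well-definedness of $\annihg$, both already available, so the argument is a direct substitution.
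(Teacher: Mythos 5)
Your proof is correct and follows essentially the same route as the paper: both rest on the relation $\annih{e_a}=\alpha_a e^{*a}$ together with the defining property $\annihprod{\annih{e_a},\annih{e_b}}=\metric{e_a,e_b}=\alpha_a\delta_{ab}$, and your direct substitution is just a rearrangement of the paper's step of equating the two evaluations of $\annihprod{\annih{e_a},\annih{e_b}}$. Your preliminary check that $\{e^{*a}:\alpha_a\neq 0\}$ is a basis of $\annih{V}$ is a slightly more careful version of the paper's closing remark that the case $\alpha_a=0$ does not arise because $\annihg$ is defined only on $\IM\flat$.
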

\begin{proof}
Since
\begin{equation*}
\annihprod{\annih{e_a},\annih{e_b}}=\metric{e_a,e_b}=\alpha_a\delta_{ab},
\end{equation*}
and in the same time
\begin{equation*}
\annihprod{\annih{e_a},\annih{e_b}}=\alpha_a \alpha_b \annihprod{e^{*a},e^{*b}}=\alpha_a \alpha_b\annihg^{ab},
\end{equation*}
we have that
\begin{equation*}
\alpha_a \alpha_b\annihg^{ab}=\alpha_a \delta_{ab},
\end{equation*}
This leads, for $\alpha_a\neq 0$, to
\begin{equation*}
\annihg^{ab}=\frac 1{\alpha_a}\delta_{ab}.
\end{equation*}
The case when $\alpha_a = 0$ doesn't happen, since $\annihg$ is defined only on $\IM\flat$.
\end{proof}

\subsection{Radical and {\rannih} tensors}
\label{s_radix_annih_tensors}

For inner product vector spaces we define tensors that are radical in a contravariant slot, and {\rannih} in a covariant slot, and give their characterizations.

\begin{definition}
\label{def_radix_annih_tensor_field}
Let $T$ be a tensor of type $(r,s)$. We call it \textit{radical} in the $k$-th contravariant slot if $T\in \tensors{k-1}0 {M}\otimes_M\radix{T}M\otimes_M \tensors{r-k}s{M}$. We call it \textit{{\rannih}} in the $l$-th covariant slot if  $T\in \tensors r{l-1}{M}\otimes_M\annih{T}M\otimes_M \tensors 0{s-l}{M}$.
\end{definition}

\begin{proposition}
\label{thm_radical_contravariant_index}
A tensor $T\in\tensors r s {M}$ is radical in the $k$-th contravariant slot if and only if its contraction $C^k_{s+1}(T\otimes\omega)$ with any {\rannih} linear $1$-form $\omega\in \fiformk 1{M}$ is zero.
\end{proposition}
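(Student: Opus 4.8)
The plan is to reduce the stated equivalence to a pointwise fact in linear algebra, the double annihilator theorem. Both conditions are pointwise: by Definition \ref{def_radix_annih_tensor_field}, $T$ is radical in the $k$-th contravariant slot exactly when, at each $p\in M$ and for each choice of the remaining $r-1$ contravariant and $s$ covariant arguments, the resulting vector $v\in T_pM$ occupying the $k$-th slot lies in $\radix{(T_pM)}$; and the tensor field $C^k_{s+1}(T\otimes\omega)$ vanishes iff it vanishes at every point. So I would fix $p$, write $T$ in a local frame $(e_a)$ with dual coframe $(e^{*a})$, and for each fixed configuration of the other indices set $v:=\sum_a T^{\ldots a\ldots}e_a\in T_pM$. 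A short index computation then shows that the components of $C^k_{s+1}(T\otimes\omega)$ at $p$ are precisely the scalars $\omega_p(v)$, so the contraction vanishes iff $\omega_p(v)=0$ for every such $v$ and every $p$.

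Given this reformulation, the forward implication is immediate: if $T$ is radical in the $k$-th slot then each $v\in\radix{(T_pM)}$, and any {\rannih} form $\omega$ has $\omega_p\in\annih{(T_pM)}$, which by Remark \ref{thm_img_ker_radix} is the annihilator of $\radix{(T_pM)}$; hence $\omega_p(v)=0$ and $C^k_{s+1}(T\otimes\omega)=0$.

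For the converse I would invoke finite-dimensional biduality. Since $\annih{(T_pM)}$ is the annihilator of $\radix{(T_pM)}$ (Remark \ref{thm_img_ker_radix}), the double annihilator theorem gives $\{v\in T_pM:\eta(v)=0\text{ for all }\eta\in\annih{(T_pM)}\}=\radix{(T_pM)}$. Thus it would suffice to deduce from the hypothesis that $\eta(v)=0$ for every $\eta\in\annih{(T_pM)}$ and every $k$-th slot vector $v$. The one point I expect to be the main obstacle is that the hypothesis only tests \emph{global} {\rannih} $1$-forms, whereas I need to probe an arbitrary covector $\eta$ sitting at the single point $p$. I would resolve this with a realization trick: by Definition \ref{def_radical_annihilator}, $\eta=\annih{u}=\metric{u,\cdot}$ for some $u\in T_pM$; extending $u$ to any smooth vector field $U$ with $U_p=u$, the $1$-form $\annih{U}:=\metric{U,\cdot}$ is smooth and automatically {\rannih}, since its value at each point lies in $\IM\flat$, and $(\annih{U})_p=\eta$. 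Substituting $\omega=\annih{U}$ into the hypothesis gives $\eta(v)=0$; letting $\eta$ range over $\annih{(T_pM)}$ and applying the double annihilator theorem forces $v\in\radix{(T_pM)}$. As $p$ and the index configuration are arbitrary, $T$ is radical in the $k$-th contravariant slot, completing the equivalence.
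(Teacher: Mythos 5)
Your proof is correct, and its core coincides with the paper's: both reduce the claim to pointwise linear algebra, observe that the components of $C^k_{s+1}(T\otimes\omega)$ are exactly the scalars $\omega_p(v)$ evaluated on the slot-$k$ vectors $v$, and settle the equivalence by the duality between $\annih{(T_pM)}$ and $\radix{(T_pM)}$. There are two genuine differences. First, the paper works ``for simplicity'' on a single inner product space $(V,g)$, decomposes $T=\sum_\alpha S_\alpha\otimes v_\alpha$ into terms with the $S_\alpha$ linearly independent, and extracts $\omega(v_\alpha)=0$ term by term; your frame-component version reaches the same scalars directly, one index configuration at a time, and so needs no independence argument. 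Second, and more substantively, you identify and close a gap that the paper's reduction silently skips: the hypothesis quantifies only over \emph{global} smooth {\rannih} $1$-forms, whereas the converse must test an arbitrary covector $\eta\in\annih{(T_pM)}$ at a single point. Your realization trick --- write $\eta=\annih{u}$ via Definition \ref{def_radical_annihilator}, extend $u$ to a smooth vector field $U$ with $U_p=u$, and note that $g(U,\cdot)$ is smooth and automatically {\rannih} at every point --- is precisely the justification the paper's pointwise reduction tacitly assumes, so your argument is the more complete one at the manifold level. One simplification is available: you do not actually need the double annihilator theorem. Since $\annih{(T_pM)}=\IM\flat_p$, the condition $\eta(v)=0$ for all $\eta\in\annih{(T_pM)}$ reads $\metric{u,v}=0$ for all $u\in T_pM$, which gives $v\in(T_pM)^\perp=\radix{(T_pM)}$ directly from the definition of the radical; this is in effect how the paper concludes, and it is marginally more elementary than invoking finite-dimensional biduality through Remark \ref{thm_img_ker_radix}.
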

\begin{proof}
For simplicity, we can work on an inner product space $(V,g)$ and consider $k=r$ (if $k<r$, we can make use of the permutation automorphisms of the tensor space $\tensors r s V$). 
T can be written as a sum of linear independent terms having the form $\sum_{\alpha}S_{\alpha}\otimes v_{\alpha}$, with $S_{\alpha}\in\tensors{r-1}s V$ and $v_{\alpha}\in V$. We keep only the terms with $S_{\alpha}\neq 0$. The contraction of the $r$-th contravariant slot with any $\omega\in\annih{V}$ becomes $\sum_{\alpha}S_{\alpha}\omega(v_{\alpha})$.

If $T$ is radical in the $r$-th contravariant slot, for all $\alpha$ and any $\omega\in\annih{V}$ we have $\omega(v_{\alpha})=0$, therefore $\sum_{\alpha}S_{\alpha}\omega(v_{\alpha})=0$.

Reciprocally, if $\sum_{\alpha}S_{\alpha}\omega(v_{\alpha})=0$, it follows that for any $\alpha$, $S_{\alpha}\omega(v_{\alpha})=0$. Then, $\omega(v_{\alpha})=0$, because $S_{\alpha}\neq 0$. It follows that $v_{\alpha}\in\radix{V}$.
\end{proof}

\begin{proposition}
\label{thm_radical_annihilator_covariant_index}
A tensor $T\in\tensors r s {M}$ is {\rannih} in the $l$-th covariant slot if and only if its $l$-th contraction with any radical vector field is zero.
\end{proposition}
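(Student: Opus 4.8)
The plan is to mirror the proof of Proposition \ref{thm_radical_contravariant_index}, exploiting the duality between radical vectors and {\rannih} covectors. Since being {\rannih} in a covariant slot and the vanishing of the $l$-th contraction are both pointwise conditions on tensor fields, I would first reduce the statement to a purely linear-algebraic one on a fixed inner product space $(V,g)$. Using the permutation automorphisms of the tensor space $\tensors r s V$, I would further assume without loss of generality that $l=s$, so that the slot under consideration is the last covariant one.

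Next I would decompose $T$ as a finite sum $T=\sum_\alpha S_\alpha\otimes\omega_\alpha$ with $S_\alpha\in\tensors r{s-1}V$ and $\omega_\alpha\in V^*$, discarding the terms for which $S_\alpha=0$ so that the remaining $S_\alpha$ are linearly independent. With this normalization, the $l$-th contraction of $T$ with a radical vector $W\in\radix{V}$ equals $\sum_\alpha S_\alpha\,\omega_\alpha(W)$, which is the expression both implications will analyze.

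For the forward direction, suppose $T$ is {\rannih} in the $l$-th slot; then by Definition \ref{def_radix_annih_tensor_field} every $\omega_\alpha$ lies in $\annih{V}$. By Remark \ref{thm_img_ker_radix}, $\annih{V}$ is precisely the annihilator of $\radix{V}$, so $\omega_\alpha(W)=0$ for each $W\in\radix{V}$, and the contraction vanishes. For the converse, suppose $\sum_\alpha S_\alpha\,\omega_\alpha(W)=0$ for all $W\in\radix{V}$. Because the $S_\alpha$ are linearly independent, each scalar coefficient $\omega_\alpha(W)$ must vanish individually, for every $W\in\radix{V}$. Hence each $\omega_\alpha$ annihilates $\radix{V}$, that is $\omega_\alpha\in\annih{V}$ by Remark \ref{thm_img_ker_radix}, and therefore $T\in\tensors r{s-1}V\otimes\annih{V}$ is {\rannih} in the $l$-th slot.

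I expect no serious obstacle, since the argument is the formal dual of Proposition \ref{thm_radical_contravariant_index}. The only point requiring care is the linear-independence step in the converse: one must arrange the decomposition of $T$ so that the surviving $S_\alpha$ are genuinely independent, which is what licenses separating the coefficients $\omega_\alpha(W)$ and concluding that each vanishes. The identification of $\annih{V}$ with the annihilator of $\radix{V}$ furnished by Remark \ref{thm_img_ker_radix} does the essential work in both directions.
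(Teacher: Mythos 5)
Your proposal is correct and takes essentially the same route as the paper, whose entire proof is the remark that ``the proof goes as in Proposition \ref{thm_radical_contravariant_index}'': you have simply carried out that dualization explicitly, decomposing $T=\sum_\alpha S_\alpha\otimes\omega_\alpha$ with the $S_\alpha$ linearly independent and invoking the identification of $\annih{V}$ with the annihilator of $\radix{V}$ from Remark \ref{thm_img_ker_radix}. Your closing caveat is well placed --- merely discarding the terms with $S_\alpha=0$ does not by itself make the remaining $S_\alpha$ independent (one should, e.g., expand over a basis of $\tensors{r}{s-1}{V}$) --- but this is the same looseness present in the paper's own proof of Proposition \ref{thm_radical_contravariant_index}, and you flag and repair it.
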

\begin{proof}
The proof goes as in Proposition \ref{thm_radical_contravariant_index}.
\end{proof}

\begin{example}
\label{thm_metric_radical_annihilator}
The inner product $g$ is {\rannih} in both of its slots. This means that $g\in\annihforms{M}\odot_M\annihforms{M}$.
\end{example}
\begin{proof}
Follows directly from the definition of $\radix{TM}$ and of {\rannih} tensor fields.
\end{proof}

\begin{proposition}
\label{thm_radical_annihilator_vs_radical_contraction}
The contraction between a radical slot and a {\rannih} slot of a tensor is zero.
\end{proposition}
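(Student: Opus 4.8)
The plan is to reduce the claim to a pointwise statement and then to evaluate the contraction in a basis adapted to the radical, so that each term of the resulting trace is killed by one of the two preceding propositions.

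First I would observe that contraction is $\fiscal M$-linear and acts fiberwise, so it is enough to prove the statement on a single inner product space $(V,g)$. As in the proof of Proposition \ref{thm_radical_contravariant_index}, the permutation automorphisms of $\tensors r s V$ let me assume that the radical slot is the contravariant slot being contracted and the {\rannih} slot is the covariant slot being contracted; write $C$ for this contraction.

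Next I would choose a basis $(e_a)_{a=1}^n$ of $V$ adapted to the radical, so that $e_1,\dots,e_r$ span $\radix V$; writing $(e^{*a})_{a=1}^n$ for the dual basis, the covectors $e^{*a}$ with $a>r$ then span $\annih V$, since $\annih V$ is the annihilator of $\radix V$ (Remark \ref{thm_img_ker_radix}). The contraction can be written as the trace $C(T)=\sum_{a=1}^n \iota_{e^{*a}}\iota_{e_a}T$, where $\iota_{e^{*a}}$ feeds $e^{*a}$ into the contracted contravariant slot and $\iota_{e_a}$ feeds $e_a$ into the contracted covariant slot. I then split the sum according to $a\le r$ and $a>r$.

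Finally I would dispatch the two ranges termwise. For $a\le r$ we have $e_a\in\radix V$, so, because $T$ is {\rannih} in the contracted covariant slot, Proposition \ref{thm_radical_annihilator_covariant_index} gives $\iota_{e_a}T=0$. For $a>r$ we have $e^{*a}\in\annih V$, so, because $T$ is radical in the contracted contravariant slot, Proposition \ref{thm_radical_contravariant_index} gives $\iota_{e^{*a}}T=0$. Every term therefore vanishes and $C(T)=0$. I expect no genuine obstacle here: the only points requiring care are the bookkeeping of the permutation reduction and of which slot receives a vector and which a covector. The real content is simply that, by construction, $\annih V$ is the annihilator of $\radix V$, so the natural pairing of a radical vector with a {\rannih} covector is identically zero.
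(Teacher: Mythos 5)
Your proof is correct and is essentially the paper's own argument made explicit: both reduce pointwise to an inner product space $(V,g)$ and rest on the single fact that $\annih{V}$ is the annihilator of $\radix{V}$, the paper via a decomposition into simple tensors as in the proof of Proposition \ref{thm_radical_contravariant_index}, you via the equivalent adapted-basis trace formula. Your split of the trace into the ranges $a\le r$ and $a>r$, dispatched termwise by Propositions \ref{thm_radical_annihilator_covariant_index} and \ref{thm_radical_contravariant_index} respectively, is sound (and correctly uses both hypotheses, neither of which alone would suffice), merely filling in details the paper leaves as a sketch.
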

\begin{proof}
Follows from the Proposition \ref{thm_radical_contravariant_index} combined with the commutativity between tensor products and linear combinations with contraction. The proof goes similar to that of the Proposition \ref{thm_radical_contravariant_index}.
\end{proof}

\section{Covariant contraction of tensor fields}
\label{s_tensors_contraction_sign_const}

We don't need an inner product to define contractions between one covariant and one contravariant indices. We can use the inner product $g$ to contract between two contravariant indices, obtaining the \textit{contravariant contraction operator} $C^{kl}$ \cfeg{ONe83}{83}. On the other hand, the contraction is not always well defined for two covariant indices. We will see that we can use $\annihg$ for such contractions, but this works only for vectors or tensors which are {\rannih} in covariant slots. Fortunately, this kind of tensors turn out to be the relevant ones in the applications to singular {\semiriem} geometry.

\subsection{Covariant contraction on inner product spaces}
\label{s_tensors_covariant_contraction_inner_prod}

\begin{definition}
\label{def_contraction_covariant}
We can define uniquely the \textit{covariant contraction} or \textit{covariant trace} operator by the following steps.
\begin{enumerate}
	\item 
We define it first on tensors $T\in\annih{V}\otimes\annih{V}$, by $C_{12}T=\annihg^{ab}T_{ab}$. This definition is independent on the basis, because $\annihg\in\annih{V}^*\otimes\annih{V}^*$. 
	\item 
Let $T\in\tensors r s V$ be a tensor with $r\geq 0$ and $s\geq 2$, which satisfies $T\in V^{\otimes r}\otimes {V^*}^{\otimes {s-2}}\otimes\annih{V}\otimes\annih{V}$, that is, $T(\omega_1,\ldots,\omega_r,v_1,\ldots,v_s)=0$ for any $\omega_i\in V^*, i=1,\ldots,r$, $v_j\in V,j=1,\ldots,s$ whenever $v_{s-1}\in\radix{V}$ or $v_{s}\in\radix{V}$. Then, we define the covariant contraction between the last two covariant slots by the operator
\begin{equation*}
C_{s-1\,s}:=1_{\tensors r {s-2} V}\otimes C_{1,2}:\tensors r {s-2} V\otimes \annih{V}\otimes\annih{V}\to\tensors r {s-2} V,
\end{equation*}
where $1_{\tensors r {s-2} V}:\tensors r {s-2} V\to\tensors r {s-2}V$ is the identity.
In a radical basis, the contraction can be expressed by
\begin{equation*}
\label{eq_contraction_covariant_end}
(C_{s-1\,s} T)^{a_1\ldots a_r}{}_{b_1\ldots b_{s-2}} :=	\annihg^{b_{s-1} b_{s}}T^{a_1\ldots a_r}{}_{b_1\ldots \ldots b_{s-2}b_{s-1}b_{s}}.
\end{equation*}
	\item 
Let $T\in\tensors r s V$ be a tensor with $r\geq 0$ and $s\geq 2$, which satisfies
\begin{equation}
	T\in V^{\otimes r}\otimes {V^*}^{\otimes {k-1}}\otimes\annih{V}\otimes {V^*}^{\otimes l-k-1}\otimes\annih{V}\otimes {V^*}^{\otimes s-l},
\end{equation}
$1\leq k<l\leq s$, that is, $T(\omega_1,\ldots,\omega_r,v_1,\ldots,v_k,\ldots,v_l,\ldots,v_s)=0$ for any $\omega_i\in V^*, i=1,\ldots,r$, $v_j\in V,j=1,\ldots,s$ whenever $v_k\in\radix{V}$ or $v_l\in\radix{V}$. We define the contraction
\begin{equation*}
C_{kl}:V^{\otimes r}\otimes {V^*}^{\otimes {k-1}}\otimes\annih{V}\otimes {V^*}^{\otimes l-k-1}\otimes\annih{V}\otimes {V^*}^{\otimes s-l}
\to V^{\otimes r}\otimes {V^*}^{\otimes {s-2}},
\end{equation*}
by $C_{kl}:=C_{s-1\,s}\circ P_{k,s-1;l,s}$, where $C_{s-1\,s}$ is the contraction defined above, and $P_{k,s-1;l,s}:T\in\tensors r s V\to T\in\tensors r s V$ is the permutation isomorphisms which moves the $k$-th and $l$-th slots in the last two positions. In a basis, the components take the form
\begin{equation}
\label{eq_contraction_covariant_inner_prod_space}
(C_{kl} T)^{a_1\ldots a_r}{}_{b_1\ldots\widehat{b}_k\ldots\widehat{b}_l\ldots b_s} :=	\annihg^{b_k b_l}T^{a_1\ldots a_r}{}_{b_1\ldots b_k\ldots b_l\ldots b_s}.
\end{equation}\end{enumerate}
We denote the contraction $C_{kl} T$ of $T$ also by 
\begin{equation*}
C(T(\omega_1,\ldots,\omega_r,v_1,\ldots,\cocontr,\ldots,\cocontr,\ldots,v_s))
\end{equation*}
or simply
\begin{equation*}
T(\omega_1,\ldots,\omega_r,v_1,\ldots,\cocontr,\ldots,\cocontr,\ldots,v_s).
\end{equation*}
\end{definition}

\subsection{Covariant contraction on singular {\semiriem} manifolds}
\label{ss_tensors_contraction_manifolds}

In \sref{s_tensors_covariant_contraction_inner_prod} we have seen that we can contract in two covariant slots, so long as they are {\rannih}s. 
The covariant contraction uses the inner product $\annihg\in\annih{V}^*\odot\annih{V}^*$. 
In Section \sref{s_radix_annih_tensors} we have extended the notion of tensors which are {\rannih} in some slots to a singular {\semiriem} manifold $(M,g)$ by imposing the condition that the corresponding factors in the tensor product, at $p\in M$, are from $\annih{T}_p M$, which is just a subset of $T^*_p M$. This allows us easily to extend the covariant contraction in {\rannih} slots to singular {\semiriem} manifolds.

\begin{definition}
\label{def_contraction_covariant_ct_sign}
Let $T\in\tensors r s {M}$, $s\geq 2$, be a tensor field on $M$, which is {\rannih} in the $k$-th and $l$-th covariant slots, where $1\leq k<l\leq s$. The \textit{covariant contraction} or \textit{covariant trace} operator is the linear operator
\begin{equation*}
C_{kl}:\tensors r{k-1} {M}\otimes_M\annihforms M\otimes_M\tensors 0{l-k-1} {M}\otimes_M\annihforms M\otimes_M \tensors 0 {s-l}{M} \to \tensors{r}{s-2}{M}
\end{equation*}
by
\begin{equation*}
(C_{kl}T)(p)=C_{kl}(T(p))
\end{equation*}
in terms of the covariant contraction defined for inner product vector spaces, as in \sref{s_tensors_covariant_contraction_inner_prod}.
In local coordinates we have
\begin{equation}
\label{eq_contraction_covariant_ct_sign}
(C_{kl} T)^{a_1\ldots a_r}{}_{b_1\ldots\widehat{b}_k\ldots\widehat{b}_l\ldots b_s} :=	\annihg^{b_k b_l}T^{a_1\ldots a_r}{}_{b_1\ldots b_k\ldots b_l\ldots b_s}.
\end{equation}
We denote the contraction $C_{kl} T$ of $T$ also by 
\begin{equation*}
C(T(\omega_1,\ldots,\omega_r,X_1,\ldots,\cocontr,\ldots,\cocontr,\ldots,X_s))
\end{equation*}
or simply
\begin{equation*}
T(\omega_1,\ldots,\omega_r,X_1,\ldots,\cocontr,\ldots,\cocontr,\ldots,X_s).
\end{equation*}
\end{definition}

\begin{lemma}
\label{thm_contraction_with_metric}
If $T$ is a tensor field $T\in\tensors r s {M}$ with $r\geq 0$ and $s\geq 1$, which is {\rannih} in the $k$-th covariant slot, $1\leq k\leq s$, then its contraction with the metric tensor gives again $T$:
\begin{equation}
\label{eq_contraction_with_metric}
\begin{array}{l}
T(\omega_1,\ldots,\omega_r,X_1,\ldots,\cocontr,\ldots,X_s)\metric{X_k,\cocontr}\\
\,\,\,\,\,=T(\omega_1,\ldots,\omega_r,X_1,\ldots,X_k,\ldots,X_s)
\end{array}
\end{equation}
\end{lemma}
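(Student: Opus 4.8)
The plan is to reduce everything to a pointwise computation in a suitably adapted basis, since the covariant contraction of \eqref{eq_contraction_covariant_ct_sign} is defined pointwise and the asserted identity \eqref{eq_contraction_with_metric} is tensorial in character. So I would fix $p\in M$ and work entirely in the inner product space $(V,g):=(T_pM,g_p)$. The first observation is that the $1$-form $\metric{X_k,\cocontr}=g(X_k,\cdot)$ is nothing but $\annih{X_k}$, the index-lowering of $X_k$ from Definition~\ref{def_radical_annihilator}, and hence lies in the {\rannih} space. Combined with the hypothesis that $T$ is {\rannih} in its $k$-th slot and with Example~\ref{thm_metric_radical_annihilator} (which tells us $g$ is {\rannih} in both slots), this guarantees that the contraction appearing on the left of \eqref{eq_contraction_with_metric} is genuinely well-defined, i.e. that both contracted slots belong to $\annih{V}$ where $\annihg$ lives.

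Next I would choose a basis $(e_a)_{a=1}^n$ of $V$ diagonalizing $g$ with the radical directions first, so that $g_{ab}=\alpha_a\delta_{ab}$ with $\alpha_a=0$ for $a\leq r$ and $\alpha_a\neq 0$ for $a>r$, where $(r,s,t)$ is the signature of $g$. In this basis the components of $\metric{X_k,\cocontr}$ are $(\annih{X_k})_b=\alpha_b X_k^b$ (no summation), which vanish for $b\leq r$, while by Proposition~\ref{thm_radical_annihilator_covariant_index} the {\rannih} hypothesis on $T$ forces the components of $T$ carrying a radical index $b_k\leq r$ in the $k$-th slot to vanish. Using Proposition~\ref{thm_cometric_in_basis}, namely $\annihg^{b_k b}=\tfrac{1}{\alpha_{b_k}}\delta^{b_k b}$ on $\annih{V}$ (indices $>r$), the left-hand side of \eqref{eq_contraction_with_metric} becomes $\sum_{b_k>r}\tfrac{1}{\alpha_{b_k}}\,T^{\ldots}{}_{\ldots b_k\ldots}\,\alpha_{b_k}X_k^{b_k}=\sum_{b_k>r}T^{\ldots}{}_{\ldots b_k\ldots}X_k^{b_k}$; and since the $b_k\leq r$ terms of $T$ already vanish, this equals $T^{\ldots}{}_{\ldots b_k\ldots}X_k^{b_k}$, which is exactly the right-hand side, i.e. $T$ evaluated with $X_k$ inserted in its $k$-th slot.

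The only subtle step, and hence the main point to watch, is that $\annihg$ is defined solely on the {\rannih} space $\annih{V}$ (the indices $>r$): the factor $\alpha_{b_k}$ produced by lowering $X_k$ with $g$ and the factor $1/\alpha_{b_k}$ coming from $\annihg$ cancel precisely on those indices, whereas the potentially dangerous radical directions $b_k\leq r$ contribute nothing on either side exactly because $T$ is {\rannih} in the $k$-th slot. This cancellation is what makes the identity work, and it visibly requires both $g$ and $T$ to be {\rannih} in the contracted slot; everything else is bookkeeping. Basis independence needs no separate argument, since \eqref{eq_contraction_covariant_ct_sign} already defines an invariant operator, so verifying the equality in one adapted basis at each point suffices.
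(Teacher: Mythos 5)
Your proof is correct, but it takes a genuinely different route from the paper's. The paper argues invariantly: by multilinearity it reduces at once to the rank-one case $T=\omega\in\annih{V}$, where the identity $\omega(\cocontr)\metric{v,\cocontr}=\omega(v)$ is immediate from the very definition of $\annihg$ (writing $\omega=\annih{u}$, one gets $\annihprod{\omega,\annih{v}}=\metric{u,v}=\omega(v)$), with no basis ever chosen. You instead verify the identity pointwise in a basis diagonalizing $g$ with the radical directions first, invoking Proposition \ref{thm_cometric_in_basis} for $\annihg^{ab}=\frac{1}{\alpha_a}\delta^{ab}$ and Proposition \ref{thm_radical_annihilator_covariant_index} to kill the components of $T$ with a radical index in the $k$-th slot, so that the factors $\alpha_{b_k}$ and $1/\alpha_{b_k}$ cancel on the {\nondeg} directions. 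Both arguments are complete: you correctly note the well-definedness of the contraction (both contracted slots {\rannih}, using Example \ref{thm_metric_radical_annihilator} for $g$), you restrict the $\annihg$-sum to indices $>r$ where it is actually defined, and your appeal to the basis-independence of the contraction operator of Definition \ref{def_contraction_covariant_ct_sign} legitimately discharges the adapted-basis choice. What the paper's route buys is brevity and coordinate-freedom -- the lemma is exposed as an immediate consequence of the definition of $\annihprod{\cdot,\cdot}$; what your route buys is an explicit view of the mechanism, making visible exactly why the radical directions contribute nothing on either side and why the hypothesis that $T$ is {\rannih} in the contracted slot is indispensable.
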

\begin{proof}
For simplicity, we can work on an inner product space $(V,g)$. Let's first consider the case when $T\in\tensors 0 1 V$, in fact, $T=\omega\in\annih V$. Then, equation \eqref{eq_contraction_with_metric} reduces to
\begin{equation}
	\omega(\cocontr)\metric{v,\cocontr}=\omega(v).
\end{equation}
But since $\omega\in\annih V$, it takes the form $\omega=\annih{u}$ for $u\in V$, and $\omega(\cocontr)\metric{v,\cocontr}=\annihprod{\omega,\annih v}=\metric{u,v}=\annih u (v)=\omega(v)$.

The general case is obtained from the linearity of the tensor product in the $k$-th covariant slot.
\end{proof}

\begin{corollary}
\label{thm_contracted_metric_w_metric}
$\metric{X,\cocontr}\metric{Y,\cocontr}=\metric{X,Y}.$
\end{corollary}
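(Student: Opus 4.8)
The plan is to read this off as the special case of Lemma \ref{thm_contraction_with_metric} in which the tensor field $T$ is the metric $g$ itself. By Example \ref{thm_metric_radical_annihilator} we have $g\in\annihforms{M}\odot_M\annihforms{M}$, so $g$ is {\rannih} in both of its covariant slots. In particular the hypothesis of Lemma \ref{thm_contraction_with_metric} is satisfied by $T=g$, a tensor of type $(0,2)$ (so $r=0$, $s=2$), taking the contracted slot to be the second one, $k=2$.

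With these identifications, the left-hand side of \eqref{eq_contraction_with_metric} becomes $\metric{X_1,\cocontr}\metric{X_2,\cocontr}$ and the right-hand side becomes $\metric{X_1,X_2}$. Relabelling $X_1=X$ and $X_2=Y$ yields exactly the asserted identity $\metric{X,\cocontr}\metric{Y,\cocontr}=\metric{X,Y}$. So the corollary is immediate once the {\rannih} property of $g$ is in hand, and there is no genuine obstacle beyond invoking that property to license the application of the lemma.

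As an independent cross-check one can unpack the definitions directly: each factor $\metric{X,\cocontr}$ and $\metric{Y,\cocontr}$ is, by Definition \ref{def_inner_morphism}, a {\rannih} $1$-form, namely $\annih{X}$ and $\annih{Y}$. Their covariant contraction is the {\rannih} inner product evaluated on them, $\annihprod{\annih{X},\annih{Y}}$, which by Definition \ref{def_co_inner_product} (with $X,Y$ serving as representatives of $\annih{X},\annih{Y}$) equals $\metric{X,Y}$. This is precisely the base case already treated in the proof of Lemma \ref{thm_contraction_with_metric}, so the two routes agree.
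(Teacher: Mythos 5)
Your proposal is correct and follows exactly the paper's route: the paper's proof is the one-line observation that the corollary follows from Lemma \ref{thm_contraction_with_metric} together with $g\in\annihforms{M}\odot_M\annihforms{M}$, which is precisely your application of the lemma with $T=g$ and the {\rannih} property supplied by Example \ref{thm_metric_radical_annihilator}. Your definitional cross-check via $\annihprod{\annih{X},\annih{Y}}=\metric{X,Y}$ is a sound restatement of the base case inside the lemma's own proof, so nothing further is needed.
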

\begin{proof}
Follows from Lemma \ref{thm_contraction_with_metric} and from $g\in\annihforms{M}\odot_M\annihforms{M}$.
\end{proof}

\begin{example}
\label{thm_contracted_metric_w_itself}
$\metric{\cocontr,\cocontr}=\rank g.$
\end{example}
\begin{proof}
For simplicity, we can work on an inner product space $(V,g)$. We recall that $g\in\annih{V}\odot\annih{V}$, $\annihg\in\annih{V}^*\odot\annih{V}^*$. When restricted to $\annih{V}$ and $\annih{V}^*$ they are {\nondeg} and inverse to one another. Since $\dim\annih{V}=\dim\ker\flat=\rank g$, we obtain $\metric{\cocontr,\cocontr}=\rank g$.
\end{proof}

\begin{theorem}
\label{thm_contraction_orthogonal}
Let $(M,g)$ be a singular {\semiriem} manifold with constant signature. Let $T\in\tensors r s M$, $s\geq 2$, be a tensor field which is {\rannih} in the $k$-th and $l$-th covariant slots ($1\leq k<l \leq n$). Let $(E_a)_{a=1}^n$ be an orthogonal basis on $M$, so that $E_1,\ldots,E_{n-\rank g}\in\fivectnull{M}$. Then
\begin{equation}
\label{eq_cov_contraction_orthogonal}
\begin{array}{l}
T(\omega_1,\ldots,\omega_r,X_1,\ldots,\cocontr,\ldots,\cocontr,\ldots,X_s) \\
\,\,\,\,\,
=\sum_{a=n-\rank g+1}^n \dsfrac{1}{\metric{E_a,E_a}}T(\omega_1,\ldots,\omega_r,X_1,\ldots,E_a,\ldots,E_a,\ldots,X_s),
\end{array}
\end{equation}
for any $X_1,\ldots,X_s\in\fivect M,\omega_1,\ldots,\omega_r\in\fiformk 1 M$.
\end{theorem}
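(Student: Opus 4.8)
The plan is to reduce the statement to a pointwise computation in a single inner product space $(V,g)$ and then evaluate the abstract contraction $C_{kl}$ of Definition \ref{def_contraction_covariant_ct_sign} in the given orthogonal basis, where Proposition \ref{thm_cometric_in_basis} makes $\annihg$ completely explicit. Since the contraction operator and the evaluation on fixed arguments $\omega_i,X_j$ are both linear, and since the constant signature guarantees that $\fivectnull M$ is a genuine distribution and that $(E_a)$ is a bona fide orthogonal frame, it suffices to verify \eqref{eq_cov_contraction_orthogonal} fibrewise at an arbitrary $p\in M$, writing $e_a:=E_{ap}$ ordered so that $e_1,\ldots,e_{n-\rank g}$ span $\radix V$ while $e_{n-\rank g+1},\ldots,e_n$ are non-null.

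First I would dispose of the permutation bookkeeping: by the defining decomposition $C_{kl}=C_{s-1\,s}\circ P_{k,s-1;l,s}$ it is enough to contract the last two covariant slots, the general case following by relabelling. In the chosen basis the metric is diagonal, $g_{ab}=\metric{e_a,e_a}\delta_{ab}=\alpha_a\delta_{ab}$ with $\alpha_a=0$ exactly for $a\leq n-\rank g$. By Proposition \ref{thm_cometric_in_basis} the {\rannih} inner product then has components $\annihg^{ab}=\frac{1}{\alpha_a}\delta^{ab}$ for every non-null index $a$, and is defined only on the span of the $e^{*a}$ with $a>n-\rank g$, which is precisely $\annih V$.

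Next I would invoke the hypothesis that $T$ is {\rannih} in the $k$-th and $l$-th slots. By Proposition \ref{thm_radical_annihilator_covariant_index} the contraction of either slot with any radical vector vanishes; since $e_a\in\radix V$ for $a\leq n-\rank g$, the components $T^{a_1\ldots a_r}{}_{b_1\ldots b_k\ldots b_l\ldots b_s}$ vanish whenever $b_k\leq n-\rank g$ or $b_l\leq n-\rank g$. Hence in \eqref{eq_contraction_covariant_ct_sign} the only surviving terms have both $b_k$ and $b_l$ in the non-null range, exactly where $\annihg^{b_k b_l}=\frac{1}{\alpha_{b_k}}\delta^{b_k b_l}$ is legitimate, so the double sum collapses along the diagonal to
\begin{equation*}
\annihg^{b_k b_l}T^{a_1\ldots a_r}{}_{\ldots b_k\ldots b_l\ldots}=\sum_{a=n-\rank g+1}^n\frac{1}{\metric{e_a,e_a}}\,T^{a_1\ldots a_r}{}_{\ldots a\ldots a\ldots},
\end{equation*}
which is the right-hand side of \eqref{eq_cov_contraction_orthogonal} once the free slots are fed the arguments and the fibrewise identity is reassembled into a section over $M$.

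The step I expect to be the only genuine subtlety is the interface between the two descriptions of $\annihg$: the defining formula \eqref{eq_contraction_covariant_ct_sign} is phrased on the abstract space $\annih V$, whereas the sum I want is indexed by a frame of $V$ whose null members $e^{*a}$ ($a\leq n-\rank g$) do not lie in $\annih V$ at all. The point to check carefully is that the non-null covectors satisfy $e^{*a}=\annih{e_a}/\alpha_a\in\annih V$ and span $\annih V$, so that Proposition \ref{thm_cometric_in_basis} really does compute $\annihg$ on this frame, while the {\rannih} property of $T$ ensures that the potentially undefined quantities $\frac{1}{\alpha_a}$ with $\alpha_a=0$ are never summoned, their numerators being zero. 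Everything else is linearity together with the routine verification that the fibrewise equality glues to an identity of tensor fields on $M$ under the constant-signature assumption.
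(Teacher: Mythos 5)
Your proof is correct and follows essentially the same route as the paper's: reduce to a pointwise computation on an inner product space $(V,g)$, invoke Proposition \ref{thm_cometric_in_basis} for the diagonal form $\annihg^{aa}=1/g_{aa}$ on the non-null indices, and collapse the double sum to the diagonal over $a>n-\rank g$. You merely supply details the paper leaves implicit — the permutation bookkeeping via $C_{kl}=C_{s-1\,s}\circ P_{k,s-1;l,s}$, the vanishing of components with null indices via Proposition \ref{thm_radical_annihilator_covariant_index}, and the identification of $\{e^{*a}\}_{a>n-\rank g}$ as a basis of $\annih{V}$ on which Proposition \ref{thm_cometric_in_basis} legitimately computes $\annihg$ — which only makes the argument more complete.
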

\begin{proof}
For simplicity, we will work on an inner product space $(V,g)$.
From the Proposition \ref{thm_cometric_in_basis} we recall that $\annihg$ is diagonal and $\annihg^{aa}=\ds{\frac 1{g_{aa}}}$, for $a>n-\rank g$. Therefore
\begin{equation*}
\begin{array}{l}
\annihg^{ab}T(\omega_1,\ldots,\omega_r,v_1,\ldots,E_a,\ldots,E_b,\ldots,v_s) \\
\,\,\,\,\,
=\sum_{a=n-\rank g+1}^n \dsfrac{1}{\metric{E_a,E_a}}T(\omega_1,\ldots,\omega_r,v_1,\ldots,E_a,\ldots,E_a,\ldots,v_s).
\end{array}
\end{equation*}
\end{proof}

\begin{remark}
\label{rem_contraction_orthonormal_invariant}
Since in fact 
\begin{equation}
\label{eq_annihprod_orthogonal}
\annihprod{\omega_1,\omega_2}=\sum_{a=n-\rank g+1}^n \dsfrac{\omega_1(E_a)\omega_2(E_a)}{\metric{E_a,E_a}},
\end{equation}
for any {\rannih} $1$-forms $\omega_1,\omega_2\in\annihforms M$, it follows that if we define the contraction alternatively by the equation \eqref{eq_cov_contraction_orthogonal}, the definition is independent on the frame $(E_a)_{a=1}^n$.
\end{remark}

\begin{remark}
\label{rem_contraction_sign_change}
On regions of constant signature, the covariant contraction of a smooth tensor is smooth. But at the points where the signature changes, the contraction is not necessarily smooth, because the inverse of the metric becomes divergent at the points where the signature changes, as it follows from equation \eqref{thm_cometric_in_basis}. The fact that $\annihg_p\in(\annih{T}_pM)^*\odot(\annih{T}_pM)^*$ raises some problems, because the union of $(\annih{T}_pM)^*$ does not form a bundle, and for $\annihg$ the notions of continuity and smoothness don't even make sense.

\begin{counterexample}
The covariant contraction of the two indices of the metric tensor at a point $p\in M$ is $g_p(\cocontr,\cocontr)=\rank g(p)$ (see Example \ref{thm_contracted_metric_w_itself}). When $\rank g(p)$ is not constant, $g_p(\cocontr,\cocontr)$ is discontinuous.
\end{counterexample}

On the other hand, the following example shows that it is possible to have smooth contractions even when the signature changes:
\end{remark}

\begin{example}
\label{ex_contraction_sign_change_smooth}
If $X\in \fivect{M}$ and $\omega\in\annihforms{M}$, $C_{12}(\omega\otimes_M X^\flat)=\annihprod{\omega,X^\flat}=\omega(X)$ and it is smooth, even if the signature is variable.
\end{example}

\begin{remark}
\label{rem_contraction_sign_change_smooth}
Since the points where the signature doesn't change form a dense subset of $M$ (Remark \ref{rem_sign_var_points}), it makes sense to impose the condition of smoothness of the covariant contraction of a smooth tensor. To check smoothness, we simply check whether the extension by continuity of the contraction is smooth.
\end{remark}

\section{The Koszul form}
\label{s_koszul_form}

For convenience, we name \textit{Koszul form} the right member of the Koszul formula (see \eg \citep{ONe83}{61}):

\begin{definition}[The Koszul form, see \eg \citep{Kup87b}{263}]
\label{def_Koszul_form}
\textit{The Koszul form} is defined as
\begin{equation*}
	\kosz:\fivect M^3\to\R,
\end{equation*}
\begin{equation}
\label{eq_Koszul_form}
\begin{array}{llll}
	\kosz(X,Y,Z) &:=&\ds{\frac 1 2} \{ X \metric{Y,Z} + Y \metric{Z,X} - Z \metric{X,Y} \\
	&&\ - \metric{X,[Y,Z]} + \metric{Y, [Z,X]} + \metric{Z, [X,Y]}\}.
\end{array}
\end{equation}
\end{definition}

The Koszul formula becomes
\begin{equation}
\label{eq_koszul_formula}
	\metric{\derb X Y,Z} = \kosz(X,Y,Z),
\end{equation}
and for {\nondeg} metric, the unique Levi-Civita connection is obtained by raising the $1$-form $\kosz(X,Y,\_)$:
\begin{equation}
\label{eq_koszul_formula_inv}
	\derb X Y = \kosz(X,Y,\_)^\sharp.
\end{equation}
If the metric is degenerate, then this is not in general possible. We can raise $\kosz(X,Y,\_)$ on regions of constant signature, and what we obtain is what Kupeli (\citep{Kup87b}{261--262}) called \textit{Koszul derivative} -- which is in general not a connection and is not unique. Kupeli's construction is done only for singular {\semiriem} manifolds with metrics with constant signature, which satisfy the condition of \textit{radical-stationarity} (Definition \ref{def_radical_stationary_manifold}). But if the metric changes its signature, the Koszul derivative is discontinuous at the points where the signature changes. In this article we would not need to use the Koszul derivative, because for our purpose it will be enough to work with the Koszul form.

\subsection{Basic properties of the Koszul form}
\label{s_koszul_form_props}

Let's recall the Lie derivative of a tensor field $T\in\tensors 0 2 M$:

\begin{definition}\seepeg{HE95}{30}
\label{def_lie_derivative_metric}
Let $M$ be a differentiable manifold. Recall that the \textit{Lie derivative} of a tensor field $T\in\tensors 0 2 M$ with respect to a vector field $Z\in\fivect M$ is given by
\begin{equation}
	(\lie_Z T)(X,Y):=Z T(X,Y) - T([Z,X],Y) - T(X,[Z,Y])
\end{equation}
for any $X,Y\in\fivect M$.
\end{definition}

The following properties of the Koszul form correspond directly to standard properties of the Levi-Civita connection of a {\nondeg} metric \cfeg{ONe83}{61}. We prove them explicitly here, because in the case of degenerate metric the proofs need to avoid using the Levi-Civita connection and the index raising. These properties will turn out to be important for what it follows.

\begin{theorem}
\label{thm_Koszul_form_props}
The Koszul form of a singular {\semiriem} manifold $(M,g)$ has, for any $X,Y,Z\in\fivect M$ and $f\in\fiscal M$, the following properties:
\begin{enumerate}
	\item \label{thm_Koszul_form_props_linear}
	It is additive and $\R$-linear in each of its arguments.
	\item \label{thm_Koszul_form_props_flinearX}
	It is $\fiscal M$-linear in the first argument:

	$\kosz(fX,Y,Z) = f\kosz(X,Y,Z).$
	\item \label{thm_Koszul_form_props_flinearY}
	Satisfies the \textit{Leibniz rule}:

	$\kosz(X,fY,Z) = f\kosz(X,Y,Z) + X(f) \metric{Y,Z}.$
	\item \label{thm_Koszul_form_props_flinearZ}
	It is $\fiscal M$-linear in the third argument:

	$\kosz(X,Y,fZ) = f\kosz(X,Y,Z).$
	\item \label{thm_Koszul_form_props_commutYZ}
	It is \textit{metric}:

	$\kosz(X,Y,Z) + \kosz(X,Z,Y) = X \metric{Y,Z}$.
	\item \label{thm_Koszul_form_props_commutXY}
	It is \textit{symmetric} or \textit{torsionless}:

	$\kosz(X,Y,Z) - \kosz(Y,X,Z) = \metric{[X,Y],Z}$.
	\item \label{thm_Koszul_form_props_commutZX}
	Relation with the Lie derivative of $g$:

	$\kosz(X,Y,Z) + \kosz(Z,Y,X) = (\lie_Y g)(Z,X)$.
	\item \label{thm_Koszul_form_props_commutX2Y}

	$\kosz(X,Y,Z) + \kosz(Y,Z,X) = Y\metric{Z,X} + \metric{[X,Y],Z}$.
	\end{enumerate}
\end{theorem}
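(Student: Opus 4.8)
The plan is to derive all eight identities purely algebraically from the defining formula \eqref{eq_Koszul_form}, using nothing beyond the $\R$-bilinearity and symmetry of $g$, the $\fiscal M$-bilinearity $\metric{fX,Y}=f\metric{X,Y}$, the derivation property $X(fh)=X(f)h+fX(h)$ of a vector field acting on a product, and the two Lie-bracket identities: antisymmetry $[X,Y]=-[Y,X]$ and $[X,fY]=f[X,Y]+X(f)Y$. None of these ingredients refers to an inverse metric or to index raising, so the very same computation remains valid when $g$ is degenerate; this is exactly the reason for phrasing the statement in terms of the Koszul form rather than of a connection. Property \ref{thm_Koszul_form_props_linear} is then immediate, since each of the six terms of \eqref{eq_Koszul_form} is visibly $\R$-multilinear in $(X,Y,Z)$.

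For the $\fiscal M$-linearity items \ref{thm_Koszul_form_props_flinearX}, \ref{thm_Koszul_form_props_flinearY} and \ref{thm_Koszul_form_props_flinearZ}, I would substitute $f$ into the relevant slot and expand. The factor-$f$ part of every term reassembles into $f\kosz(X,Y,Z)$, so the content is to track the residual summands carrying a derivative of $f$; these arise from the terms of the form $W(f\metric{\cdot,\cdot})$ via the derivation rule, and from the bracket terms via $[X,fY]=f[X,Y]+X(f)Y$ (with the appropriate argument and sign in each case). I would carry out item \ref{thm_Koszul_form_props_flinearY}, the Leibniz rule, in full as the representative case: there the $X(f)$ summands, after using the symmetry of $g$, add up to $2X(f)\metric{Y,Z}$ and hence to $X(f)\metric{Y,Z}$ after the factor $\tfrac12$, while the $Z(f)$ summands cancel. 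For items \ref{thm_Koszul_form_props_flinearX} and \ref{thm_Koszul_form_props_flinearZ} the identical bookkeeping shows that all derivative-of-$f$ summands cancel in pairs, again by symmetry of $g$, leaving exactly $f\kosz(X,Y,Z)$.

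For the four symmetry relations \ref{thm_Koszul_form_props_commutYZ}--\ref{thm_Koszul_form_props_commutX2Y}, each is obtained by writing out $\kosz$ in the two prescribed argument orders and then adding or subtracting. The symmetry of $g$ collapses the directional-derivative terms $X\metric{Y,Z}$ in pairs, and the antisymmetry of the bracket does the same for the terms $\metric{\cdot,[\cdot,\cdot]}$. Concretely, in \ref{thm_Koszul_form_props_commutYZ} the directional terms combine to $2X\metric{Y,Z}$ and every bracket term cancels; in \ref{thm_Koszul_form_props_commutXY} the directional terms cancel while $\metric{Z,[X,Y]}$ doubles, giving $\metric{Z,[X,Y]}=\metric{[X,Y],Z}$; in \ref{thm_Koszul_form_props_commutZX} the surviving terms are $Y\metric{Z,X}-\metric{[Y,Z],X}-\metric{Z,[Y,X]}$, which is precisely $(\lie_Y g)(Z,X)$ by Definition \ref{def_lie_derivative_metric} once one rewrites $\metric{X,[Y,Z]}=\metric{[Y,Z],X}$ and $\metric{Z,[X,Y]}=-\metric{Z,[Y,X]}$; and \ref{thm_Koszul_form_props_commutX2Y} follows by the same cancellation, leaving $Y\metric{Z,X}+\metric{Z,[X,Y]}$ and then $\metric{Z,[X,Y]}=\metric{[X,Y],Z}$.

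No step presents a genuine difficulty; the nearest thing to an obstacle is the sign bookkeeping, namely expanding $[X,fY]$, $[fX,Y]$ and $[Y,fZ]$ with the correct sign in \ref{thm_Koszul_form_props_flinearX}--\ref{thm_Koszul_form_props_flinearZ}, and matching the six reordered terms against the symmetry and antisymmetry relations in \ref{thm_Koszul_form_props_commutYZ}--\ref{thm_Koszul_form_props_commutX2Y}. Since the entire argument is symbolic on the six explicit terms of \eqref{eq_Koszul_form}, the possible degeneracy of $g$ never enters, which is exactly what makes these identities available in the singular setting.
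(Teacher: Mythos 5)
Your proposal is correct and follows essentially the same route as the paper: direct expansion of the six terms of the defining formula \eqref{eq_Koszul_form}, using only the bilinearity and symmetry of $g$, the derivation property of vector fields, and the bracket identities $[X,fY]=f[X,Y]+X(f)Y$ and $[X,Y]=-[Y,X]$, so that the degeneracy of $g$ never enters. The only (inessential) deviation is item \eqref{thm_Koszul_form_props_commutX2Y}, which you verify by direct expansion whereas the paper obtains it by subtracting \eqref{thm_Koszul_form_props_commutXY} from \eqref{thm_Koszul_form_props_commutYZ} and permuting the arguments; both computations are valid and your cancellation bookkeeping checks out in every case.
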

\begin{proof}
\eqref{thm_Koszul_form_props_linear}\ 
Follows from Definition \ref{def_Koszul_form}, and from the linearity of $g$, of the action of vector fields on scalars, and of the Lie brackets.

\begin{equation*}
\begin{array}{llll}
\eqref{thm_Koszul_form_props_flinearX}\ 
&2\kosz(fX,Y,Z) &=& fX \metric{Y,Z} + Y \metric{Z,fX} - Z \metric{fX,Y} \\
	&&&- \metric{fX,[Y,Z]} + \metric{Y, [Z,fX]} + \metric{Z, [fX,Y]} \\
	&&=& fX \metric{Y,Z} + Y (f\metric{Z,X}) - Z (f\metric{X,Y}) \\
	&&&- f\metric{X,[Y,Z]}+ \metric{Y, f[Z,X] + Z(f)X} \\
	&&&+ \metric{Z, f[X,Y] - Y(f)X} \\
	&&=& fX \metric{Y,Z} + fY \metric{Z,X} \\
	&&&+ Y(f) \metric{Z,X} - fZ \metric{X,Y} \\
	&&&- Z(f)\metric{X,Y} - f\metric{X,[Y,Z]} + f\metric{Y, [Z,X]} \\
	&&&+ Z(f)\metric{Y, X} + f\metric{Z, [X,Y]} - Y(f)\metric{Z, X} \\
	&&=& fX \metric{Y,Z} + fY \metric{Z,X} - fZ \metric{X,Y} \\
	&&& - f\metric{X,[Y,Z]} + f\metric{Y,[Z,X]} + f\metric{Z,[X,Y]} \\
	&&=& 2f\kosz(X,Y,Z) \\
\end{array}
\end{equation*}

\begin{equation*}
\begin{array}{llll}
\eqref{thm_Koszul_form_props_flinearY}\ 
&2\kosz(X,fY,Z) &=& X \metric{fY,Z} + fY \metric{Z,X} - Z \metric{X,fY} \\
	&&&- \metric{X,[fY,Z]} + \metric{fY, [Z,X]} + \metric{Z, [X,fY]} \\
	&&=& X(f) \metric{Y,Z} + fX \metric{Y,Z} \\
	&&& + fY \metric{Z,X} - Z(f) \metric{X,Y} \\
	&&& -fZ \metric{X,Y}- f\metric{X,[Y,Z]} + Z(f)\metric{X,Y} \\
	&&& +f\metric{Y,[Z,X]} + f\metric{Z,[X,Y]} + X(f)\metric{Z,Y} \\
	&&=& f(X \metric{Y,Z} + Y \metric{Z,X}- Z \metric{X,Y} \\
	&&&- \metric{X,[Y,Z]} + \metric{Z,[X,Y]} + \metric{Y,[Z,X]}) \\
	&&&+ X(f) \(\metric{Y,Z}  + \metric{Z,Y}\) \\
	&&=& 2\(f\kosz(X,Y,Z) + X(f) \metric{Y,Z}\) \\
\end{array}
\end{equation*}

\begin{equation*}
\begin{array}{llll}
\eqref{thm_Koszul_form_props_flinearZ}\ 
&2\kosz(X,Y,fZ) &=& X \metric{Y,fZ} + Y \metric{fZ,X} - fZ \metric{X,Y} \\
	&&&- \metric{X,[Y,fZ]} + \metric{Y, [fZ,X]} + \metric{fZ, [X,Y]} \\
	&&=& fX \metric{Y,Z} + X(f) \metric{Y,Z} \\
	&&& + fY \metric{Z,X}+ Y (f)\metric{Z,X} \\
	&&&- fZ (\metric{X,Y})- f\metric{X,[Y,Z]} - Y(f)\metric{X,Z} \\
	&&&+ f\metric{Y,[Z,X]} - X(f)\metric{Y,Z} + f\metric{Z,[X,Y]} \\
	&&=& fX \metric{Y,Z} + fY \metric{Z,X} - fZ (\metric{X,Y}) \\
	&&&- f\metric{X,[Y,Z]}+ f\metric{Y,[Z,X]} + f\metric{Z,[X,Y]} \\
	&&=& 2f\kosz(X,Y,Z) \\
\end{array}
\end{equation*}

\begin{equation*}
\begin{array}{llll}
\eqref{thm_Koszul_form_props_commutYZ}\ 
&	2[\kosz(X,Y,Z) &+& \kosz(X,Z,Y)] \\
	&&=& X \metric{Y,Z} + Y \metric{Z,X} - Z \metric{X,Y} \\
	&&&- \metric{X,[Y,Z]} + \metric{Y,[Z,X]} + \metric{Z,[X,Y]} \\
	&&&+ X \metric{Z,Y} + Z \metric{Y,X} - Y \metric{X,Z} \\
	&&&- \metric{X,[Z,Y]} + \metric{Z,[Y,X]} + \metric{Y,[X,Z]} \\
	&&=& X \metric{Y,Z} - \metric{X,[Y,Z]} \\
	&&&+ \metric{Y,[Z,X]} + \metric{Z,[X,Y]} + X \metric{Y,Z}  \\
	&&&+ \metric{X,[Y,Z]} - \metric{Z,[X,Y]} - \metric{Y,[Z,X]} \\
	&&=& 2X \metric{Y,Z} \\
\end{array}
\end{equation*}

\begin{equation*}
\begin{array}{llll}
\eqref{thm_Koszul_form_props_commutXY}\ 
& 2[\kosz(X,Y,Z) &-& \kosz(Y,X,Z)] \\
	&&=& X \metric{Y,Z} + Y \metric{Z,X} - Z \metric{X,Y} \\
	&&&- \metric{X,[Y,Z]} + \metric{Y,[Z,X]} + \metric{Z,[X,Y]} \\
	&&&- Y \metric{X,Z} - X \metric{Z,Y} + Z \metric{Y,X} \\
	&&&+ \metric{Y,[X,Z]} - \metric{X,[Z,Y]} - \metric{Z,[Y,X]} \\
	&&=& X \metric{Y,Z} + Y \metric{Z,X} - Z \metric{X,Y} \\
	&&&- \metric{X,[Y,Z]} + \metric{Y,[Z,X]} + \metric{Z,[X,Y]} \\
	&&&- Y \metric{Z,X} - X \metric{Y,Z} + Z \metric{X,Y} \\
	&&&- \metric{Y,[Z,X]} + \metric{X,[Y,Z]} + \metric{Z,[X,Y]} \\
	&&=&2 \metric{Z,[X,Y]} = 2 \metric{[X,Y],Z} \\
\end{array}
\end{equation*}

\begin{equation*}
\begin{array}{llll}
\eqref{thm_Koszul_form_props_commutZX}\ 
& 2[\kosz(X,Y,Z) &+& \kosz(Z,Y,X)] \\
	&&=& X \metric{Y,Z} + Y \metric{Z,X} - Z \metric{X,Y} \\
	&&&- \metric{X,[Y,Z]} + \metric{Y,[Z,X]} + \metric{Z,[X,Y]} \\
	&&& +Z \metric{Y,X} + Y \metric{X,Z} - X \metric{Z,Y} \\
	&&&- \metric{Z,[Y,X]} + \metric{Y,[X,Z]} + \metric{X,[Z,Y]} \\
	&&=& X \metric{Y,Z} + Y \metric{Z,X} - Z \metric{X,Y} \\
	&&&- \metric{X,[Y,Z]} + \metric{Y,[Z,X]} + \metric{Z,[X,Y]} \\
	&&& +Z \metric{X,Y} + Y \metric{Z,X} - X \metric{Y,Z} \\
	&&&+ \metric{Z,[X,Y]} - \metric{Y,[Z,X]} - \metric{X,[Y,Z]} \\
	&&=& 2Y \metric{Z,X} - 2\metric{X,[Y,Z]} + 2\metric{Z,[X,Y]} \\
	&&=& 2(Y \metric{Z,X} - \metric{X,\lie_YZ} - \metric{Z,\lie_YX}) \\
	&&=& 2(\lie_Y g)(Z,X) \\
\end{array}
\end{equation*}

\eqref{thm_Koszul_form_props_commutX2Y} By subtracting \eqref{thm_Koszul_form_props_commutXY} from \eqref{thm_Koszul_form_props_commutYZ}, we obtain
$$\kosz(Y,X,Z) + \kosz(X,Z,Y) = X \metric{Y,Z} - \metric{[X,Y],Z}.$$
By applying the permutation $(X,Y,Z)\mapsto(Y,X,Z)$ we get
$$\kosz(X,Y,Z) + \kosz(Y,Z,X) = Y\metric{Z,X} + \metric{[X,Y],Z}.$$
\end{proof}

\begin{remark}
\label{thm_Koszul_form_index}
If $U\subseteq M$ is an open set in $M$ and $(E_a)_{a=1}^n\subset\fivect U$ are vector fields on $U$ forming a frame of $T_pU$ at each $p\in U$, then
\begin{equation}
\label{eq_Koszul_form_index}
\begin{array}{lll}
	\kosz_{abc}&:=&\kosz(E_a,E_b,E_c) \\
	&=&\ds{\frac 1 2} \{E_a(g_{bc}) + E_b(g_{ca}) - E_c(g_{ab})
	- g_{as} \ms C^s_{bc} + g_{bs} \ms C^s_{ca} + g_{cs} \ms C^s_{ab}\},
\end{array}
\end{equation}
where $g_{ab} = \metric{E_a,E_b}$ and $\ms C^c_{ab}$ are the coefficients of the Lie bracket of vector fields \seepeg{Das07}{107}, $[E_a,E_b] = \ms C_{ab}^c E_c$.

The equations (\ref{thm_Koszul_form_props_commutYZ} -- \ref{thm_Koszul_form_props_commutX2Y}) in Theorem \ref{thm_Koszul_form_props} become in the basis $(E_a)_{a=1}^n$:
\begin{equation*}
\begin{array}{ll}
	(\ref{thm_Koszul_form_props_commutYZ}')
	& \kosz_{abc} + \kosz_{acb} = E_a (g_{bc}). \\
	(\ref{thm_Koszul_form_props_commutZX}')
	& \kosz_{abc} + \kosz_{cba} = (\lie_{E_b} g)_{ca}. \\
	(\ref{thm_Koszul_form_props_commutXY}')
	& \kosz_{abc} - \kosz_{bac} = g_{sc}\ms C^s_{ab}. \\
	(\ref{thm_Koszul_form_props_commutX2Y}')
	& \kosz_{abc} + \kosz_{bca} = E_b(g_{ca}) + g_{sc}\ms C^s_{ab}. \\
\end{array}
\end{equation*}

If $E_a=\partial_a:=\ds{\frac {\partial}{\partial x^a}}$ for all $a\in\{1,\ldots,n\}$ are the partial derivatives in a coordinate system, $[\partial_a,\partial_b]=0$ and the equation \eqref{eq_Koszul_form_index} reduces to
\begin{equation}
\label{eq_Koszul_form_coord}
	\kosz_{abc}=\kosz(\partial_a,\partial_b,\partial_c)=\ds{\frac 1 2} (
	\partial_a g_{bc} + \partial_b g_{ca} - \partial_c g_{ab}),
\end{equation}
which are Christoffel's symbols of the first kind \cfeg{HE95}{40}.
\end{remark}

\begin{corollary}
\label{thm_Koszul_form}
Let $X,Y\in\fivect{M}$ two vector fields. The map $\kosz(X,Y,\_):\fivect{M}\to\R$ defined as
\begin{equation}
	\kosz(X,Y,\_)(Z) := \kosz(X,Y,Z)
\end{equation}
is a differential $1$-form.
\end{corollary}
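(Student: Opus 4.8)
The plan is to recognize that a differential $1$-form is exactly an $\fiscal M$-linear map $\fivect M\to\fiscal M$ (equivalently a $(0,1)$-tensor field), by the standard tensoriality lemma: a map that is $\fiscal M$-linear in its vector-field argument is pointwise-determined, hence defines a section of the cotangent bundle. So the whole task reduces to checking $\fiscal M$-linearity of $Z\mapsto\kosz(X,Y,Z)$, together with the remark that the output is genuinely a smooth function.

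First I would observe that the map sends smooth vector fields to smooth functions: every operation appearing in Definition \ref{def_Koszul_form} — the action of a vector field on the scalar $\metric{\cdot,\cdot}$, the metric pairing $g$, and the Lie bracket of vector fields — sends smooth arguments to smooth output, so $\kosz(X,Y,Z)\in\fiscal M$ whenever $X,Y,Z\in\fivect M$. Thus $\kosz(X,Y,\_)$ is at least a well-defined map $\fivect M\to\fiscal M$.

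Then I would simply invoke the two relevant properties already established in Theorem \ref{thm_Koszul_form_props}. Additivity in the third slot is contained in \eqref{thm_Koszul_form_props_linear}, giving $\kosz(X,Y,Z_1+Z_2)=\kosz(X,Y,Z_1)+\kosz(X,Y,Z_2)$; and homogeneity over $\fiscal M$ is precisely property \eqref{thm_Koszul_form_props_flinearZ}, namely $\kosz(X,Y,fZ)=f\,\kosz(X,Y,Z)$ for every $f\in\fiscal M$. Combining the two yields $\fiscal M$-linearity of $\kosz(X,Y,\_)$, and the tensoriality lemma then promotes this functional to a differential $1$-form.

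There is no genuine obstacle here: the statement is an immediate corollary of Theorem \ref{thm_Koszul_form_props}. The one point worth stressing — and the only place the argument could slip — is that mere $\R$-linearity (property \eqref{thm_Koszul_form_props_linear} alone) is \emph{not} enough to conclude that one has a $1$-form; what does the essential work is the $\fiscal M$-homogeneity in the third argument, property \eqref{thm_Koszul_form_props_flinearZ}, since it is exactly tensoriality in $Z$ that makes the value at a point depend only on $Z$ at that point.
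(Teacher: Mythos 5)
Your proposal is correct and follows essentially the same route as the paper, whose proof simply cites properties \eqref{thm_Koszul_form_props_linear} and \eqref{thm_Koszul_form_props_flinearZ} of Theorem \ref{thm_Koszul_form_props}; you merely make explicit the tensoriality lemma and the smoothness check that the paper leaves implicit. Your closing emphasis that $\fiscal M$-homogeneity (not mere $\R$-linearity) is what does the real work is exactly the right observation.
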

\begin{proof}
It is a direct consequence of Theorem \ref{thm_Koszul_form_props}, properties \eqref{thm_Koszul_form_props_linear} and \eqref{thm_Koszul_form_props_flinearZ}.
\end{proof}

\begin{corollary}
\label{thm_Koszul_null_props}
If $X,Y\in \fivect M$ and $W\in\fivectnull{M}$, then
\begin{equation}
	\kosz(X,Y,W) = \kosz(Y,X,W) = -\kosz(X,W,Y) = -\kosz(Y,W,X). \\
\end{equation}
\end{corollary}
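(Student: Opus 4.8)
The plan is to derive all four equalities from just two of the structural identities for the Koszul form established in Theorem \ref{thm_Koszul_form_props}, exploiting the single fact that a radical vector field pairs trivially with every vector field under the metric. Concretely, since $W\in\fivectnull M$ means $W_p\in\radix{(T_pM)}$ at every point $p\in M$, we have $\metric{W,V}=0$ identically for all $V\in\fivect M$; in particular the functions $\metric{X,W}$, $\metric{Y,W}$ and $\metric{[X,Y],W}$ are the zero function on $M$, and hence so are their derivatives along any vector field.

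First I would apply the torsionless property \eqref{thm_Koszul_form_props_commutXY}, namely $\kosz(X,Y,W)-\kosz(Y,X,W)=\metric{[X,Y],W}$. The right-hand side vanishes because $W$ is radical, which yields the first equality $\kosz(X,Y,W)=\kosz(Y,X,W)$.

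Next I would invoke the metric property \eqref{thm_Koszul_form_props_commutYZ} twice, in the form $\kosz(A,B,C)+\kosz(A,C,B)=A\metric{B,C}$. Taking $(A,B,C)=(X,Y,W)$ gives $\kosz(X,Y,W)+\kosz(X,W,Y)=X\metric{Y,W}=0$, hence $\kosz(X,Y,W)=-\kosz(X,W,Y)$; taking $(A,B,C)=(Y,X,W)$ gives $\kosz(Y,X,W)+\kosz(Y,W,X)=Y\metric{X,W}=0$, hence $\kosz(Y,X,W)=-\kosz(Y,W,X)$. Chaining these two relations with the first equality produces the full string $\kosz(X,Y,W)=\kosz(Y,X,W)=-\kosz(X,W,Y)=-\kosz(Y,W,X)$.

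There is no genuine obstacle here beyond bookkeeping; the only point requiring care is to observe that $\metric{Y,W}$ and $\metric{X,W}$ are identically zero as functions on $M$ (not merely zero at a single point), so that the directional derivatives $X\metric{Y,W}$ and $Y\metric{X,W}$ appearing on the right-hand sides of \eqref{thm_Koszul_form_props_commutYZ} indeed vanish. This is immediate from the definition of $\fivectnull M$ together with the symmetry of $g$, and it is exactly the feature that makes the radical directions behave so rigidly under the Koszul form.
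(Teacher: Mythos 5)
Your proposal is correct and matches the paper's own proof exactly: both derive the first equality from the torsionless property \eqref{thm_Koszul_form_props_commutXY} using $\metric{[X,Y],W}=0$, and the remaining two from the metric property \eqref{thm_Koszul_form_props_commutYZ} applied with $(X,Y,W)$ and $(Y,X,W)$, where $X\metric{Y,W}=Y\metric{X,W}=0$ since $W$ is radical. Your added remark that these inner products vanish identically as functions on $M$ (so their directional derivatives vanish) is the right point of care, and is implicit in the paper's argument.
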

\begin{proof}
From Theorem \ref{thm_Koszul_form_props}, property \eqref{thm_Koszul_form_props_commutXY},
\begin{equation}
	\kosz(X,Y,W) = \kosz(Y,X,W) + \metric{[X,Y],W} = \kosz(Y,X,W).
\end{equation}
From Theorem \ref{thm_Koszul_form_props}, property \eqref{thm_Koszul_form_props_commutYZ},
\begin{equation}
\kosz(X,Y,W) = -\kosz(X,W,Y) + X\metric{Y,W}= -\kosz(X,W,Y)
\end{equation}
and
\begin{equation}
\kosz(Y,X,W) = -\kosz(Y,W,X).
\end{equation}
\end{proof}

\section{The covariant derivative}
\label{s_cov_der}

\subsection{The lower covariant derivative of vector fields}
\label{s_l_cov_dev}

\begin{definition}[The lower covariant derivative]
\label{def_l_cov_der}
The \textit{lower covariant derivative} of a vector field $Y$ in the direction of a vector field $X$ is the differential $1$-form $\lderb XY \in \fiformk 1{M}$ defined as
\begin{equation}
\label{eq_l_cov_der_vect}
\lderc XYZ := \kosz(X,Y,Z)
\end{equation}
for any $Z\in\fivect{M}$.
The \textit{lower covariant derivative operator} is the operator
\begin{equation}
	\lder:\fivect{M} \times \fivect{M} \to \fiformk 1{M}
\end{equation}
which associates to each $X,Y\in\fivect{M}$ the differential $1$-form $\ldera XY$.
\end{definition}

\begin{remark}
Unlike the case of the covariant derivative defined when the metric is {\nondeg}, the result of applying the lower covariant derivative to a vector field is not another vector field, but a differential $1$-form. When the metric is {\nondeg} the two are equivalent by changing the type of the $1$-form $\lderb XY$ into a vector field $\derb XY=(\lderb XY)^\sharp$. Similar objects mapping vector fields to $1$-forms were used in \eg \citep{Koss85}{464--465}. The lower covariant derivative doesn't require a {\nondeg} metric, and it will be very useful in what follows.
\end{remark}

The following properties correspond to standard properties of the Levi-Civita connection of a {\nondeg} metric \cfeg{ONe83}{61}, and are extended here to the case when the metric can be degenerate.

\begin{theorem}
\label{thm_l_cov_der_props}
The lower covariant derivative operator $\lder$ of vector fields defined on a singular {\semiriem} manifold $(M,g)$ has the following properties:
\begin{enumerate}
	\item \label{thm_l_cov_der_props_linear}
	It is additive and $\R$-linear in both of its arguments.
	\item \label{thm_l_cov_der_props_flinearX}
	It is $\fiscal M$-linear in the first argument:

	$\lderb{fX}Y = f\lderb XY.$
	\item \label{thm_l_cov_der_props_flinearY}
	Satisfies the \textit{Leibniz rule}:

	$\lderb X{fY} = f\lderb XY + X(f) Y^\flat.$
	
	or, explicitly,

	$\lderc X{fY}Z = f\lderc XYZ + X(f) \metric{Y,Z}.$
	\item \label{thm_l_cov_der_props_flinearZ}
	It is \textit{metric}:

	$\lderc XYZ + \lderc XZY = X \metric{Y,Z}$.
	\item \label{thm_l_cov_der_props_commutXY}
	It is \textit{symmetric} or \textit{torsionless}:

	$\lderb XY - \lderb YX = [X,Y]^\flat$
	
	or, explicitly,
	
	$\lderc XYZ - \lderc YXZ = \metric{[X,Y],Z}$.
	\item \label{thm_l_cov_der_props_commutZX}
	Relation with the Lie derivative of $g$:

	$\lderc XYZ + \lderc ZYX = (\lie_Y g)(Z,X)$.
	\item \label{thm_l_cov_der_props_commutX2Y}

	$\lderc XYZ + \lderc YZX = Y\metric{Z,X} + \metric{[X,Y],Z}$.
	\end{enumerate}
	for any $X,Y,Z\in\fivect M$ and $f\in\fiscal M$.
\end{theorem}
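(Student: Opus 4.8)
The plan is to observe that every item of the theorem is the image, under the definition $\lderc XYZ := \kosz(X,Y,Z)$, of the corresponding property of the Koszul form already established in Theorem \ref{thm_Koszul_form_props}. Since Corollary \ref{thm_Koszul_form} guarantees that $\kosz(X,Y,\_)$ is a differential $1$-form, the object $\lderb XY \in \fiformk 1{M}$ is well-defined, and it suffices to verify each identity by evaluating both sides against an arbitrary $Z\in\fivect M$ and invoking the matching Koszul identity. So the entire argument is a dictionary translation rather than a fresh computation, which is by design: Theorem \ref{thm_Koszul_form_props} was proved precisely to avoid the inverse metric and index raising, so its consequences survive the degeneracy of $g$.

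Concretely, I would match the properties one by one. Property \eqref{thm_l_cov_der_props_linear} is \eqref{thm_Koszul_form_props_linear}; property \eqref{thm_l_cov_der_props_flinearX} is \eqref{thm_Koszul_form_props_flinearX}; property \eqref{thm_l_cov_der_props_flinearZ} (the metric property) is \eqref{thm_Koszul_form_props_commutYZ}; property \eqref{thm_l_cov_der_props_commutZX} is \eqref{thm_Koszul_form_props_commutZX}; and property \eqref{thm_l_cov_der_props_commutX2Y} is \eqref{thm_Koszul_form_props_commutX2Y}. In each of these the right-hand side is already expressed purely in terms of $g$ and the action of vector fields on scalars, so no reformulation is needed beyond reading $\lderc XYZ$ as $\kosz(X,Y,Z)$.

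The only items demanding a small extra remark are the Leibniz rule \eqref{thm_l_cov_der_props_flinearY} and the torsion-free identity \eqref{thm_l_cov_der_props_commutXY}, each stated both in compact $1$-form notation and in explicit scalar form. Here I would first establish the scalar versions, $\lderc X{fY}Z = f\lderc XYZ + X(f)\metric{Y,Z}$ from \eqref{thm_Koszul_form_props_flinearY}, and $\lderc XYZ - \lderc YXZ = \metric{[X,Y],Z}$ from \eqref{thm_Koszul_form_props_commutXY}, and then upgrade them to the $1$-form equalities $\lderb X{fY} = f\lderb XY + X(f)Y^\flat$ and $\lderb XY - \lderb YX = [X,Y]^\flat$. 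This upgrade uses only Definition \ref{def_inner_morphism}, namely $W^\flat(Z)=\metric{W,Z}$, to recognize the inner-product term on the right as the evaluation at $Z$ of the $1$-form $X(f)Y^\flat$ (respectively $[X,Y]^\flat$); the scalar equality holding for all $Z$ is then equivalent to the stated $1$-form equality.

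There is no genuine obstacle in this proof. The single point worth stating carefully is that each identity is an equality of $1$-forms in the slot occupied by $Z$, which is legitimate because, by \eqref{thm_Koszul_form_props_linear} together with \eqref{thm_Koszul_form_props_flinearZ}, both sides are additive and $\fiscal M$-linear in $Z$, hence genuinely tensorial in that argument. Once this is noted, the seven properties follow immediately from Theorem \ref{thm_Koszul_form_props} and Corollary \ref{thm_Koszul_form}.
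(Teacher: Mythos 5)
Your proof is correct and is essentially the paper's own argument: the paper proves this theorem with the single line ``Follows from the direct application of Theorem \ref{thm_Koszul_form_props},'' and your property-by-property dictionary (including the upgrade of the scalar identities to $1$-form equalities via Corollary \ref{thm_Koszul_form} and $W^\flat(Z)=\metric{W,Z}$) just makes that same translation explicit.
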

\begin{proof}
Follows from the direct application of Theorem \ref{thm_Koszul_form_props}.
\end{proof}

\subsection{{\rrstationary} singular {\semiriem} manifolds}
\label{s_radical_stationary_manifolds}

The {\rstationary} singular {\semiriem} manifolds of constant signature were introduced by Kupeli in \citep{Kup87b}{259--260}, where he called them singular {\semiriem} manifolds. Later, in \cite{Kup96} Definition 3.1.3, he named them ``stationary singular {\semiriem} manifolds''. Here we use the term ``{\rstationary} singular {\semiriem} manifolds'' to avoid possible confusion, since the word ``stationary'' is used in general for manifolds admitting a Killing vector field, and in particular for spacetimes invariant at time translation. Kupeli introduced them to ensure the existence of the Koszul derivative. Our need is different, since we don't rely on Kupeli's Koszul derivative.

\begin{definition}[\cf \cite{Kup96} Definition 3.1.3]
\label{def_radical_stationary_manifold}
A singular {\semiriem} manifold $(M,g)$ is \textit{{\rstationary}} if it satisfies the condition 
\begin{equation}
\label{eq_radical_stationary_manifold}
		\kosz(X,Y,\_)\in\annihforms M,
\end{equation}
for any $X,Y\in\fivect{M}$.
\end{definition}

\begin{remark}
The condition from Definition \ref{def_radical_stationary_manifold} means that $\kosz(X,Y,W_p)=0$ for any $X,Y\in \fivect M$ and $W_p\in \fivectnull{M_p}$, $p\in M$.
\end{remark}

\begin{corollary}
\label{thm_Koszul_null_props_rad_stat}
If $(M,g)$ is {\rstationary} and $X,Y\in \fivect M$ and $W\in\fivectnull{M}$, then
\begin{equation}
	\kosz(X,Y,W) = \kosz(Y,X,W) = -\kosz(X,W,Y) = -\kosz(Y,W,X) = 0. \\
\end{equation}
\end{corollary}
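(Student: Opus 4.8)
The plan is to combine the unconditional identities from Corollary \ref{thm_Koszul_null_props} with the defining property of radical-stationarity. First I would recall that Corollary \ref{thm_Koszul_null_props}, which holds on any singular {\semiriem} manifold with no extra hypotheses, already supplies the chain of equalities
\[
\kosz(X,Y,W) = \kosz(Y,X,W) = -\kosz(X,W,Y) = -\kosz(Y,W,X)
\]
for $X,Y\in\fivect M$ and $W\in\fivectnull M$. Thus it suffices to show that a single one of these four quantities vanishes; the remaining three then follow automatically from the equalities.

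Next I would invoke the radical-stationary hypothesis. By Definition \ref{def_radical_stationary_manifold}, for any $X,Y\in\fivect M$ the Koszul form $\kosz(X,Y,\_)$ lies in $\annihforms M$, that is, it is a {\rannih} $1$-form. As the Remark following that definition records, this is precisely the statement that $\kosz(X,Y,W_p)=0$ whenever $W_p$ is a radical tangent vector. Applying this to the radical vector field $W\in\fivectnull M$ gives $\kosz(X,Y,W)=0$.

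Combining the two observations closes the argument: the equalities of Corollary \ref{thm_Koszul_null_props} propagate the vanishing of $\kosz(X,Y,W)$ to all four expressions, yielding
\[
\kosz(X,Y,W) = \kosz(Y,X,W) = -\kosz(X,W,Y) = -\kosz(Y,W,X) = 0.
\]
There is no genuine obstacle here: the content is entirely carried by Corollary \ref{thm_Koszul_null_props} (whose proof rests on the metric and torsionless properties of the Koszul form together with $\metric{Y,W}=0$ for radical $W$) and by the definition of radical-stationarity, so this corollary is really just the conjunction of those two facts. The only point worth a moment's care is that radical-stationarity constrains directly only the \emph{third} slot of $\kosz$, so the two terms carrying $W$ in the second slot cannot be handled by a direct appeal to the hypothesis and must instead be reduced to the third-slot case via Corollary \ref{thm_Koszul_null_props}.
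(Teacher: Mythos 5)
Your proposal is correct and follows the paper's own route exactly: the paper's proof is precisely ``follows from Corollary \ref{thm_Koszul_null_props}'' combined with the radical-stationary condition (as spelled out in the remark after Definition \ref{def_radical_stationary_manifold}), which forces $\kosz(X,Y,W)=0$ and propagates through the chain of equalities. Your closing observation that the hypothesis constrains only the third slot, so the second-slot terms must be reduced via Corollary \ref{thm_Koszul_null_props}, is a fair articulation of why the unconditional chain is needed, and matches the paper's logic.
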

\begin{proof}
Follows directly from the Corollary \ref{thm_Koszul_null_props}.
\end{proof}

\begin{remark}
\label{rem_rad_stat_lower_der}
The condition \eqref{eq_radical_stationary_manifold} can be expressed in terms of the lower derivative as
\begin{equation}
		\lderb X Y\in\annihforms M,
\end{equation}
for any $X,Y\in\fivect{M}$.
\end{remark}

\subsection{The covariant derivative of differential 1-forms}
\label{s_cov_der_covect}

For {\nondeg} metrics the covariant derivative of a differential $1$-form is defined in terms of $\der_XY$ \cfeg{GHLF04}{70} by
\begin{equation}
	\left(\der_X\omega\right)(Y) = X\left(\omega(Y)\right) - \omega\left(\der_X Y\right).
\end{equation}
In order to generalize this formula to the case of degenerate metrics, we need to express $\omega\left(\der_XY\right)$ in terms of $\lderb XY$. We can use the identity
\begin{equation}
\label{eq_cov_der_form_raise}
	\omega\left(\der_XY\right) = \metric{\der_XY,\omega^\sharp}
\end{equation}
and rewrite it in a way compatible to the degenerate case as
\begin{equation}
\tag{\ref{eq_cov_der_form_raise}'}
	\omega\left(\der_XY\right) = \metric{\der_XY,\cocontr}\metric{\omega^\sharp,\cocontr}
\end{equation}

\begin{remark}
If the metric is degenerate, we need to be allowed to define the contraction $\kosz(X,Y,\cocontr)\omega(\cocontr)$. This is possible on {\rstationary} singular {\semiriem} manifolds -- since $\lderb XY$ is {\rannih} -- if the differential form $\omega$ is {\rannih} too.
\end{remark}

We can therefore give the following definition:

\begin{definition}
\label{def_cov_der_covect}
Let $(M,g)$ be a {\rstationary} {\semiriem} manifold. We define the covariant derivative of a {\rannih} $1$-form $\omega\in\annihforms{M}$ in the direction of a vector field $X\in\fivect{M}$ by
\begin{equation}
	\der:\fivect{M} \times \annihforms{M} \to \discformsk 1 M
\end{equation}
\begin{equation}
	\left(\der_X\omega\right)(Y) := X\left(\omega(Y)\right) - \annihprod{\lderb X Y,\omega},
\end{equation}
where $\discformsk 1 M$ is the set of sections of $T^*M$ smooth at the points of $M$ where the signature is constant.
\end{definition}

\begin{proposition}
\label{thm_cov_deriv_annih}
If $(M,g)$ is {\rstationary} and $\omega\in\annihforms{M}$ is a {\rannih} $1$-form, then for any $X\in\fivect M$ and $p\in M - \mansigvar M$, $\der_{X_p}\omega_p\in\annih{T_p}M$.
\end{proposition}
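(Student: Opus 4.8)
The plan is to use the characterization of the {\rannih} space $\annih{(T_pM)}$ as the annihilator of the radical $\radix{(T_pM)}$ (Remark \ref{thm_img_ker_radix} applied to $T_pM$, compare Proposition \ref{thm_radical_annihilator_covariant_index}): since $\der_X\omega$ is a section of $T^*M$, it suffices to show that $(\der_X\omega)_p(W_p) = 0$ for every radical vector $W_p \in \radix{(T_pM)}$. First I would fix such a $W_p$. Because $p \in M - \mansigvar M$, by Remark \ref{rem_sign_var_points} the signature of $g$ is constant on some neighborhood $U$ of $p$, so $\radix{T}M$ is a smooth distribution over $U$ and $W_p$ extends to a radical vector field $W \in \fivectnull{U}$. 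Since $\der_X\omega$ is $\fiscal M$-linear in its argument (this follows from the Leibniz rule of Theorem \ref{thm_l_cov_der_props}, property \eqref{thm_l_cov_der_props_flinearY}, together with $\annihprod{Y^\flat,\omega} = \omega(Y)$ from Lemma \ref{thm_contraction_with_metric}), the value $(\der_X\omega)_p(W_p)$ depends only on $W_p$ and equals the function $(\der_X\omega)(W)$ evaluated at $p$, independently of the chosen extension.

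Next I would compute $(\der_X\omega)(W)$ directly from Definition \ref{def_cov_der_covect}:
\begin{equation*}
(\der_X\omega)(W) = X\bigl(\omega(W)\bigr) - \annihprod{\lderb X W, \omega}.
\end{equation*}
The first term vanishes because $\omega$ is {\rannih} and $W$ is radical, so $\omega(W) \equiv 0$ on $U$ and hence $X(\omega(W)) = 0$. For the second term I would invoke {\rstationary}ity: by Corollary \ref{thm_Koszul_null_props_rad_stat}, for $W \in \fivectnull{U}$ one has $\kosz(X,W,Z) = 0$ for every $Z \in \fivect U$, which is exactly the assertion that the $1$-form $\lderb X W = \kosz(X,W,\_)$ vanishes identically; therefore $\annihprod{\lderb X W, \omega} = \annihprod{0,\omega} = 0$. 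With both terms zero, $(\der_X\omega)_p(W_p) = 0$, and as $W_p$ ranges over all of $\radix{(T_pM)}$ this yields $(\der_X\omega)_p \in \annih{(T_pM)}$.

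The computation itself is short; the real content lies in the hypothesis $p \in M - \mansigvar M$, which I expect to be the main (if mild) obstacle to state carefully. It is needed precisely so that an individual radical vector $W_p$ can be realized as the value at $p$ of a genuine radical vector field $W$ on a neighborhood, since both the Koszul form and the lower covariant derivative are operators on vector fields and Corollary \ref{thm_Koszul_null_props_rad_stat} applies to radical vector fields rather than to isolated radical vectors. Where the signature jumps, $\radix{T}M$ fails to be a distribution and no such smooth extension need exist; this is the same reason the target of $\der$ in Definition \ref{def_cov_der_covect} is only $\discformsk 1 M$, the forms smooth off $\mansigvar M$. Thus the crux is the bookkeeping of the extension argument and the recognition that {\rstationary}ity is exactly what forces $\lderb X W$ to vanish on radical $W$.
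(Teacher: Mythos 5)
Your proof is correct and takes essentially the same route as the paper's: extend the radical vector $W_p$ to a field $W\in\fivectnull U$ on a constant-signature neighborhood $U$ of $p$ (which exists precisely because $p\in M-\mansigvar M$) and compute $\left(\der_X\omega\right)(W) = X\left(\omega(W)\right) - \annihprod{\lderb X W,\omega} = 0$. The additional justifications you supply --- the $\fiscal M$-linearity of $\der_X\omega$ in its argument making the value at $p$ independent of the extension, and the vanishing $\lderb X W = 0$ via Corollary \ref{thm_Koszul_null_props_rad_stat} --- are details the paper's terse proof leaves implicit, correctly filled in.
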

\begin{proof}
It follows from the Definition \ref{def_cov_der_covect}. Let $U$ be a neighborhood of $p$ where $g$ has constant signature, and let $W\in\fivectnull U$ so that $W_p\in\radix{T_p}M$. Then, on $U$, $\left(\der_X\omega\right)(W) = X\left(\omega(W)\right) - \annihprod{\lderb X W,\omega} = 0$.
\end{proof}

\begin{corollary}
\label{thm_cov_deriv_annih_smooth}
If $\der_X\omega$ is smooth, then it is a {\rannih} differential $1$-form, $\der_X\omega\in\annihforms M$.
\end{corollary}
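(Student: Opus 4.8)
The plan is to verify directly the pointwise condition defining membership in $\annihforms M$, namely that $(\der_X\omega)_p$ annihilates $\radix{T_p}M$ for every $p\in M$. By Proposition \ref{thm_radical_annihilator_covariant_index}, for a $1$-form this is equivalent to the vanishing of its contraction with every radical vector, so it suffices to show $(\der_X\omega)_p(w)=0$ for all $w\in\radix{T_p}M$. Proposition \ref{thm_cov_deriv_annih} already supplies exactly this at every point of $M-\mansigvar M$, the open set on which the signature is locally constant, and by Remark \ref{rem_sign_var_points} this set is dense in $M$. The whole difficulty is therefore concentrated on the signature-change locus $\mansigvar M$.

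First I would observe that for a genuine radical vector field $W\in\fivectnull M$ the conclusion is immediate everywhere, with no appeal to density: the function $\omega(W)$ vanishes identically since $\omega$ is {\rannih} and $W$ is radical, and $\lderb X W=\kosz(X,W,\_)=0$ by radical-stationarity (Corollary \ref{thm_Koszul_null_props_rad_stat}), so the defining identity of Definition \ref{def_cov_der_covect} gives $(\der_X\omega)(W)=X(\omega(W))-\annihprod{\lderb X W,\omega}=0$. This already disposes of every $w\in\radix{T_p}M$ that can be realised as the value $W_p$ of a radical vector field, and in particular of all of $\radix{T_p}M$ when $p\notin\mansigvar M$, where $\radix{T}M$ is a bundle.

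The main obstacle, and the only real content of the statement, is a point $p\in\mansigvar M$ together with a radical vector $w\in\radix{T_p}M$ lying in a ``new'' degenerate direction that appears only at $p$: there the radical is strictly larger than at neighbouring points, so $\radix{T}M$ is not a bundle and no radical vector field takes the value $w$ at $p$, which is precisely why the smoothness hypothesis must now be used. To treat it I would extend $w$ to an arbitrary smooth field $W$ with $W_p=w$ and use that $W^\flat=\metric{W,\cocontr}$ is {\rannih} and, because $w$ is radical, satisfies $W^\flat_p=0$. On the dense set $M-\mansigvar M$ the form $\der_X\omega$ is {\rannih} (Proposition \ref{thm_cov_deriv_annih}), so Lemma \ref{thm_contraction_with_metric} lets me rewrite the smooth function $(\der_X\omega)(W)$ as the contraction $\annihprod{\der_X\omega,W^\flat}$ there; its continuous extension to $p$ has value $(\der_X\omega)_p(w)$. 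The decisive and hardest step is to show that this extension is $0$: expanding the contraction in a radical orthogonal frame (Remark \ref{rem_contraction_orthonormal_invariant}) as $\sum_a \metric{E_a,E_a}^{-1}(\der_X\omega)(E_a)\,W^\flat(E_a)$, the terms along directions that become degenerate at $p$ are of the indeterminate form ``$0/0$'', and controlling them — balancing the vanishing of $W^\flat$ against the blow-up of the {\rannih} inner product as the signature drops — is the genuine technical core. Everything else reduces to bookkeeping with the Koszul identities of Theorem \ref{thm_Koszul_form_props} and the contraction rules of Section \ref{s_tensors_contraction_sign_const}.
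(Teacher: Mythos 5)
The sound portion of your proposal coincides with the paper's own argument. The printed proof of Corollary \ref{thm_cov_deriv_annih_smooth} is a single line---Proposition \ref{thm_cov_deriv_annih} ``because of continuity''---and its ingredients are exactly your first two reductions: the pointwise computation $(\der_X\omega)(W)=X(\omega(W))-\annihprod{\lderb{X}{W},\omega}=0$ for radical fields $W$ (this is the proof of Proposition \ref{thm_cov_deriv_annih}, using Corollary \ref{thm_Koszul_null_props_rad_stat}), plus the density of $M-\mansigvar M$ from Remark \ref{rem_sign_var_points}. Where you depart from the paper is in refusing to let bare continuity finish the job at a point $p\in\mansigvar M$ where a radical direction $w$ exists only at $p$; that caution is mathematically legitimate, since membership in the annihilator of the radical is not a closed condition when the radical jumps up: on $(\R,\ g=x^2\,\de x\otimes\de x)$ the smooth form $\de x$ lies in $\annih{(T_qM)}$ for every $q\neq 0$ yet fails at $q=0$, so any honest proof at such points must use the specific structure of $\der_X\omega$, not just its smoothness.

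The gap is that your treatment of precisely this case---which you yourself identify as the only real content---is announced rather than proved, and the mechanism you sketch is circular as stated. On a constant-signature region near $p$, in an orthogonal frame $(E_a)$ one has $\metric{W,E_a}/\metric{E_a,E_a}=W^a$, the $a$-th frame component of $W$, so your expansion
\begin{equation*}
\annihprod{\der_X\omega,W^\flat}=\sum_{a}\frac{(\der_X\omega)(E_a)\,W^\flat(E_a)}{\metric{E_a,E_a}}=\sum_{a}(\der_X\omega)(E_a)\,W^a
\end{equation*}
collapses identically to $(\der_X\omega)(W)$ on the dense set (the radical-frame terms drop out since $\der_X\omega$ is there {\rannih}). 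There is no $0/0$ cancellation left inside it to ``balance'': the rewriting returns exactly the smooth function whose limit at $p$ you set out to evaluate, so no estimate within that expression can ever produce the vanishing. To actually close the case you would have to go back to Definition \ref{def_cov_der_covect} and the Koszul identities of Theorem \ref{thm_Koszul_form_props} and show that the continuous extension of $X(\omega(W))-\annihprod{\lderb{X}{W},\omega}$ vanishes when $W_p=w$ is radical. As it stands, your text is a correct plan whose decisive step is missing---indeed the very step the paper compresses into the word ``continuity''.
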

\begin{proof}
Follows from Proposition \ref{thm_cov_deriv_annih} because of continuity.
\end{proof}

\begin{definition}
\label{def_cov_der_smooth}
Let $(M,g)$ be a {\rstationary} {\semiriem} manifold. We define
the following vector spaces of differential forms having smooth covariant derivatives:
\begin{equation}
	\srformsk 1 M = \{\omega\in\annihforms M|(\forall X\in\fivect M)\ \der_X\omega\in\annihforms M\},
\end{equation}
\begin{equation}
	\srformsk k M := \bigwedge^k_M\srformsk 1 M.
\end{equation}
\end{definition}

The following theorem extends some properties of the covariant derivative known from the {\nondeg} case \cfeg{ONe83}{59}.

\begin{theorem}
\label{thm_cov_der_covect_props}
The covariant derivative operator $\der$ of differential $1$-forms defined on a {\rstationary} {\semiriem} manifold $(M,g)$ has the following properties:
\begin{enumerate}
	\item \label{thm_cov_der_covect_props_linear}
	It is additive and $\R$-linear in both of its arguments.
	\item \label{thm_cov_der_covect_props_flinearX}
	It is $\fiscal M$-linear in the first argument:

	$\derb{fX}\omega = f\derb X\omega.$
	\item \label{thm_cov_der_covect_props_flinearY}
	It satisfies the \textit{Leibniz rule}:

	$\derb X{f\omega} = f\derb X\omega + X(f) \omega.$
	\item \label{thm_cov_der_covect_props_flat_commut}
	It commutes with the lowering operator:

	$\derb X{Y^\flat} = \lderb XY$.
	\end{enumerate}
	for any $X,Y\in\fivect M$, $\omega\in\annihforms{M}$ and $f\in\fiscal M$.
\end{theorem}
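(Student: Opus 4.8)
The plan is to verify all four properties by unwinding Definition~\ref{def_cov_der_covect}, which reads $\derc X\omega Z = X(\omega(Z)) - \annihprod{\lderb X Z,\omega}$, and then reducing each claimed identity to the already-established algebra of the lower covariant derivative (Theorem~\ref{thm_l_cov_der_props}), the bilinearity of the {\rannih} inner product $\annihprod{\cdot,\cdot}$ (Definition~\ref{def_co_inner_product}), and the Leibniz rule for the action of a vector field on a scalar. In every case I fix an arbitrary test field $Z\in\fivect M$, evaluate both sides on $Z$, and conclude the equality of $1$-forms from the arbitrariness of $Z$. Note first that $Y^\flat=\annih Y$ is automatically {\rannih}, and that on a {\rstationary} manifold $\lderb X Z$ is {\rannih} (Remark~\ref{rem_rad_stat_lower_der}), so every $\annihprod{\cdot,\cdot}$ appearing below is legitimate.

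Properties~\eqref{thm_cov_der_covect_props_linear} and \eqref{thm_cov_der_covect_props_flinearX} are immediate from this template. For $\R$-linearity in both slots I use that $X\mapsto X(\omega(Z))$ and $\omega\mapsto X(\omega(Z))$ are $\R$-linear, that $\lderb X Z$ is $\R$-bilinear by Theorem~\ref{thm_l_cov_der_props}, property~\eqref{thm_l_cov_der_props_linear}, and that $\annihprod{\cdot,\cdot}$ is bilinear. For $\fiscal M$-linearity in the first argument I combine $(fX)(\omega(Z)) = f\,X(\omega(Z))$ with $\lderb{fX}Z = f\lderb X Z$ from Theorem~\ref{thm_l_cov_der_props}, property~\eqref{thm_l_cov_der_props_flinearX}, and pull the scalar out of the second slot of $\annihprod{\cdot,\cdot}$.

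Property~\eqref{thm_cov_der_covect_props_flinearY} is where the non-tensorial term appears: evaluating $\derc X{f\omega}Z$, the scalar factor passes through $\annihprod{\cdot,\cdot}$ in the second slot, giving $f\,\annihprod{\lderb X Z,\omega}$, while expanding $X((f\omega)(Z)) = X(f)\,\omega(Z) + f\,X(\omega(Z))$ by the Leibniz rule produces the extra summand $X(f)\,\omega(Z)$; collecting terms yields $f\derb X\omega + X(f)\,\omega$.

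The main obstacle is property~\eqref{thm_cov_der_covect_props_flat_commut}. Evaluating on $Z$, Definition~\ref{def_cov_der_covect} gives $\derc X{Y^\flat}Z = X\metric{Y,Z} - \annihprod{\lderb X Z,Y^\flat}$. The decisive step is to apply Lemma~\ref{thm_contraction_with_metric} (in the special form $\annihprod{\tau,Y^\flat}=\tau(Y)$, valid for any {\rannih} $\tau$) to $\tau=\lderb X Z$, which yields $\annihprod{\lderb X Z,Y^\flat} = (\lderb X Z)(Y) = \kosz(X,Z,Y)$ by Definition~\ref{def_l_cov_der}. Hence $\derc X{Y^\flat}Z = X\metric{Y,Z} - \kosz(X,Z,Y)$, and invoking the metric property $\kosz(X,Y,Z)+\kosz(X,Z,Y) = X\metric{Y,Z}$ of the Koszul form (Theorem~\ref{thm_Koszul_form_props}, property~\eqref{thm_Koszul_form_props_commutYZ}) collapses this to $\kosz(X,Y,Z) = (\lderb X Y)(Z)$. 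The subtle point is precisely the mismatch in argument order: the contraction naturally pairs $\lderb X Z$ with $Y$, producing $\kosz(X,Z,Y)$, whereas the target is $\lderb X Y$ paired with $Z$, i.e.\ $\kosz(X,Y,Z)$; it is exactly the symmetry encoded in the ``metric'' identity of the Koszul form that reconciles the two. Since $Z$ is arbitrary, $\derb X{Y^\flat} = \lderb X Y$.
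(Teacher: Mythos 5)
Your proposal is correct and follows essentially the same route as the paper's proof: properties (1)--(3) by direct expansion of Definition~\ref{def_cov_der_covect} using Theorem~\ref{thm_l_cov_der_props} and the Leibniz rule, and property (4) by evaluating $\derc X{Y^\flat}Z = X\metric{Y,Z} - \annihprod{\lderb X Z,Y^\flat}$ on a test field and reconciling the argument-order mismatch via the metric identity. Your explicit citation of Lemma~\ref{thm_contraction_with_metric} for the step $\annihprod{\lderb X Z,Y^\flat}=(\lderb X Z)(Y)$ merely spells out a substitution the paper leaves implicit, and your use of the Koszul-form version of the metric property (Theorem~\ref{thm_Koszul_form_props}, property~\eqref{thm_Koszul_form_props_commutYZ}) is identical in content to the paper's appeal to Theorem~\ref{thm_l_cov_der_props}, property~\eqref{thm_l_cov_der_props_flinearZ}.
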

\begin{proof}
The property \eqref{thm_cov_der_covect_props_linear} follows from the direct application of Theorem \ref{thm_l_cov_der_props} to the Definition \ref{def_cov_der_covect}.

For property \eqref{thm_cov_der_covect_props_flinearX},
\begin{equation}
	\derc{fX}\omega Y = fX\left(\omega(Y)\right) - \annihprod{\lderb {fX} Y,\omega} = f \derc X\omega Y.
\end{equation}

Property \eqref{thm_cov_der_covect_props_flinearY} results by
\begin{equation}
\begin{array}{lll}
	\derc{X}{f\omega}Y &=& X\left(f\omega(Y)\right) - \annihprod{\lderb {X} Y,f\omega} \\
	&=& X(f)\omega(Y) +fX\left(\omega(Y)\right) - f\annihprod{\lderb {X} Y,\omega}\\
	&=& f\derc X\omega Y + X(f) \omega(Y).
\end{array}
\end{equation}

For property \eqref{thm_cov_der_covect_props_flat_commut}, we apply Definition \ref{def_cov_der_covect} to $\omega=Y^\flat$. Let $Z\in\fivect{M}$. Then,
\begin{equation}
\begin{array}{lll}
	\derc{X}{Y^\flat}Z &=& X\left(Y^\flat(Z)\right) - \annihprod{\lderb X Z,Y^\flat} \\
	&=& X\metric{Y,Z} - \lderc X Z Y \\
	&=& \lderc X Y Z, \\
\end{array}
\end{equation}
where the last identity follows from Theorem \ref{thm_l_cov_der_props_flinearZ} property \eqref{thm_l_cov_der_props_flinearZ}.
\end{proof}

\begin{corollary}
Let $(M,g)$ be a {\rstationary} {\semiriem} manifold, and
\begin{equation}
	\fiscalannih M=\{f\in\fiscal M|\de f\in\annihformsk 1 M\}.
\end{equation}
Then, $\srformsk k M$ from Definition \ref{def_cov_der_smooth} are $\fiscalannih M$-modules of differential forms.
\end{corollary}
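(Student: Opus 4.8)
The plan is to verify the two things that ``$\fiscalannih M$-module'' demands: that $\fiscalannih M$ is a commutative unital ring, and that each $\srformsk k M$ from Definition \ref{def_cov_der_smooth} is stable under multiplication by elements of $\fiscalannih M$; stability under sums is immediate from $\R$-linearity and the module axioms are inherited from the ambient forms. The single fact underlying the whole argument is that the radical-annihilator condition $\omega_p|_{\radix{T}_pM}=0$ is pointwise and $\R$-linear, so $\annihforms M$ is stable under multiplication by an arbitrary $f\in\fiscal M$; I would isolate this observation first, since both the ring property and the module closure rest on it.

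First I would check that $\fiscalannih M$ is a subring of $\fiscal M$. It contains $1$, because $\de 1=0\in\annihformsk 1 M$, and it is plainly closed under sums. For products, I would expand $\de(fh)=f\,\de h+h\,\de f$ for $f,h\in\fiscalannih M$; since $\de f,\de h\in\annihformsk 1 M$ and $\annihformsk 1 M$ is stable under multiplication by smooth functions, both summands, hence $\de(fh)$, lie in $\annihformsk 1 M$, so $fh\in\fiscalannih M$.

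The heart of the argument is the case $k=1$: I claim $f\omega\in\srformsk 1 M$ whenever $f\in\fiscalannih M$ and $\omega\in\srformsk 1 M$. First, $f\omega\in\annihforms M$ by the pointwise stability noted above. To see that $\der_X(f\omega)\in\annihforms M$ for every $X\in\fivect M$, I would apply the Leibniz rule of Theorem \ref{thm_cov_der_covect_props}, property \eqref{thm_cov_der_covect_props_flinearY}:
\begin{equation*}
\derb{X}{f\omega}=f\,\derb{X}{\omega}+X(f)\,\omega.
\end{equation*}
Here $\derb{X}{\omega}\in\annihforms M$ because $\omega\in\srformsk 1 M$, so $f\,\derb{X}{\omega}\in\annihforms M$; and $X(f)\,\omega\in\annihforms M$ since $\omega\in\annihforms M$ and $X(f)$ is a smooth scalar. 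The sum is therefore in $\annihforms M$, giving $f\omega\in\srformsk 1 M$. Combined with closure under sums from Theorem \ref{thm_cov_der_covect_props}, property \eqref{thm_cov_der_covect_props_linear}, this makes $\srformsk 1 M$ a $\fiscalannih M$-module.

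Finally I would pass to $k\geq 2$ through the exterior power $\srformsk k M=\bigwedge^k_M\srformsk 1 M$. A general element is a finite sum of decomposables $\omega_1\wedge\cdots\wedge\omega_k$ with each $\omega_i\in\srformsk 1 M$, and for $f\in\fiscalannih M$ I would absorb the scalar into a single factor, $f(\omega_1\wedge\cdots\wedge\omega_k)=(f\omega_1)\wedge\omega_2\wedge\cdots\wedge\omega_k$, which lies in $\bigwedge^k_M\srformsk 1 M$ by the $k=1$ case; the associativity and distributivity axioms are inherited from the ambient module $\fiformk k M$. I do not expect a genuine obstacle: the only delicate point is the pointwise stability of the radical-annihilator condition under multiplication by smooth functions, and once that is recorded the Leibniz rule does all the work. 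It is worth noting that this closure in fact holds for every $f\in\fiscal M$; the hypothesis $f\in\fiscalannih M$ is used only to guarantee that $\fiscalannih M$ is itself the ring over which we take the module.
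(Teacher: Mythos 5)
Your proposal is correct and takes essentially the same route as the paper: the paper's entire proof is a one-line appeal to the Leibniz rule (Theorem \ref{thm_cov_der_covect_props}, property \eqref{thm_cov_der_covect_props_flinearY}), which is precisely your central step, and your verifications of the subring structure of $\fiscalannih M$ and of the case $k\geq 2$ via decomposables merely make explicit what the paper leaves implicit. Your closing observation is also sound: since the radical-annihilator condition is pointwise, $\annihforms M$ is an $\fiscal M$-module, and the Leibniz rule then gives closure of $\srformsk k M$ under multiplication by all of $\fiscal M$, so the restriction to $\fiscalannih M$ is not forced by the closure argument itself.
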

\begin{proof}
From Theorem \ref{thm_cov_der_covect_props} property \eqref{thm_cov_der_covect_props_flinearY} follows that for any $f\in\fiscalannih M$ and $\omega\in\srformsk k M$, $f\omega\in\srformsk k M$.
\end{proof}

\subsection{The covariant derivative of differential forms}
\label{s_cov_der_forms}

We define now the covariant derivative for tensors which are covariant and radical annihilator in all their slots, in particular on differential forms (generalizing the corresponding formulas from the {\nondeg} case, see \eg \citep{GHLF04}{70}).

\begin{definition}
\label{def_cov_der_cov_tensors}
Let $(M,g)$ be a {\rstationary} {\semiriem} manifold. We define the covariant derivative of tensors of type $(0,s)$ as the operator
\begin{equation}
	\der:\fivect{M} \times \otimes^s_M\srformsk 1 M \to \otimes^s_M\annihformsk 1 M
\end{equation}
acting by
\begin{equation}
	\der_X(\omega_1\otimes\ldots\otimes\omega_s) := \der_X(\omega_1)\otimes\ldots\otimes\omega_s +\ldots + \omega_1\otimes\ldots\otimes\der_X(\omega_s)
\end{equation}
\end{definition}

In particular,
\begin{definition}
\label{def_cov_der_forms}
On a {\rstationary} {\semiriem} manifold  $(M,g)$  we define  the covariant derivative of $k$-differential forms by
\begin{equation}
	\der:\fivect{M} \times \srformsk k M \to \annihformsk k M,
\end{equation}
acting by
\begin{equation}
	\der_X(\omega_1\wedge\ldots\wedge\omega_s) := \der_X(\omega_1)\wedge\ldots\wedge\omega_s +\ldots + \omega_1\wedge\ldots\wedge\der_X(\omega_s)
\end{equation}
\end{definition}

\begin{theorem}
\label{thm_cov_der_cov_tensors}
The covariant derivative of a tensor $T\in\otimes^k_M\srformsk 1 M$ on a {\rstationary} {\semiriem} manifold $(M,g)$ satisfies the formula
\begin{equation}
\begin{array}{lll}
	\left(\nabla_X T\right)(Y_1,\ldots,Y_k) &=& X\left(T(Y_1,\ldots,Y_k)\right) \\
	&& - \sum_{i=1}^k\kosz(X,Y_i,\cocontr)T(Y_1,,\ldots,\cocontr,\ldots,Y_k)
\end{array}
\end{equation}
\end{theorem}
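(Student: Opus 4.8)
The plan is to reduce to the case of a decomposable tensor $T=\omega_1\otimes\cdots\otimes\omega_k$ with each $\omega_i\in\srformsk 1 M$, and then simply unfold the two defining identities: the Leibniz rule of Definition \ref{def_cov_der_cov_tensors} for $\der_X$ on tensor products, and the formula of Definition \ref{def_cov_der_covect} for $\der_X$ on a single {\rannih} $1$-form. Both sides of the claimed identity are additive in $T$ — the left side by the (linearly extended) definition of $\der_X$ on $\otimes^k_M\srformsk 1 M$, the right side because $T\mapsto T(Y_1,\ldots,Y_k)$ and $T\mapsto T(Y_1,\ldots,\cocontr,\ldots,Y_k)$ are both linear — so it suffices to verify the formula on decomposables, where every factor is a $1$-form to which Definition \ref{def_cov_der_covect} applies directly.

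For such a $T$, I would first apply the Leibniz rule and evaluate on $(Y_1,\ldots,Y_k)$ to obtain
\begin{equation*}
(\der_X T)(Y_1,\ldots,Y_k)=\sum_{i=1}^k\Big(\prod_{j\neq i}\omega_j(Y_j)\Big)(\der_X\omega_i)(Y_i).
\end{equation*}
Substituting $(\der_X\omega_i)(Y_i)=X(\omega_i(Y_i))-\annihprod{\lderb X{Y_i},\omega_i}$ from Definition \ref{def_cov_der_covect} splits this into two sums. The first sum, $\sum_i(\prod_{j\neq i}\omega_j(Y_j))\,X(\omega_i(Y_i))$, is exactly the expansion of $X\big(\prod_j\omega_j(Y_j)\big)=X\big(T(Y_1,\ldots,Y_k)\big)$ produced by the derivation (product) property of the vector field $X$ acting on the scalar functions $\omega_j(Y_j)$; this recovers the first term of the right-hand side.

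The remaining sum is $-\sum_i(\prod_{j\neq i}\omega_j(Y_j))\annihprod{\lderb X{Y_i},\omega_i}$. Here I would use $\lderb X{Y_i}=\kosz(X,Y_i,\_)$ (Definition \ref{def_l_cov_der}) and rewrite the pairing in contraction notation as $\annihprod{\lderb X{Y_i},\omega_i}=\kosz(X,Y_i,\cocontr)\omega_i(\cocontr)$; pushing the scalar factors $\prod_{j\neq i}\omega_j(Y_j)$ back inside the $\cocontr$-slot — legitimate because $\annihg$ is $\fiscal M$-bilinear — turns each summand into $\kosz(X,Y_i,\cocontr)T(Y_1,\ldots,\cocontr,\ldots,Y_k)$ with the $\cocontr$ in the $i$-th argument. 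Summing over $i$ produces the second term, finishing the decomposable case and hence, by additivity, the general one.

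The only genuine point to check — and where I expect the main, if modest, obstacle to lie — is the well-definedness of the covariant contraction $\kosz(X,Y_i,\cocontr)T(Y_1,\ldots,\cocontr,\ldots,Y_k)$ appearing in the statement, since such a contraction is defined only when both contracted slots are {\rannih}. This is guaranteed: $\kosz(X,Y_i,\_)=\lderb X{Y_i}\in\annihforms M$ because $(M,g)$ is {\rstationary} (Remark \ref{rem_rad_stat_lower_der}), while the frozen form $T(Y_1,\ldots,\cocontr,\ldots,Y_k)$ is a scalar multiple of $\omega_i\in\srformsk 1 M\subseteq\annihforms M$ and is therefore {\rannih} as well. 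With this in hand, the identification of the contraction with the inner-product pairing $\annihprod{\cdot,\cdot}$ is immediate from Definition \ref{def_co_inner_product}, and the computation above goes through verbatim.
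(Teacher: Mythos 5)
Your proposal is correct and takes essentially the same route as the paper's proof: reduce to decomposable tensors $T=\omega_1\otimes_M\cdots\otimes_M\omega_k$ by linearity, expand via the Leibniz rule of Definition \ref{def_cov_der_cov_tensors}, substitute Definition \ref{def_cov_der_covect} for each $(\der_X\omega_i)(Y_i)$, recombine the first sum into $X\left(T(Y_1,\ldots,Y_k)\right)$, and rewrite the pairings $\annihprod{\lderb X{Y_i},\omega_i}$ as the contraction terms $\kosz(X,Y_i,\cocontr)T(Y_1,\ldots,\cocontr,\ldots,Y_k)$. Your additional verification that both contracted slots are {\rannih} (via radical-stationarity and $\srformsk 1 M\subseteq\annihforms M$), so the covariant contraction is well defined, is a correct detail that the paper leaves implicit.
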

\begin{proof}
Because of linearity, it is enough to prove it for the case
\begin{equation}
	T = \omega_1\otimes_M\ldots\otimes_M\omega_k.
\end{equation}
From the Definitions \ref{def_cov_der_cov_tensors} and \ref{def_cov_der_covect},
\begin{equation}
\begin{array}{lll}
	(\der_XT)(Y_1,\ldots,Y_k) &=& 	\der_X(\omega_1\otimes_M\ldots\otimes_M\omega_k)(Y_1,\ldots,Y_k) \\
 &=& \derc X {\omega_1}{Y_1}\cdot\ldots\cdot\omega_k(Y_k) +\ldots \\
 && + \omega_1(Y_1)\cdot\ldots\cdot\derc X {\omega_k}{Y_k} \\
 &=& (X(\omega_1(Y_1)) - \annihprod{\lderb X Y_1,\omega_1})\cdot\ldots\cdot\omega_k(Y_k) +\ldots \\
 && + \omega_1(Y_1)\cdot\ldots\cdot(X(\omega_k(Y_k)) - \annihprod{\lderb X Y_k,\omega_k}) \\
 &=& X(\omega_1(Y_1))\cdot\ldots\cdot\omega_k(Y_k) + \ldots \\
 && +  \omega_1(Y_1)\cdot\ldots\cdot X(\omega_k(Y_k)) \\
 && - \annihprod{\lderb X Y_1,\omega_1}\cdot\ldots\cdot\omega_k(Y_k) \\
 && - \omega_1(Y_1)\cdot\ldots\cdot\annihprod{\lderb X Y_k,\omega_k} \\
 &=&  X\left(T(Y_1,\ldots,Y_k)\right) \\
 && - \ds{\sum_{i=1}^k}\kosz(X,Y_i,\cocontr)T(Y_1,,\ldots,\cocontr,\ldots,Y_k)
\end{array}
\end{equation}
and the desired formula follows.
\end{proof}

\begin{corollary}
\label{thm_cov_der_forms}
Let $(M,g)$ be a {\rstationary} {\semiriem} manifold. The covariant derivative of a $k$-differential form $\omega\in\srformsk kM$ takes the form
\begin{equation}
\begin{array}{lll}
	\left(\nabla_X\omega\right)(Y_1,\ldots,Y_k) &:=& X\left(\omega(Y_1,\ldots,Y_k)\right) \\
	&& - \sum_{i=1}^k\kosz(X,Y_i,\cocontr)\omega(Y_1,,\ldots,\cocontr,\ldots,Y_k)
\end{array}
\end{equation}
\end{corollary}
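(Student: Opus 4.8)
The plan is to deduce this from Theorem~\ref{thm_cov_der_cov_tensors}, since the asserted formula for a $k$-form $\omega$ is literally the same expression already established for a general covariant tensor $T\in\otimes^k_M\srformsk 1 M$. The only genuine content is that the covariant derivative of forms from Definition~\ref{def_cov_der_forms} (defined by the wedge-Leibniz rule) agrees, on forms, with the covariant derivative of tensors from Definition~\ref{def_cov_der_cov_tensors} (defined by the tensor-Leibniz rule); once this is secured, the Corollary is an immediate specialization of the Theorem to the antisymmetric case.

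First I would reduce to a decomposable form. Since $\srformsk k M=\bigwedge^k_M\srformsk 1 M$, every $\omega\in\srformsk k M$ is an $\R$-linear combination of terms $\omega_1\wedge\ldots\wedge\omega_k$ with $\omega_i\in\srformsk 1 M$, and by the $\R$-linearity of $\der_X$ recorded in Theorem~\ref{thm_cov_der_covect_props} it suffices to prove the formula for such a decomposable $\omega$. In particular $\omega\in\otimes^k_M\srformsk 1 M$ is {\rannih} in every slot, so the contraction $\kosz(X,Y_i,\cocontr)\,\omega(Y_1,\ldots,\cocontr,\ldots,Y_k)$ on the right-hand side is well defined by the results of Section~\ref{s_tensors_contraction_sign_const}.

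Next I would compare the two derivations. Writing $\mathrm{Alt}$ for the antisymmetrizer, one has $\omega_1\wedge\ldots\wedge\omega_k=\mathrm{Alt}(\omega_1\otimes\ldots\otimes\omega_k)$ up to the usual normalizing constant, and because each $\der_X\omega_i$ is again a $1$-form, applying $\mathrm{Alt}$ to the tensor-Leibniz expansion in Definition~\ref{def_cov_der_cov_tensors} reproduces exactly the wedge-Leibniz expansion of Definition~\ref{def_cov_der_forms}; that is, $\der_X\circ\mathrm{Alt}=\mathrm{Alt}\circ\der_X$ on decomposables. Applying Theorem~\ref{thm_cov_der_cov_tensors} to $T=\omega_1\otimes\ldots\otimes\omega_k$ and then antisymmetrizing, the term $X\!\left(T(Y_1,\ldots,Y_k)\right)$ passes to $X\!\left(\omega(Y_1,\ldots,Y_k)\right)$ because $\mathrm{Alt}(T)$ is proportional to $\omega$, and the contraction sum passes to $\sum_{i}\kosz(X,Y_i,\cocontr)\,\omega(Y_1,\ldots,\cocontr,\ldots,Y_k)$, yielding the stated formula.

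I expect the main obstacle to be precisely the last step: verifying that the family of contraction terms commutes with the antisymmetrizer acting on the visible arguments $Y_1,\ldots,Y_k$, with no stray terms or sign discrepancies. The delicate point is that the summed-over dummy slot $\cocontr$, paired through the symmetric {\rannih} product $\annihg$, sits in the $i$-th argument while $\mathrm{Alt}$ permutes all $k$ arguments; the cancellation that makes the result come out as a single clean sum over $i$ relies on $\omega$ already being alternating and on $\annihg$ being symmetric. This is the same combinatorial mechanism that shows the covariant derivative of a differential form is again a form in the {\nondeg} case, here rechecked with the contraction $\annihg$ from Definition~\ref{def_co_inner_product} replacing the inverse metric.
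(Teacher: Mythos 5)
Your proposal is correct and is essentially the paper's own argument: the paper's entire proof of this corollary is the one-line remark that it follows from Theorem \ref{thm_cov_der_cov_tensors} by verifying that the antisymmetry property of $\omega$ is maintained, which is precisely your reduction to decomposable forms plus the commutation of $\der_X$ with the antisymmetrizer. The only (harmless) difference is that the final step you flag as delicate requires nothing beyond $\R$-linearity of both sides of the formula in the tensor argument --- the contraction term passes through the antisymmetrization term by term, with no sign cancellations or symmetry of $\annihg$ needed.
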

\begin{proof}
Follows from Theorem \ref{thm_cov_der_cov_tensors}, by verifying that the antisymmetry property of $\omega$ is maintained.
\end{proof}

\begin{corollary}
On a {\rstationary} {\semiriem} manifold $(M,g)$, the metric $g$ is parallel:
\begin{equation}
	\nabla_Xg = 0.
\end{equation}
\end{corollary}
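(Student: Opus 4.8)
The plan is to treat $g$ as a covariant $2$-tensor, feed it into the general formula for the covariant derivative of covariant tensors, and then collapse the resulting contraction terms using the metric property of the Koszul form. First I would recall from Example \ref{thm_metric_radical_annihilator} that $g$ is {\rannih} in both of its slots, so that $g\in\annihforms M\odot_M\annihforms M$; this is exactly what makes the covariant contractions below meaningful and what places $g$ in the domain $\otimes^2_M\srformsk 1 M$ on which Theorem \ref{thm_cov_der_cov_tensors} operates.

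Applying Theorem \ref{thm_cov_der_cov_tensors} with $k=2$ and $T=g$ gives, for arbitrary $X,Y,Z\in\fivect M$,
\begin{equation*}
(\nabla_X g)(Y,Z) = X\metric{Y,Z} - \kosz(X,Y,\cocontr)\metric{\cocontr,Z} - \kosz(X,Z,\cocontr)\metric{Y,\cocontr}.
\end{equation*}
The two contraction terms are the key step. Since $(M,g)$ is {\rstationary} (Definition \ref{def_radical_stationary_manifold}), both $\kosz(X,Y,\_)$ and $\kosz(X,Z,\_)$ are {\rannih} $1$-forms. I would therefore invoke Lemma \ref{thm_contraction_with_metric}, which says that contracting a {\rannih} tensor against the metric in the contracted slot returns the tensor itself; this turns $\kosz(X,Y,\cocontr)\metric{\cocontr,Z}$ into $\kosz(X,Y,Z)$ and $\kosz(X,Z,\cocontr)\metric{Y,\cocontr}$ into $\kosz(X,Z,Y)$.

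At this point the expression reads
\begin{equation*}
(\nabla_X g)(Y,Z) = X\metric{Y,Z} - \kosz(X,Y,Z) - \kosz(X,Z,Y),
\end{equation*}
and the conclusion follows at once from the metric property of the Koszul form (Theorem \ref{thm_Koszul_form_props}, property \eqref{thm_Koszul_form_props_commutYZ}), namely $\kosz(X,Y,Z)+\kosz(X,Z,Y)=X\metric{Y,Z}$, which cancels the right-hand side exactly. I expect the only point requiring genuine care to be the bookkeeping at the start: confirming that $g$ indeed qualifies as an element of $\otimes^2_M\srformsk 1 M$, so that the formula of Theorem \ref{thm_cov_der_cov_tensors} is licensed and $\nabla_X g$ is the smooth {\rannih} tensor it ought to be. The algebraic cancellation itself is immediate once Example \ref{thm_metric_radical_annihilator}, Lemma \ref{thm_contraction_with_metric}, and the metric property of $\kosz$ are assembled.
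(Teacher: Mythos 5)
Your proposal is correct and follows essentially the same route as the paper: the paper's proof likewise applies Theorem \ref{thm_cov_der_cov_tensors} to $T=g$ and cancels via the metric property \eqref{thm_Koszul_form_props_commutYZ} of the Koszul form. You merely make explicit two steps the paper leaves implicit --- the collapse of the contractions $\kosz(X,Y,\cocontr)g(\cocontr,Z)=\kosz(X,Y,Z)$ via Lemma \ref{thm_contraction_with_metric} (using radical-stationarity and Example \ref{thm_metric_radical_annihilator}), and the bookkeeping that licenses feeding $g$ into the tensor derivative formula --- which is a sound elaboration, not a different argument.
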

\begin{proof}
Follows from Theorems \ref{thm_cov_der_cov_tensors} and \ref{thm_Koszul_form_props}, property \eqref{thm_Koszul_form_props_commutYZ}:
\begin{equation}
	(\nabla_Xg)(Y,Z) = X\metric{Y,Z} - \kosz(X,Y,\cocontr)g(\cocontr,Z) - \kosz(X,Z,\cocontr)g(Y,\cocontr) = 0.
\end{equation}
\end{proof}

\subsection{{\ssemireg} {\semiriem} manifolds}
\label{s_semi_regular}

An important particular type of {\rstationary} {\semiriem} manifold is provided by the {\semireg} {\semiriem} manifolds, introduced below.

\begin{definition}
\label{def_semi_regular_semi_riemannian}
A \textit{{\semireg} {\semiriem} manifold} is a singular {\semiriem} manifold $(M,g)$ which satisfies
\begin{equation}
	\ldera X Y \in\srformsk 1 M
\end{equation}
for any vector fields $X,Y\in\fivect{M}$.
\end{definition}

\begin{remark}
\label{rem_semi_regular_semi_riemannian}
By Definition \ref{def_cov_der_smooth}, this is equivalent to saying that for any $X,Y,Z\in\fivect M$
\begin{equation}
	\dera X {\ldera Y}Z \in \annihforms M.
\end{equation}
\end{remark}

\begin{remark}
Recall that $\srformsk 1 M \subseteq \annihforms M$. This means that any {\semireg} {\semiriem} manifold is also {\rstationary} (\cf Definition \ref{def_radical_stationary_manifold}).
\end{remark}

\begin{proposition}
\label{thm_sr_cocontr_kosz}
Let $(M,g)$ be a {\rstationary} {\semiriem} manifold. Then, the manifold $(M,g)$ is {\semireg} if and only if for any $X,Y,Z,T\in\fivect M$
\begin{equation}
	\kosz(X,Y,\cocontr)\kosz(Z,T,\cocontr) \in \fiscal M.
\end{equation}
\end{proposition}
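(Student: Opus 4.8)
The plan is to reduce both sides of the biconditional to the smoothness of a single family of functions, by rewriting Definition \ref{def_cov_der_covect} applied to the Koszul $1$-form. First I would note that, since $(M,g)$ is {\rstationary}, each $\lderb Z T = \kosz(Z,T,\_)$ lies in $\annihforms M$ (Remark \ref{rem_rad_stat_lower_der}); hence the expression $\kosz(X,Y,\cocontr)\kosz(Z,T,\cocontr)$ is exactly the covariant contraction $\annihprod{\lderb X Y,\lderb Z T}$ of two {\rannih} $1$-forms, and $\der_X(\lderb Z T)$ is a well-defined element of $\discformsk 1 M$. Applying Definition \ref{def_cov_der_covect} to $\omega=\lderb Z T$ and evaluating on $Y$ gives, for all $X,Y,Z,T\in\fivect M$, the identity
\begin{equation*}
	\annihprod{\lderb X Y,\lderb Z T} = X\big(\kosz(Z,T,Y)\big) - \left(\der_X(\lderb Z T)\right)(Y).
\end{equation*}
Both functions on the right and left are defined pointwise on all of $M$; the only question is their smoothness.

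Next I would observe that $\kosz(Z,T,Y)$ is a smooth function, being built from $g$, derivations and Lie brackets via Definition \ref{def_Koszul_form}, so $X(\kosz(Z,T,Y))\in\fiscal M$ always. Consequently the identity exhibits $\annihprod{\lderb X Y,\lderb Z T}$ and $\left(\der_X(\lderb Z T)\right)(Y)$ as differing by a smooth function, so one is smooth if and only if the other is. For the direction assuming semi-regularity: if $(M,g)$ is {\semireg} then $\lderb Z T\in\srformsk 1 M$, so by Definitions \ref{def_semi_regular_semi_riemannian} and \ref{def_cov_der_smooth} the $1$-form $\der_X(\lderb Z T)$ lies in $\annihforms M$ and is in particular smooth; evaluating on any $Y$ yields a smooth function, whence $\annihprod{\lderb X Y,\lderb Z T}\in\fiscal M$ for all $X,Y,Z,T$.

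For the converse, suppose every contraction $\annihprod{\lderb X Y,\lderb Z T}$ is smooth. Fixing $X,Z,T$ and letting $Y$ range over $\fivect M$, the identity shows that $\left(\der_X(\lderb Z T)\right)(Y)$ is smooth for every $Y$. The remaining point is to upgrade this to smoothness of the $1$-form $\der_X(\lderb Z T)$ itself: choosing a local smooth frame $(E_a)$, the components $\left(\der_X(\lderb Z T)\right)(E_a)$ are then smooth, so $\der_X(\lderb Z T)$ is a smooth section of $T^*M$, and by Corollary \ref{thm_cov_deriv_annih_smooth} it is {\rannih}, i.e.\ $\der_X(\lderb Z T)\in\annihforms M$. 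Thus $\lderb Z T\in\srformsk 1 M$ for all $Z,T$, which is precisely semi-regularity. The main obstacle is not any hard estimate but this last bookkeeping step at the signature-changing locus $\mansigvar M$: one must be careful that $\der_X(\lderb Z T)$ is a priori only controlled on the dense set $M-\mansigvar M$ (Remark \ref{rem_sign_var_points}), and that testing smoothness against all vector fields, equivalently against a frame, genuinely delivers smoothness of the section in the sense of $\discformsk 1 M$ and Remark \ref{rem_contraction_sign_change_smooth}.
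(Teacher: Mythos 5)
Your proof is correct and takes essentially the same route as the paper: both hinge on the single identity, obtained from Definition \ref{def_cov_der_covect} applied to $\omega=\lderb Z T$, that $\left(\der_X\left(\lderb Z T\right)\right)(Y) = X\left(\kosz(Z,T,Y)\right) - \annihprod{\lderb X Y,\lderb Z T}$, so that since $X\left(\kosz(Z,T,Y)\right)\in\fiscal M$ always, the contraction is smooth if and only if the covariant derivative of the lower derivative is. Your additional bookkeeping (testing against a local frame at the signature-changing locus and invoking Corollary \ref{thm_cov_deriv_annih_smooth} for the {\rannih} property) merely makes explicit details the paper's terser proof leaves implicit.
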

\begin{proof}
From the Definition \ref{def_cov_der_covect} of the covariant derivative of $1$-forms we obtain that
\begin{equation}
\begin{array}{lll}
	\derc X {\lderb YZ} T 
	&=& X\left(\lderc Y Z T\right) - \annihprod{\lderb X T,\lderb YZ} \\
	&=& X\left(\lderc Y Z T\right) - \kosz(X,T,\cocontr)\kosz(Y,Z,\cocontr). \\
\end{array}
\end{equation}
It follows that $\derc X {\lderb YZ} T$ is smooth if and only if $\kosz(X,T,\cocontr)\kosz(Y,Z,\cocontr)$ is.
\end{proof}

\section{Curvature of {\semireg} {\semiriem} manifolds}
\label{s_riemann_curvature}

The standard way to define the curvature invariants is to construct the Levi-Civita connection of the metric \cfeg{ONe83}{59}, and from this the curvature operator \cfeg{ONe83}{74}. The Ricci tensor and the scalar curvature \cfeg{ONe83}{87--88} follow by contraction \cfeg{ONe83}{83}.

Unfortunately, in the case of singular {\semiriem} manifolds the usual road is not available, because there is no intrinsic Levi-Civita connection. But, as we shall see in this section, the Riemann curvature tensor can be obtained from the lower covariant derivative and the covariant derivative of {\rannih} differential forms. 
For {\rstationary} manifolds the Riemann curvature tensor thus introduced is guaranteed to be smooth only on the regions of constant signature, but for {\semireg} manifolds it is smooth everywhere.

In order to obtain the Ricci curvature tensor, and further the scalar curvature, we need to contract the Riemann curvature tensor in two covariant indices. Because the metric may be degenerate, this covariant contraction can be defined only if the Riemann curvature tensor is {\rannih} in its slots. We will see that this is the case, and in \sref{s_ricci_tensor_scalar} we define the Ricci tensor and the scalar curvature.

\subsection{Riemann curvature of {\semireg} {\semiriem} manifolds}
\label{ss_riemann_curvature}

\begin{definition}
\label{def_riemann_curvature_operator}
Let $(M,g)$ be a {\rstationary} {\semiriem} manifold. We define the \textit{lower Riemann curvature operator} as
\begin{equation}
	\curv{}{}: \fivect M ^3 \to \discformsk 1 M
\end{equation}
\begin{equation}
\label{eq_riemann_curvature_operator}
	\curv XY Z := \dera X {\ldera Y}Z - \dera Y {\ldera X}Z - \ldera {[X,Y]}Z
\end{equation}
for any vector fields $X,Y,Z\in\fivect{M}$.
\end{definition}

\begin{definition}
\label{def_riemann_curvature}
We define the \textit{Riemann curvature tensor} as
\begin{equation}
	R: \fivect M\times \fivect M\times \fivect M\times \fivect M \to \R,
\end{equation}
\begin{equation}
\label{eq_riemann_curvature}
	R(X,Y,Z,T) := (\curv XY Z)(T)
\end{equation}
for any vector fields $X,Y,Z,T\in\fivect{M}$.
\end{definition}

\begin{remark}
The Riemann curvature tensor from Definition \ref{def_riemann_curvature} generalizes the Riemann curvature tensor $R(X,Y,Z,T) := \metric{R_{XY}Z,T}$ known from {\semiriem} geometry \cfeg{ONe83}{75}.
\end{remark}

\begin{remark}
It follows from the Definition \ref{def_riemann_curvature} that
\begin{equation}
\label{eq_riemann_curvature_explicit}
	R(X,Y,Z,T) = \derc X {{\ldera Y}Z}T - \derc Y {{\ldera X}Z}T - \lderc {[X,Y]}ZT
\end{equation}
for any vector fields $X,Y,Z,T\in\fivect{M}$.
\end{remark}

\begin{theorem}
\label{thm_riemann_curvature_semi_regular}
Let $(M,g)$ be a {\semireg} {\semiriem} manifold. The Riemann curvature is a smooth tensor field $R\in\tensors 0 4 M$.
\end{theorem}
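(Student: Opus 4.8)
The plan is to establish two separate facts: that $R$ is $\fiscal M$-multilinear in all four arguments, so that it defines a genuine pointwise tensor, and that its components are smooth, the latter being where the {\semireg} hypothesis enters. For multilinearity I would proceed slot by slot. The last slot is immediate, since $R(X,Y,Z,\cocontr)=(\curv XY Z)(\cocontr)$ is merely evaluation of the $1$-form $\curv XY Z$. For the first slot I would compute $\curv{fX}{Y} Z$, using the $\fiscal M$-linearity of $\lder$ and $\der$ in their first arguments (Theorems \ref{thm_l_cov_der_props} and \ref{thm_cov_der_covect_props}), the Leibniz rule for $\der$, and the identity $[fX,Y]=f[X,Y]-Y(f)X$; the resulting $Y(f)$-terms cancel, giving $\curv{fX}{Y} Z=f\curv XY Z$. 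Linearity in the second slot is then free from the antisymmetry $\curv XY Z=-\curv YX Z$, which is evident from Definition \ref{def_riemann_curvature_operator} together with $[Y,X]=-[X,Y]$. The third slot is the only delicate one: replacing $Z$ by $fZ$ and using $\lderb Y{fZ}=f\lderb YZ+Y(f)Z^\flat$, then applying $\der_X$ through its own Leibniz rule and the identity $\derb X{Z^\flat}=\lderb XZ$ (Theorem \ref{thm_cov_der_covect_props}), I would find that the first-order terms in $X(f)$ and $Y(f)$ coincide between the two summands and cancel, while the residual second-order contribution collapses to $\bigl(X(Y(f))-Y(X(f))-[X,Y](f)\bigr)Z^\flat=0$ by the definition of the Lie bracket. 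Hence $\curv XY{fZ}=f\curv XY Z$ and $R$ is multilinear over $\fiscal M$.

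For smoothness I would expand $R$ through the explicit formula \eqref{eq_riemann_curvature_explicit} together with Definition \ref{def_cov_der_covect} of $\der$ on $1$-forms, obtaining
\begin{equation*}
\begin{array}{lll}
R(X,Y,Z,T) &=& X(\kosz(Y,Z,T)) - Y(\kosz(X,Z,T)) - \kosz([X,Y],Z,T) \\
 && {}- \kosz(X,T,\cocontr)\kosz(Y,Z,\cocontr) + \kosz(Y,T,\cocontr)\kosz(X,Z,\cocontr).
\end{array}
\end{equation*}
The three Koszul terms on the first line are smooth, because the Koszul form is assembled from $g$ and Lie brackets and differentiation preserves smoothness. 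The two contraction terms on the second line are the only ones that invoke the possibly divergent $\annihg$, and here the {\semireg} hypothesis is exactly what is needed: by Proposition \ref{thm_sr_cocontr_kosz}, each product $\kosz(X,T,\cocontr)\kosz(Y,Z,\cocontr)$ lies in $\fiscal M$. Therefore $R(X,Y,Z,T)$ is smooth for all smooth $X,Y,Z,T$; evaluating on coordinate frames yields smooth components $R_{abcd}$, and combined with the multilinearity established above this gives $R\in\tensors 0 4 M$.

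The main obstacle is precisely the behavior of $\annihg$ on the signature-change set $\mansigvar M$: taken individually the Koszul contractions can blow up there, so that without the {\semireg} hypothesis $R$ would be guaranteed smooth only on the constant-signature region $M-\mansigvar M$. The entire force of the argument is that the dangerous terms enter only through the combination tamed by Proposition \ref{thm_sr_cocontr_kosz}, the algebraic cancellations in the tensoriality check — in particular the Hessian term in the third slot — being otherwise routine.
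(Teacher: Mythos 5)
Your proposal is correct and takes essentially the same route as the paper: the slot-by-slot $\fiscal M$-linearity checks (the $[fX,Y]=f[X,Y]-Y(f)X$ cancellation in the first slot, antisymmetry for the second, the Hessian cancellation $X(Y(f))-Y(X(f))-[X,Y](f)=0$ in the third, and evaluation of a $1$-form for the fourth) are exactly the paper's computations. The only cosmetic difference is the smoothness step, where the paper invokes the semi-regularity definition directly (smoothness of $\dera X{\ldera Y}Z$ via Definition \ref{def_semi_regular_semi_riemannian} and Remark \ref{rem_semi_regular_semi_riemannian}), while you expand $R$ in terms of the Koszul form and appeal to Proposition \ref{thm_sr_cocontr_kosz} --- an equivalent argument whose expansion the paper itself records later as Proposition \ref{thm_riemann_curvature_tensor_koszul_formula}.
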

\begin{proof}
Remember from Theorem \ref{thm_l_cov_der_props}, property \eqref{thm_l_cov_der_props_linear} that the lower covariant derivative for vector fields is additive and $\R$-linear in both of is arguments. From the same Theorem \ref{thm_cov_der_covect_props} property \eqref{thm_l_cov_der_props_linear}, we recall that the covariant derivative for differential $1$-forms is additive and $\R$-linear in both of is arguments. By combining the two, it follows \textit{the additivity and $\R$-linearity} of the Riemann curvature $R$ in all of its four arguments.

We will show now that $R$ is $\fiscal M$-linear in its four arguments. The proof goes almost similar to the {\nondeg} case, but we will give it explicitly, because in our proof we need to avoid any use of the Levi-Civita connection or of the inverse of the metric tensor, for example index raising.

We apply the properties of the lower covariant derivative for vector fields, as exposed in Theorem \ref{thm_cov_der_covect_props} properties \eqref{thm_Koszul_form_props_flinearX}-\eqref{thm_Koszul_form_props_flinearZ}, and those of the covariant derivative for differential $1$-forms, as known from Theorem \ref{thm_cov_der_covect_props}, properties \eqref{thm_cov_der_covect_props_flinearX}-\eqref{thm_cov_der_covect_props_flat_commut}, to verify that for any function $f\in\fiscal{M}$, $R(fX,Y,Z,T)=R(X,fY,Z,T)=R(X,Y,fZ,T)=R(X,Y,Z,fT)=fR(X,Y,Z,T)$.

Since $[fX,Y]=f[X,Y]-Y(f)X$,
\begin{equation*}
\begin{array}{lll}
	R(fX,Y,Z,T) &=& \derc {fX} {{\ldera Y}Z}T - \derc Y {{\ldera {fX}}Z}T - \lderc {[fX,Y]}ZT \\
	&=& f\derc {X} {{\ldera Y}Z}T - \derc Y {{(f\ldera {X}}Z)}T \\
	&& - \lderc {f[X,Y]-Y(f)X}ZT \\
	&=& f\derc {X} {{\ldera Y}Z}T - f\derc Y {{\ldera {X}}Z}T \\
	&& - Y(f)\lderc X Z T - f\lderc {[X,Y]}ZT \\
	&&  + Y(f)\lderc {X}ZT \\
	&=& fR(X,Y,Z,T). \\
\end{array}
\end{equation*}
The Definition \ref{def_riemann_curvature} implies that $R(X,Y,Z,T)=-R(Y,X,Z,T)$, which leads immediately to 
\begin{equation}
	R(X,fY,Z,T)=fR(X,Y,Z,T).
\end{equation}
\begin{equation*}
\begin{array}{lll}
	R(X,Y,fZ,T) &=& \derc {X} {{\ldera Y}{fZ}}T - \derc Y {{\ldera {X}}{fZ}}T - \lderc {[X,Y]}{fZ}T \\
	 &=& \derc {X} {(f{\ldera Y}{Z}+Y(f)Z)}T \\
	 && - \derc {Y} {(f{\ldera X}{Z}+X(f)Z)}T\\
	 && - (f{\ldera {[X,Y]}}{Z}+[X,Y](f)Z^\flat)(T)\\
	 &=& \derc {X} {(f{\ldera Y}{Z})}T + \derc {X} {(Y(f)Z^\flat)}T \\
	 && - \derc {Y} {(f{\ldera X}{Z})}T - \derc {Y} {(X(f)Z^\flat)}T \\
	 && - f({\ldera {[X,Y]}}{Z})(T)-[X,Y](f)Z^\flat(T)\\
	 &=& f\derc {X} {{\ldera Y}{Z}}T + X(f) {({\ldera Y}{Z})}(T) \\
	 && + X(Y(f)) {(Z^\flat)}(T)  + Y(f)\derc {X} {Z^\flat}T \\
	 && - f\derc {Y} {{\ldera X}{Z}}T - Y(f) {({\ldera X}{Z})}(T) \\
	 && - Y(X(f)) {(Z^\flat)}(T)  - X(f)\derc {Y} {Z^\flat}T \\
	 && - f({\ldera {[X,Y]}}{Z})(T)-[X,Y](f)Z^\flat(T)\\
	&=& fR(X,Y,Z,T). \\
\end{array}
\end{equation*}
The $\fiscal M$-linearity in $T$ follows from the definition of $R$, observing that $\derb {X} {{\ldera Y}{Z}}$, $\derb Y {{\ldera {X}}{Z}}$ and $\lderb {[X,Y]}{Z}$ are in fact differential $1$-forms.

The lower covariant derivative of a smooth vector field is a smooth differential $1$-form on $M$, therefore ${\ldera X}{Z}$, ${\ldera Y}{Z}$ and ${\ldera {[X,Y]}}{Z}$ are smooth on $M$. It follows that $R$ is also smooth on $M$.
\end{proof}

\begin{remark}
One can write
\begin{equation}
	\curv{}{}: \fivect M ^2 \to \tensors 0 2 M
\end{equation}
\begin{equation}
	\curv XY := \dera X {\ldera Y} - \dera Y {\ldera X} - \ldera {[X,Y]},
\end{equation}
with the amendment that
\begin{equation}
	\curv XY(Z,T) := (\curv XYZ)(T)
\end{equation}
for any $Z,T\in\fivect M$.
\end{remark}

\subsection{The symmetries of the Riemann curvature tensor}
\label{s_riemann_curvature_symmetries}

The following proposition generalizes well-known symmetry properties of the Riemann curvature tensor of a {\nondeg} metric \cfeg{ONe83}{75} to  {\semireg} metrics. The proofs are similar to the {\nondeg} case, except that they avoid using the covariant derivative and the index raising, so we prefer to give them explicitly.

\begin{proposition}[The symmetries of the Riemann curvature]
\label{thm_curv_symm}
Let $(M,g)$ be a {\semireg} {\semiriem} manifold. Then, for any $X,Y,Z,T\in\fivect M$, the Riemann curvature has the following symmetry properties
\begin{enumerate}
	\item \label{thm_curv_symm_xy}
	$\curv XY = -\curv YX$
	\item \label{thm_curv_symm_zt}
	$\curv XY(Z,T) = -\curv XY(T,Z)$
	\item  \label{thm_curv_symm_xyz}
	$\curv YZ X + \curv ZX Y + \curv XY Z = 0$
	\item  \label{thm_curv_symm_xy_zt}
	$\curv XY(Z,T) = \curv ZT(X,Y)$
\end{enumerate}
\end{proposition}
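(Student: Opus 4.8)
The first property is immediate from the definition. Interchanging $X$ and $Y$ in Definition \ref{def_riemann_curvature_operator} negates the first two terms and, since $\ldera{[Y,X]} = -\ldera{[X,Y]}$ by the $\R$-linearity of $\lder$ (Theorem \ref{thm_l_cov_der_props}\eqref{thm_l_cov_der_props_linear}), also negates the third, so $\curv XY = -\curv YX$. For the remaining identities the plan is to transcribe the classical arguments, but routing every contraction through the inner product $\annihg$ and the metric property of $\kosz$, and never through an inverse metric. Throughout I note that $\lderb XZ$, $\lderb YZ$, etc.\ are radical-annihilator since $(M,g)$ is {\semireg}, hence {\rstationary}, so each pairing $\annihprod{\cdot,\cdot}$ appearing below is defined.

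For property \eqref{thm_curv_symm_zt} I would argue by polarization: since $R$ is additive and $\R$-linear in its last two slots (Theorem \ref{thm_riemann_curvature_semi_regular}), antisymmetry there is equivalent to $R(X,Y,Z,Z)=0$ for every $Z$. Expanding $R(X,Y,Z,Z)$ via \eqref{eq_riemann_curvature_explicit} and applying Definition \ref{def_cov_der_covect} gives
\begin{equation*}
\derc X{{\ldera Y}Z}Z = X\bigl(\lderc YZZ\bigr) - \annihprod{\lderb XZ,\lderb YZ},
\end{equation*}
and the analogous identity with $X,Y$ interchanged. Because $\annihg$ is a symmetric inner product, $\annihprod{\lderb XZ,\lderb YZ}=\annihprod{\lderb YZ,\lderb XZ}$, so these two pairing terms cancel in the difference. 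Using the metric property of the Koszul form (Theorem \ref{thm_Koszul_form_props}\eqref{thm_Koszul_form_props_commutYZ}) in the form $\lderc YZZ=\kosz(Y,Z,Z)=\tfrac{1}{2}Y\metric{Z,Z}$, and likewise $\lderc{[X,Y]}ZZ=\tfrac{1}{2}[X,Y]\metric{Z,Z}$, the surviving terms collapse to $\tfrac{1}{2}\bigl(XY-YX-[X,Y]\bigr)\metric{Z,Z}=0$. This is precisely where metric compatibility does the work played classically by $\der g=0$.

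For the first Bianchi identity \eqref{thm_curv_symm_xyz} I would sum $\curv XY Z+\curv YZ X+\curv ZX Y$ and regroup. The six $\der$-terms fall into three pairs of the form $\dera X{(\lderb YZ-\lderb ZY)}$; the torsion-free identity $\lderb YZ-\lderb ZY=[Y,Z]^\flat$ (Theorem \ref{thm_l_cov_der_props}\eqref{thm_l_cov_der_props_commutXY}) together with the fact that $\der$ commutes with lowering (Theorem \ref{thm_cov_der_covect_props}\eqref{thm_cov_der_covect_props_flat_commut}) turns each pair into $\lderb X{[Y,Z]}$. Combining these with the three $\ldera{[\cdot,\cdot]}$ terms and applying the torsion-free identity once more yields
\begin{equation*}
\sum_{\cyclic}\bigl(\lderb X{[Y,Z]}-\lderb{[Y,Z]}X\bigr)=\bigl([X,[Y,Z]]+[Y,[Z,X]]+[Z,[X,Y]]\bigr)^\flat,
\end{equation*}
which vanishes by the Jacobi identity for vector fields.

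Finally, property \eqref{thm_curv_symm_xy_zt} is a purely algebraic consequence of \eqref{thm_curv_symm_xy}, \eqref{thm_curv_symm_zt} and \eqref{thm_curv_symm_xyz}: writing the first Bianchi identity for the four cyclic permutations of $(X,Y,Z,T)$ and adding them, the antisymmetries \eqref{thm_curv_symm_xy} and \eqref{thm_curv_symm_zt} cancel all but two pairs of terms, reducing the sum to $2\curv XY(Z,T)=2\curv ZT(X,Y)$. Since this step invokes neither metric compatibility nor smoothness, the standard argument applies verbatim. I expect the main obstacle to be property \eqref{thm_curv_symm_zt}: one must verify that the two second-derivative terms really do pair through the \emph{symmetric} form $\annihg$ so as to cancel, and that the surviving expression is a genuine iterated action of vector fields on the scalar $\metric{Z,Z}$ — it is here that the degeneracy of $g$ is absorbed, by working entirely with $\annihprod{\cdot,\cdot}$ and $\kosz$ in place of a (nonexistent) inverse metric.
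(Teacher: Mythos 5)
Your proposal is correct and follows essentially the same route as the paper's proof: for \eqref{thm_curv_symm_zt} you polarize to $R(X,Y,Z,Z)=0$, expand via Definition \ref{def_cov_der_covect}, and cancel the second-derivative terms through the symmetry of $\annihprod{\cdot,\cdot}$ together with the metric property of $\kosz$, exactly as the paper does; for \eqref{thm_curv_symm_xyz} you use the same cyclic regrouping, torsion-free identity, flat-commutation, and Jacobi identity, and for \eqref{thm_curv_symm_xy_zt} the same four-fold application of the first Bianchi identity. There are no gaps.
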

\begin{proof}
\eqref{thm_curv_symm_xy} Follows from the Definition \ref{def_riemann_curvature_operator}:
\begin{equation*}
\begin{array}{lll}
	\curv XY Z&=& \dera X {\ldera Y}Z - \dera Y {\ldera X}Z - \ldera {[X,Y]}Z \\
	&=&-\curv YX Z
\end{array}
\end{equation*}

\eqref{thm_curv_symm_zt}
This is equivalent to
\begin{equation}
	\curv XY(V,V)=0
\end{equation}
for any $V\in\fivect M$.
From the property of the lower covariant derivative of being metric (Theorem \ref{thm_l_cov_der_props}, property \eqref{thm_l_cov_der_props_flinearZ}) it follows that
\begin{equation*}
\lderc{[X,Y]}VV=\frac 1 2[X,Y]\metric{V,V}
\end{equation*}
and
\begin{equation*}
X(\lderc YVV) = \dsfrac 1 2 XY\metric{V,V}.
\end{equation*}
From the Definition \ref{def_cov_der_covect} of the covariant derivative of $1$-forms we obtain that
\begin{equation}
	\derc X {\lderb YV} V  = X\left(\lderc Y V V\right) - \annihprod{\lderb X V,\lderb YV}.
\end{equation}
By combining them we get
\begin{equation}
	\derc X {\lderb YV} V  = \dsfrac 1 2 XY\metric{V,V} - \annihprod{\lderb X V,\lderb YV}.
\end{equation}
Therefore,
\begin{equation*}
\begin{array}{lll}
	\curv XY(V,V) 
	&=& \derc X {{\ldera Y}V}V - \derc Y {{\ldera X}V}V - \lderc {[X,Y]}VV \\
	&=& \dsfrac 1 2X\left(\lderc Y V V\right) - \annihprod{\lderb X V,\lderb YV} \\
	&& - \dsfrac 1 2Y\left(\lderc X V V\right) + \annihprod{\lderb Y V,\lderb XV} \\
	&& - \frac 1 2[X,Y]\metric{V,V} = 0\\
\end{array}
\end{equation*}

\eqref{thm_curv_symm_xyz}
As the proof of this identity usually goes, we define the cyclic sum for any $F:\fivect M^3\to\fiformk 1 M$ by
\begin{equation}
\begin{array}{l}
	\sum_{\cyclic}F(X,Y,Z):=F(X,Y,Z)+F(Y,Z,X)+F(Z,X,Y)
\end{array}
\end{equation}
and observe that it doesn't change at cyclic permutations of $X,Y,Z$. Then, from the properties of the lower covariant derivative and from Jacobi's identity,
\begin{equation*}
\begin{array}{lll}
\sum_{\cyclic}\curv XY Z &=& \sum_{\cyclic}\dera X {\ldera Y}Z - \sum_{\cyclic}\dera Y {\ldera X}Z - \sum_{\cyclic}\ldera {[X,Y]}Z\\
&=& \sum_{\cyclic}\dera X {\ldera Y}Z - \sum_{\cyclic}\dera X {\ldera Z}Y - \sum_{\cyclic}\ldera {[X,Y]}Z\\
&=& \sum_{\cyclic}\dera X \left({\ldera Y}Z - {\ldera Z}Y\right) - \sum_{\cyclic}\ldera {[X,Y]}Z\\
&=& \sum_{\cyclic}\dera X [Y,Z]^\flat - \sum_{\cyclic}\ldera {[X,Y]}Z\\
&=& \sum_{\cyclic}\dera X^\flat [Y,Z] - \sum_{\cyclic}\ldera {[Y,Z]}X\\
&=& \sum_{\cyclic}[X,[Y,Z]]^\flat = 0.\\
\end{array}
\end{equation*}

To show \eqref{thm_curv_symm_xy_zt} we apply  \eqref{thm_curv_symm_xyz} four times (as in the usual proof of the properties of the curvature):
\begin{equation*}
\begin{array}{lllllll}
	\curv XY(Z,T) &+& \curv YZ(X,T) &+& \curv ZX(Y,T) &=& 0 \\
	\curv YZ(T,X) &+& \curv ZT(Y,X) &+& \curv TY(Z,X) &=& 0 \\
	\curv ZT(X,Y) &+& \curv TX(Z,Y) &+& \curv XZ(T,Y) &=& 0 \\
	\curv TX(Y,Z) &+& \curv XY(T,Z) &+& \curv YT(X,Z) &=& 0 \\
\end{array}
\end{equation*}
then sum up, divide by $2$ and get:
\begin{equation*}
\curv XY(Z,T) = \curv ZT(X,Y).
\end{equation*}
\end{proof}

\begin{corollary}[see \citep{Kup87b}{270}]
\label{thm_curvature_tensor_radical}
For any $X,Y,Z\in \fivect{M}$ and $W\in\fivectnull{M}$, the Riemann curvature tensor $R$ satisfies
\begin{equation}
	R(W,X,Y,Z) = R(X,W,Y,Z) = R(X,Y,W,Z) = R(X,Y,Z,W) = 0.
\end{equation}
\end{corollary}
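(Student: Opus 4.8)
The plan is to reduce everything to a single slot — the fourth — and obtain that case directly from the construction of the curvature as a {\rannih} $1$-form; the remaining three cases then follow mechanically from the symmetries of Proposition \ref{thm_curv_symm}.

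First I would show that for a {\semireg} manifold the $1$-form $\curv XY Z$ lies in $\annihforms M$. By Definition \ref{def_riemann_curvature_operator},
\begin{equation*}
\curv XY Z = \dera X {\ldera Y}Z - \dera Y {\ldera X}Z - \ldera {[X,Y]}Z,
\end{equation*}
and each term is {\rannih}: the forms $\ldera Y Z$ and $\ldera X Z$ belong to $\srformsk 1 M$ by Definition \ref{def_semi_regular_semi_riemannian}, so $\dera X {\ldera Y}Z$ and $\dera Y {\ldera X}Z$ lie in $\annihforms M$ by the defining property of $\srformsk 1 M$ (Definition \ref{def_cov_der_smooth}); and $\ldera {[X,Y]}Z\in\annihforms M$ because a {\semireg} manifold is {\rstationary} (Remark \ref{rem_rad_stat_lower_der}). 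Since $\annihforms M$ is closed under $\R$-linear combinations, $\curv XY Z\in\annihforms M$. As a {\rannih} $1$-form annihilates every radical vector, for $W\in\fivectnull M$ we get at once
\begin{equation*}
R(X,Y,Z,W) = (\curv XY Z)(W) = 0,
\end{equation*}
which settles the fourth slot.

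Finally I would propagate this to the other three slots using only the symmetries. By antisymmetry in the last pair (Proposition \ref{thm_curv_symm}, property \eqref{thm_curv_symm_zt}), $R(X,Y,W,Z) = -R(X,Y,Z,W) = 0$. By the pair symmetry (property \eqref{thm_curv_symm_xy_zt}), $R(X,W,Y,Z) = R(Y,Z,X,W) = 0$, handling the second slot. And by antisymmetry in the first pair (property \eqref{thm_curv_symm_xy}), $R(W,X,Y,Z) = -R(X,W,Y,Z) = 0$, handling the first slot.

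The only real content is the first step; everything after it is bookkeeping with the four symmetries. The point to get right is that the {\semireg} condition is exactly what guarantees $\dera X {\ldera Y}Z\in\annihforms M$, rather than merely a form defined on the constant-signature locus, so that $\curv XY Z$ is a globally {\rannih} $1$-form and not just one on a dense subset of $M$; without this the pointwise vanishing against $W$ could fail on $\mansigvar M$.
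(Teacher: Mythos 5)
Your proposal is correct and matches the paper's own proof in essence: the paper likewise observes that $\dera X {\ldera Y}Z \in \annihforms M$ (Remark \ref{rem_semi_regular_semi_riemannian}) and $\lderb {[X,Y]} Z\in\annihforms M$ (Remark \ref{rem_rad_stat_lower_der}), concludes $R(X,Y,Z,W)=0$, and then extends to the other slots via the symmetries of Proposition \ref{thm_curv_symm}. The only differences are cosmetic: you re-derive from Definitions \ref{def_semi_regular_semi_riemannian} and \ref{def_cov_der_smooth} what the paper cites as a remark, and you use the antisymmetry in the last pair where the paper chains properties \eqref{thm_curv_symm_xy} and \eqref{thm_curv_symm_xy_zt}.
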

\begin{proof}
From the Remark \ref{rem_semi_regular_semi_riemannian}, $\dera X {\ldera Y}Z \in \annihforms M$, and from the Remark \ref{rem_rad_stat_lower_der}, $\lderb X Y\in\annihforms M$, for any $X,Y,Z\in\fivect{M}$. Therefore, $R(X,Y,Z,W)=0$. From the symmetry properties \eqref{thm_curv_symm_xy} and \eqref{thm_curv_symm_xy_zt} from Theorem \ref{thm_curv_symm}, this property extends to all other slots of the Riemann curvature tensor.
\end{proof}

\begin{corollary}
\label{thm_curv_annih}
Let $(M,g)$ be a {\semireg} {\semiriem} manifold. Then, for any $X,Y\in\fivect M$,
	$	\curv XY\in\annihformsk 2 M$ ($\curv XY$ is a {\rannih}). 
\end{corollary}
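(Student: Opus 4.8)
The plan is to assemble three facts that have already been established for a {\semireg} {\semiriem} manifold: that $\curv XY$ is a smooth $(0,2)$ tensor, that it is antisymmetric, and that it vanishes as soon as one of its arguments is radical. The conclusion $\curv XY\in\annihformsk 2 M$ will then follow immediately from the characterization of {\rannih} tensors in Proposition \ref{thm_radical_annihilator_covariant_index}.

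First I would recall that by Theorem \ref{thm_riemann_curvature_semi_regular} the Riemann curvature $R\in\tensors 0 4 M$ is a smooth tensor field, so that $\curv XY$, defined via $\curv XY(Z,T)=R(X,Y,Z,T)$, is a smooth $(0,2)$ tensor field on $M$. By Proposition \ref{thm_curv_symm}, property \eqref{thm_curv_symm_zt}, it is antisymmetric in its two arguments, hence a genuine $2$-form.

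The essential step is to invoke Corollary \ref{thm_curvature_tensor_radical}, which gives $R(X,Y,W,Z)=R(X,Y,Z,W)=0$ whenever $W\in\fivectnull M$. Written in terms of $\curv XY$, this says $\curv XY(W,Z)=\curv XY(Z,W)=0$ for every radical vector field $W$; that is, the contraction of $\curv XY$ with a radical vector in either covariant slot vanishes. By Proposition \ref{thm_radical_annihilator_covariant_index} this is exactly the condition for $\curv XY$ to be {\rannih} in both of its covariant slots, and therefore $\curv XY\in\annihformsk 2 M$.

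There is essentially no obstacle here: all the real work was done in deriving the symmetries of $R$ (Proposition \ref{thm_curv_symm}) and its vanishing on the radical (Corollary \ref{thm_curvature_tensor_radical}). The only point worth noting is that, because $\curv XY$ is already antisymmetric, {\rannih}ation in a single slot forces it in the other, so one of the two vanishing statements in Corollary \ref{thm_curvature_tensor_radical} would already suffice for this argument.
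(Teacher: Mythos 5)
Your proof is correct and takes essentially the same approach as the paper, whose entire proof is the citation of Corollary \ref{thm_curvature_tensor_radical}; you have simply made explicit the supporting facts the paper leaves implicit (smoothness via Theorem \ref{thm_riemann_curvature_semi_regular}, antisymmetry via Proposition \ref{thm_curv_symm}, and the characterization of {\rannih} slots in Proposition \ref{thm_radical_annihilator_covariant_index}). Your closing remark that antisymmetry makes one of the two vanishing statements in Corollary \ref{thm_curvature_tensor_radical} redundant is a correct minor refinement.
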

\begin{proof}
Follows from the Corollary \ref{thm_curvature_tensor_radical}.
\end{proof}

\subsection{Ricci curvature tensor and scalar curvature}
\label{s_ricci_tensor_scalar}

In {\nondeg} {\semiriem} geometry, the Ricci tensor is obtained by tracing the Riemann curvature, and the scalar curvature by tracing the Ricci tensor \cfeg{ONe83}{87--88}. In the degenerate case, an invariant contraction can be performed only on {\rannih} slots. Fortunately, this is the case of the Riemann tensor even in the case when the metric is degenerate (Corollary \ref{thm_curvature_tensor_radical}), so it is possible to define the Ricci tensor as:

\begin{definition}
\label{def_ricci_curvature_tensor}
Let $(M,g)$ be a {\rstationary} singular {\semiriem} manifold with constant signature. The \textit{Ricci curvature tensor} is defined as the covariant contraction of the Riemann curvature tensor
\begin{equation}
	\ric(X,Y):=R(X,\cocontr,Y,\cocontr)
\end{equation}
for any $X,Y\in\fivect{M}$.
\end{definition}

The symmetry of the Ricci tensor works just like in the {\nondeg} case \cfeg{ONe83}{87}:

\begin{proposition}
The Ricci curvature tensor on a {\rstationary} singular {\semiriem} manifold with constant signature is symmetric:
\begin{equation}
	\ric(X,Y)=\ric(Y,X)
\end{equation}
for any $X,Y\in\fivect{M}$.
\end{proposition}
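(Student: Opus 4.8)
The plan is to reduce the symmetry of $\ric$ to the pair-exchange symmetry of the Riemann tensor (property \eqref{thm_curv_symm_xy_zt} of Proposition \ref{thm_curv_symm}), exploiting the fact that the two covariant slots being contracted carry the \emph{same} contraction variable.

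First I would recall that, by Corollary \ref{thm_curv_annih}, the Riemann tensor $R$ is {\rannih} in all of its slots, so the covariant contraction $R(X,\cocontr,Y,\cocontr)$ defining $\ric(X,Y)$ is genuinely well-defined. To make the slot bookkeeping transparent I would then rewrite this contraction in an orthogonal frame, using Theorem \ref{thm_contraction_orthogonal} together with the frame-independence established in Remark \ref{rem_contraction_orthonormal_invariant}: choosing an orthogonal frame $(E_a)_{a=1}^n$ with $E_1,\ldots,E_{n-\rank g}\in\fivectnull{M}$, one has
\begin{equation*}
\ric(X,Y) = \sum_{a=n-\rank g+1}^n \dsfrac{1}{\metric{E_a,E_a}} R(X,E_a,Y,E_a).
\end{equation*}

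Next I would apply the pair-exchange symmetry $R(X,Y,Z,T)=R(Z,T,X,Y)$ to each summand under the substitution $(X,E_a,Y,E_a)$. Since both contracted arguments are the identical frame vector $E_a$, this exchange sends $R(X,E_a,Y,E_a)$ to $R(Y,E_a,X,E_a)$; that is, it swaps exactly $X$ and $Y$ while leaving the summation variable $E_a$ and the scalar factor $1/\metric{E_a,E_a}$ untouched. Summing over $a$ therefore yields $\ric(X,Y)=\ric(Y,X)$ term by term.

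The only point requiring care is ensuring the trace is taken on slots where $R$ is genuinely {\rannih}, which is precisely the content of Corollary \ref{thm_curv_annih}; equivalently, in the index-free formulation one invokes the symmetry of $\annihg$ to relabel the two dummy contraction indices, and this is what converts the pair-exchange symmetry into the desired $X\leftrightarrow Y$ symmetry. I do not expect a serious obstacle here: all the work lies in arranging the slots so that property \eqref{thm_curv_symm_xy_zt} applies cleanly, and the coincidence of the two contracted arguments makes the conclusion immediate.
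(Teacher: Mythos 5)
Your proof is correct and follows essentially the same route as the paper, whose entire argument is to invoke the pair-exchange symmetry $R(X,Y,Z,T)=R(Z,T,X,Y)$ from Proposition \ref{thm_curv_symm} and conclude $\ric(X,Y)=\ric(Y,X)$; your orthogonal-frame expansion via Theorem \ref{thm_contraction_orthogonal} and the well-definedness remarks are just a more explicit unpacking of that one step. Note only that, like the paper itself, you cite symmetry and {\rannih} properties stated for {\semireg} manifolds while the proposition assumes only a {\rstationary} manifold of constant signature, where the same arguments go through since smoothness issues at signature change do not arise.
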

\begin{proof}
The Proposition \ref{thm_curv_symm} states that $R(X,Y,Z,T)=R(Z,T,X,Y)$ for any $X,Y,Z,T\in\fivect{M}$. Therefore, $\ric(X,Y)=\ric(Y,X)$.
\end{proof}

The scalar curvature is obtained from the Ricci tensor like in the {\nondeg} case \cfeg{ONe83}{88}:

\begin{definition}
\label{def_scalar_curvature}
Let $(M,g)$ be a {\rstationary} singular {\semiriem} manifold with constant signature. The \textit{scalar curvature} is defined as the covariant contraction of the Ricci curvature tensor
\begin{equation}
	s:=\ric(\cocontr,\cocontr).
\end{equation}
\end{definition}

\begin{remark}
The Ricci and the scalar curvatures are smooth for the case of {\rstationary} singular {\semiriem} manifolds having the metric with constant signature. For {\semireg} {\semiriem} manifolds, the Ricci and scalar curvatures are smooth in the regions of constant curvature, and become in general divergent as we approach the points where the signature changes.
\end{remark}

\section{Curvature of {\semireg} {\semiriem} manifolds II}
\label{s_riemann_curvature_ii}

This section contains some complements on the Riemann curvature tensor of {\semireg} {\semiriem} manifolds. A useful formula of this curvature in terms of the Koszul form is provided in \sref{s_riemann_curvature_koszul_formula}.

In the subsection \sref{s_koszul_deriv_curv_funct} we recall some results from \cite{Kup87b} concerning the (non-unique) Koszul derivative $\der$ and the associated curvature function $R_\der$, and show that $\metric{R_\der(\_,\_)\_,\_}$ coincides with that of the Riemann curvature tensor given in this article in \sref{s_riemann_curvature}.

\subsection{Riemann curvature in terms of the Koszul form}
\label{s_riemann_curvature_koszul_formula}

\begin{proposition}
\label{thm_riemann_curvature_tensor_koszul_formula}
For any vector fields $X,Y,Z,T\in\fivect{M}$ on a {\semireg} {\semiriem} manifold $(M,g)$:
\begin{equation}
\begin{array}{lll}
	R(X,Y,Z,T) &=& X\left(\lderc Y Z T\right) - Y\left(\lderc X Z T\right) - \lderc {[X,Y]}ZT \\
&& + \annihprod{\lderb XZ,\lderb Y T} - \annihprod{\lderb YZ,\lderb X T} \\
\end{array}
\end{equation}
and, alternatively,
\begin{equation}
\label{eq_riemann_curvature_tensor_koszul_formula}
\begin{array}{lll}
	R(X,Y,Z,T)&=& X \kosz(Y,Z,T) - Y \kosz(X,Z,T) - \kosz([X,Y],Z,T)\\
	&& + \kosz(X,Z,\cocontr)\kosz(Y,T,\cocontr) - \kosz(Y,Z,\cocontr)\kosz(X,T,\cocontr)
\end{array}
\end{equation}
\end{proposition}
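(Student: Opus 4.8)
The plan is to expand the Riemann curvature starting from its explicit form \eqref{eq_riemann_curvature_explicit},
\[
	R(X,Y,Z,T) = \derc X{\lderb YZ}T - \derc Y{\lderb XZ}T - \lderc{[X,Y]}ZT,
\]
and to rewrite the first two terms by applying the defining formula for the covariant derivative of a $1$-form from Definition \ref{def_cov_der_covect}. Since $(M,g)$ is {\semireg}, it is in particular {\rstationary}, so $\lderb YZ$ and $\lderb XZ$ are {\rannih} $1$-forms and all the contractions appearing below are well-defined.

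First I would apply Definition \ref{def_cov_der_covect} with $\omega=\lderb YZ$, obtaining
\[
	\derc X{\lderb YZ}T = X\left(\lderc YZT\right) - \annihprod{\lderb XT,\lderb YZ},
\]
and symmetrically with $\omega=\lderb XZ$. Substituting both into the expression above gives
\[
	R(X,Y,Z,T) = X\left(\lderc YZT\right) - Y\left(\lderc XZT\right) - \lderc{[X,Y]}ZT - \annihprod{\lderb XT,\lderb YZ} + \annihprod{\lderb YT,\lderb XZ}.
\]
The first displayed identity of the proposition then follows at once from the symmetry of the inner product $\annihprod{\cdot,\cdot}$ (inherited from the symmetry of $g$), which turns $-\annihprod{\lderb XT,\lderb YZ}$ into $-\annihprod{\lderb YZ,\lderb XT}$ and $\annihprod{\lderb YT,\lderb XZ}$ into $\annihprod{\lderb XZ,\lderb YT}$.

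To obtain the second formula \eqref{eq_riemann_curvature_tensor_koszul_formula}, I would simply translate every lower covariant derivative back into the Koszul form through $\lderc XYZ = \kosz(X,Y,Z)$ (Definition \ref{def_l_cov_der}). For the quadratic terms I would use that, as $1$-forms, $\lderb XZ = \kosz(X,Z,\_)$ and $\lderb YT = \kosz(Y,T,\_)$, so that $\annihprod{\lderb XZ,\lderb YT}$ is by its very definition the covariant contraction $\kosz(X,Z,\cocontr)\kosz(Y,T,\cocontr)$, and likewise for the remaining quadratic term.

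I do not expect a genuine obstacle: once the definitions are unwound, the computation is pure bookkeeping. The only point deserving care is the well-definedness and smoothness of the contractions $\annihprod{\lderb XZ,\lderb YT}$. These are well-defined because $(M,g)$ is {\rstationary}, which forces the lower covariant derivatives to be {\rannih}; and they are genuine smooth scalar fields on all of $M$ — rather than merely on the open dense set of constant signature — precisely because $(M,g)$ is {\semireg}, via Proposition \ref{thm_sr_cocontr_kosz}, which guarantees $\kosz(X,Z,\cocontr)\kosz(Y,T,\cocontr)\in\fiscal M$.
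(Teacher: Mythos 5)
Your proof is correct and takes essentially the same route as the paper's: both expand $\derc X{\lderb YZ}T$ and $\derc Y{\lderb XZ}T$ via Definition \ref{def_cov_der_covect}, substitute into the defining expression for $R$, use the symmetry of $\annihprod{\cdot,\cdot}$ to reorder the quadratic terms, and recover \eqref{eq_riemann_curvature_tensor_koszul_formula} by rewriting the lower covariant derivatives as Koszul forms. Your closing remark on why the contractions are well-defined ({\rstationary}) and smooth on all of $M$ (via Proposition \ref{thm_sr_cocontr_kosz}) is a point the paper leaves implicit, and it is accurate.
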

\begin{proof}
From the Definition \ref{def_cov_der_covect} of the covariant derivative of $1$-forms we obtain that
\begin{equation}
	\derc X {\lderb YZ} T  = X\left(\lderc Y Z T\right) - \annihprod{\lderb X T,\lderb YZ},
\end{equation}
therefore
\begin{equation}
\begin{array}{lll}
	R(X,Y,Z,T) &=& \derc X {{\ldera Y}Z}T - \derc Y {{\ldera X}Z}T -
 \lderc {[X,Y]}ZT \\
&=& X\left(\lderc Y Z T\right) - Y\left(\lderc X Z T\right) - \lderc {[X,Y]}ZT \\
&& + \annihprod{\lderb XZ,\lderb Y T} - \annihprod{\lderb YZ,\lderb X T} \\
\end{array}
\end{equation}
for any vector fields $X,Y,Z,T\in\fivect{M}$.
The second formula \eqref{eq_riemann_curvature_tensor_koszul_formula} follows from the definition of the lower derivative of vector fields.
\end{proof}

\begin{remark}
In a coordinate basis, the components of the Riemann curvature tensor are given by
\begin{equation}
\label{eq_riemann_curvature_tensor_coord}
	R_{abcd}= \partial_a \kosz_{bcd} - \partial_b \kosz_{acd} + \annihg^{st}(\kosz_{acs}\kosz_{bdt} - \kosz_{bcs}\kosz_{adt}).
\end{equation}\end{remark}
\begin{proof}
\begin{equation}
\begin{array}{lll}
	R_{abcd}&:=& R(\partial_a,\partial_b,\partial_c,\partial_d)\\
	&=& \partial_a \kosz(\partial_b,\partial_c,\partial_d) - \partial_b \kosz(\partial_a,\partial_c,\partial_d) - \kosz([\partial_a,\partial_b],\partial_c,\partial_d)\\
	&& + \kosz(\partial_a,\partial_c,\cocontr)\kosz(\partial_b,\partial_d,\cocontr) - \kosz(\partial_b,\partial_c,\cocontr)\kosz(\partial_a,\partial_d,\cocontr)\\
	&=& \partial_a \kosz_{bcd} - \partial_b \kosz_{acd} + \annihg^{st}(\kosz_{acs}\kosz_{bdt} - \kosz_{bcs}\kosz_{adt})
\end{array}
\end{equation}
\end{proof}

\subsection{Relation with Kupeli's curvature function}
\label{s_koszul_deriv_curv_funct}

Through the work of Demir Kupeli \cite{Kup87b} we have seen that for a {\rstationary} singular {\semiriem} manifold (with constant signature) $(M,g)$ there is always a Koszul derivative $\der$, from whose curvature function $R_\der$ we can construct a tensor field $\metric{R_\der(\_,\_)\_,\_}$. We may wonder how is $\metric{R_\der(\_,\_)\_,\_}$ related to the Riemann curvature tensor from the Definition \ref{def_riemann_curvature}. We will see that they coincide for a {\rstationary} singular {\semiriem} manifold.

\begin{definition}[Koszul derivative, \cf Kupeli \citep{Kup87b}{261}]
\label{def_Koszul_derivative}
A \textit{Koszul derivative} on a {\rstationary} {\semiriem} manifold with constant signature is an operator $\der:\fivect M\times\fivect M\to \fivect M$ which satisfies the \textit{Koszul formula}
\begin{equation}
\label{eq_Koszul_formula}
\begin{array}{llll}
	\metric{\der_X Y,Z} &=& \kosz(X,Y,Z).
\end{array}
\end{equation}
\end{definition}

\begin{remark}[\cf Kupeli \citep{Kup87b}{262}]
The Koszul derivative corresponds, for the {\nondeg} case, to the Levi-Civita connection.
\end{remark}

\begin{definition}[Curvature function, \cf Kupeli \citep{Kup87b}{266}]
\label{def_curvature_function}
The \textit{curvature function} $R_\der : \fivect M\times\fivect M\times\fivect M\to \fivect M$ of a Koszul derivative $\der$ on a singular {\semiriem} manifold with constant signature $(M,g)$ is defined by
\begin{equation}
\label{eq_curvature_function}
R_\der(X,Y)Z:=\der_X\der_Y Z - \der_Y \der_X Z - \der_{[X,Y]}Z.
\end{equation}
\end{definition}

\begin{remark}
In \citep{Kup87b}{266-268} it is shown that $\metric{R_\der(\_,\_)\_,\_}\in\tensors 0 4 {M}$ and it has the same symmetry properties as the Riemann curvature tensor of a Levi-Civita connection.
\end{remark}

\begin{theorem}
Let $(M,g)$ be a {\rstationary} singular {\semiriem} manifold with constant signature, and $\der$ a Koszul derivative on $M$. The Riemann curvature tensor is related to the curvature function by
\begin{equation}
	\metric{R_\der(X,Y)Z,T} = R(X,Y,Z,T)
\end{equation}
for any $X,Y,Z,T\in\fivect{M}$.
\end{theorem}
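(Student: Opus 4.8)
The plan is to compute the scalar $\metric{R_\der(X,Y)Z,T}$ directly from the definition of the curvature function and to show that it reproduces, term by term, the expression for $R(X,Y,Z,T)$ already established in Proposition \ref{thm_riemann_curvature_tensor_koszul_formula}. Expanding $R_\der(X,Y)Z = \der_X\der_Y Z - \der_Y\der_X Z - \der_{[X,Y]}Z$ and pairing with $T$ through the metric, the problem reduces to evaluating the three scalars $\metric{\der_X\der_Y Z,T}$, $\metric{\der_Y\der_X Z,T}$ and $\metric{\der_{[X,Y]}Z,T}$. The last one is immediate: by the Koszul formula (Definition \ref{def_Koszul_derivative}) it equals $\kosz([X,Y],Z,T)$, matching the corresponding term of $R$.

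The heart of the argument is the treatment of the second-order term $\metric{\der_X\der_Y Z,T}$. Applying the Koszul formula with second argument $\der_Y Z$ gives $\metric{\der_X\der_Y Z,T} = \kosz(X,\der_Y Z,T)$. I would then invoke the metric property of the Koszul form (Theorem \ref{thm_Koszul_form_props}, property \eqref{thm_Koszul_form_props_commutYZ}) to swap the last two slots,
\begin{equation*}
\kosz(X,\der_Y Z,T) = X\metric{\der_Y Z,T} - \kosz(X,T,\der_Y Z),
\end{equation*}
and recognize $\metric{\der_Y Z,T} = \kosz(Y,Z,T)$ again by the Koszul formula, so the first piece is exactly $X\kosz(Y,Z,T)$. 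For the remaining term, $\kosz(X,T,\der_Y Z) = (\lderb X T)(\der_Y Z)$, where the $1$-form $\lderb X T = \kosz(X,T,\_)$ is radical-annihilator because $(M,g)$ is {\rstationary} (Remark \ref{rem_rad_stat_lower_der}). This is the key point: Lemma \ref{thm_contraction_with_metric} lets me rewrite the evaluation of a radical-annihilator $1$-form on an arbitrary vector field as a covariant contraction, $(\lderb X T)(\der_Y Z) = \kosz(X,T,\cocontr)\metric{\der_Y Z,\cocontr}$, and since $\metric{\der_Y Z,\_} = \kosz(Y,Z,\_) = \lderb Y Z$ is also radical-annihilator, this equals $\annihprod{\lderb X T,\lderb Y Z}$. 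Thus $\metric{\der_X\der_Y Z,T} = X\kosz(Y,Z,T) - \annihprod{\lderb X T,\lderb Y Z}$, and likewise with $X,Y$ interchanged.

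Assembling the three pieces and using the symmetry of the covariant contraction $\annihprod{\cdot,\cdot}$ to align the quadratic terms, the result coincides exactly with the first displayed formula of Proposition \ref{thm_riemann_curvature_tensor_koszul_formula}, which is $R(X,Y,Z,T)$; this finishes the proof. The main obstacle is precisely the step where the vector field $\der_Y Z$ sits inside a Koszul slot: because the metric is degenerate, $\der_Y Z$ is not uniquely determined by the Koszul formula (it is ambiguous up to a radical vector field), so one cannot manipulate it naively. What saves the computation is that $\der_Y Z$ only ever enters through the pairing $\metric{\der_Y Z,\_}$ and through radical-annihilator $1$-forms, on which the radical ambiguity is invisible (Corollary \ref{thm_Koszul_null_props_rad_stat}); being {\rstationary} is exactly the hypothesis that makes Lemma \ref{thm_contraction_with_metric} applicable and thereby converts every such evaluation into a well-defined covariant contraction.
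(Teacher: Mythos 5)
Your proof is correct and follows essentially the same route as the paper: expand $R_\der$, use the metric property of the Koszul form to turn $\metric{\der_X\der_Y Z,T}$ into $X\kosz(Y,Z,T)$ minus a pairing of first derivatives, convert that pairing into the covariant contraction $\annihprod{\lderb XT,\lderb YZ}$ via Lemma \ref{thm_contraction_with_metric}, and match the result with Proposition \ref{thm_riemann_curvature_tensor_koszul_formula}. Your extra remark on why the radical ambiguity of $\der_Y Z$ is invisible is a sound observation, implicit in the paper's use of the same lemmas.
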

\begin{proof}
From Theorem \ref{thm_Koszul_form_props} and Definition \ref{def_curvature_function}, applying the property of contraction with the metric from Lemma \ref{thm_contraction_with_metric} and the Koszul formula for the Riemann curvature tensor \eqref{eq_riemann_curvature_tensor_koszul_formula}, we obtain
\begin{equation*}
\begin{array}{lll}
\metric{R_\der(X,Y)Z,T} &=&\metric{\der_X \der_Y Z,T} - \metric{\der_Y \der_X Z,T} - \metric{\der_{[X,Y]}Z,T}\\
&=&X \metric{\der_YZ,T} - \metric{\der_Y Z, \der_X T} \\
	&& - Y \metric{\der_X Z,T} + \metric{\der_X Z, \der_Y T} - \metric{\der_{[X,Y]} Z,T} \\
	&=& X \kosz(Y,Z,T) - \kosz(Y,Z,\cocontr)\kosz(X,T,\cocontr) \\
	&& - Y \kosz(X,Z,T) + \kosz(X,Z,\cocontr)\kosz(Y,T,\cocontr)\\
	&& - \kosz([X,Y],Z,T) \\
&=& R(X,Y,Z,T)
\end{array}
\end{equation*}
\end{proof}

\section{Examples of {\semireg} {\semiriem} manifolds}
\label{s_semi_reg_semi_riem_man_example}

\subsection{Diagonal metric}
\label{s_semi_reg_semi_riem_man_example_diagonal}

Let $(M,g)$ be a singular {\semiriem} manifold with variable signature having the property that for each point $p\in M$ there is a local coordinate system around $p$ in which the metric takes a diagonal form $g=\diag{(g_{11},\ldots,g_{nn})}$. According to equation \eqref{eq_Koszul_form_coord}, $2\kosz_{abc}=\partial_a g_{bc} + \partial_b g_{ca} - \partial_c g_{ab}$, but since $g$ is diagonal, we have only the following possibilities: $\kosz_{baa} = \kosz_{aba} = -\kosz_{aab} = \frac 1 2\partial_b g_{aa}$, for $a\neq b$, and $\kosz_{aaa} = \frac 1 2\partial_a g_{aa}$.

The manifold $(M,g)$ is {\rstationary} if and only if whenever $g_{aa}=0$, $\partial_b g_{aa} = \partial_a g_{bb} = 0$.

According to Proposition $\ref{thm_sr_cocontr_kosz}$, the manifold $(M,g)$ is {\semireg} if and only if
\begin{equation}
\label{eq_diag_g_contr_kosz}
	\sum_{\substack{s\in\{1,\ldots,n\} \\ g_{ss}\neq 0}} \dsfrac{\partial_a g_{ss}\partial_b g_{ss}}{g_{ss}},
	\sum_{\substack{s\in\{1,\ldots,n\} \\ g_{ss}\neq 0}} \dsfrac{\partial_s g_{aa}\partial_s g_{bb}}{g_{ss}},
	\sum_{\substack{s\in\{1,\ldots,n\} \\ g_{ss}\neq 0}} \dsfrac{\partial_a g_{ss}\partial_s g_{bb}}{g_{ss}}
\end{equation}
are all smooth.

One way to ensure this is for instance if the functions $u,v:M\to\R$ defined as
\begin{equation}
	u(p):=\Bigg\{
\begin{array}{ll}
\dsfrac{\partial_b g_{aa}}{\sqrt{\abs{g_{aa}}}} & g_{aa}\neq 0 \\
0 & g_{aa}= 0 \\
\end{array}
\tn{ and }
	v(p):=\Bigg\{
\begin{array}{ll}
\dsfrac{\partial_a g_{bb}}{\sqrt{\abs{g_{aa}}}} & g_{aa}\neq 0 \\
0 & g_{aa}= 0 \\
\end{array}
\end{equation}
and $\sqrt{\abs{g_{aa}}}$ are smooth for all $a,b\in\{1,\ldots,n\}$.
In this case it is easy to see that all the terms of the sums in equation \eqref{eq_diag_g_contr_kosz} are smooth.

\subsection{Conformally-{\nondeg} metrics}
\label{s_semi_reg_semi_riem_man_example_conformal}

Another example of {\semireg} metric is given by those that can be obtained by a conformal transformation \cfeg{HE95}{42} from {\nondeg} metrics.

\begin{definition}
A singular {\semiriem} manifold $(M,g)$ is said to be \textit{conformally {\nondeg}} if there is a {\nondeg} {\semiriem} metric $\tilde g$ on $M$ and a smooth function $\Omega\in\fiscal M$, $\Omega\geq 0$, so that $g(X,Y)=\Omega^2\tilde g(X,Y)$ for any $X,Y\in\fivect M$. The manifold $(M,g)$ is alternatively denoted by $(M,\tilde g, \Omega)$. A singularity of a conformally {\nondeg} manifold is called \textit{isotropic singularity}.
\end{definition}

The following proposition shows what happens to the Koszul form at a conformal transformation of the metric, similar to the {\nondeg} case \cfeg{HE95}{42}.

\begin{proposition}
\label{thm_conformal_koszul_form}
Let $(M,\tilde g, \Omega)$ be a conformally {\nondeg} singular {\semiriem} manifold. Then, the Koszul form $\kosz$ of $g$ is related to the Koszul form $\tilde \kosz$ of $\tilde g$ by:
\begin{equation}
\label{eq_conformal_koszul_form}
\kosz(X,Y,Z) = \Omega^2\tilde \kosz(X,Y,Z) + \Omega\left[\tilde g(Y,Z)X + \tilde g(X,Z)Y - \tilde g(X,Y)Z\right](\Omega)
\end{equation}
\end{proposition}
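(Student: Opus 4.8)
The plan is to substitute $g = \Omega^2\tilde g$ directly into the definition of the Koszul form (Definition \ref{def_Koszul_form}) and expand, using the Leibniz rule for the action of a vector field on a product of a function with $\tilde g(\cdot,\cdot)$. Since $\metric{Y,Z} = \Omega^2\tilde g(Y,Z)$, and similarly for the other two metric pairings appearing in \eqref{eq_Koszul_form}, the three ``directional derivative'' terms each split into a piece where the vector field differentiates $\tilde g(\cdot,\cdot)$ and a piece where it differentiates $\Omega^2$, while the three Lie-bracket terms simply pick up an overall factor $\Omega^2$.

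First I would record the elementary identity $X(\Omega^2) = 2\Omega\, X(\Omega)$, valid for any $X\in\fivect M$, which gives
$$X\metric{Y,Z} = \Omega^2\, X\!\left(\tilde g(Y,Z)\right) + 2\Omega\, X(\Omega)\,\tilde g(Y,Z),$$
together with the analogous expansions of $Y\metric{Z,X}$ and $Z\metric{X,Y}$.

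Next I would collect the resulting terms by their power of $\Omega$. The six terms carrying a factor $\Omega^2$ --- the three of the form $\Omega^2 X(\tilde g(Y,Z))$ and its cyclic companions, and the three bracket terms $\mp\,\Omega^2\tilde g(\,\cdot\,,[\,\cdot\,,\,\cdot\,])$ --- are, up to the overall factor $\tfrac12$ in \eqref{eq_Koszul_form}, exactly $\Omega^2$ times the defining expression for $\tilde\kosz(X,Y,Z)$, and hence contribute $\Omega^2\tilde\kosz(X,Y,Z)$. The remaining three terms each carry a single factor $\Omega$ and, after the factor $\tfrac12$, assemble into
$$\Omega\left[\tilde g(Y,Z)\,X(\Omega) + \tilde g(Z,X)\,Y(\Omega) - \tilde g(X,Y)\,Z(\Omega)\right],$$
which is precisely $\Omega\left[\tilde g(Y,Z)X + \tilde g(X,Z)Y - \tilde g(X,Y)Z\right](\Omega)$ once the bracketed combination is read as a vector field acting on $\Omega$, using the symmetry $\tilde g(Z,X) = \tilde g(X,Z)$.

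There is no real obstacle here: the computation is entirely mechanical, and the only point requiring care is the bookkeeping of signs and the matching of the three $\Omega$-correction terms against the cyclic pattern of \eqref{eq_Koszul_form}. In particular, no degeneracy issue intervenes --- the identity is a statement purely about the globally defined Koszul forms $\kosz$ and $\tilde\kosz$, and never invokes index raising or the inverse metric, so the derivation is valid verbatim whether or not $g$ is degenerate.
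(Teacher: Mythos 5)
Your proposal is correct and follows essentially the same route as the paper's own proof: direct substitution of $g=\Omega^2\tilde g$ into the Koszul form, Leibniz expansion of the three directional-derivative terms with $X(\Omega^2)=2\Omega\,X(\Omega)$, and collection of the $\Omega^2$ terms into $\Omega^2\tilde\kosz(X,Y,Z)$ with the remaining terms giving the correction $\Omega\left[\tilde g(Y,Z)X + \tilde g(X,Z)Y - \tilde g(X,Y)Z\right](\Omega)$. Your closing observation that no inverse metric or index raising is involved, so degeneracy of $g$ is irrelevant, is also consonant with the paper's treatment.
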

\begin{proof}
From the Koszul formula we obtain
\begin{equation*}
\begin{array}{llll}
	\kosz(X,Y,Z) &=&\ds{\frac 1 2} \{ X (\Omega^2\tilde g(Y,Z)) + Y (\Omega^2\tilde g(Z,X)) - Z (\Omega^2\tilde g(X,Y)) \\
	&&\ - \Omega^2\tilde g(X,[Y,Z]) + \Omega^2\tilde g(Y, [Z,X]) + \Omega^2\tilde g(Z, [X,Y])\} \\
	&=&\ds{\frac 1 2} \{ \Omega^2X (\tilde g(Y,Z)) + \tilde g(Y,Z)X(\Omega^2) + \Omega^2Y (\tilde g(X,Z)) \\
	&& + \tilde g(X,Z)Y(\Omega^2) - \Omega^2Z (\tilde g(X,Y)) - \tilde g(X,Y)Z(\Omega^2) \\
	&&\ - \Omega^2\tilde g(X,[Y,Z]) + \Omega^2\tilde g(Y, [Z,X]) + \Omega^2\tilde g(Z, [X,Y])\} \\
	&=& \Omega^2 \tilde \kosz(X,Y,Z) + \ds{\frac 1 2} \{ \tilde g(Y,Z)X(\Omega^2) \\
	&& + \tilde g(X,Z)Y(\Omega^2) - \tilde g(X,Y)Z(\Omega^2)\} \\
	&=& \Omega^2\tilde \kosz(X,Y,Z) + \Omega\big[\tilde g(Y,Z)X \\
	&& + \tilde g(X,Z)Y - \tilde g(X,Y)Z\big](\Omega)
\end{array}
\end{equation*}
\end{proof}

\begin{theorem}
\label{thm_conformal_semi_regular}
Let $(M,\tilde g, \Omega)$ be a singular {\semiriem} manifold which is conformally {\nondeg}. Then, $(M,g=\Omega^2\tilde g)$ is a {\semireg} {\semiriem} manifold.
\end{theorem}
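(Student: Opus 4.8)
The plan is to verify directly the two ingredients of semi-regularity: first that $(M,g)$ is {\rstationary}, so that Proposition \ref{thm_sr_cocontr_kosz} becomes applicable, and then that the Koszul contraction $\kosz(X,Y,\cocontr)\kosz(Z,T,\cocontr)$ is smooth. Everything rests on the conformal transformation law of Proposition \ref{thm_conformal_koszul_form}, which I would first rewrite by factoring out a single power of $\Omega$:
\begin{equation*}
\kosz(X,Y,\_) = \Omega\big[\Omega\,\tilde\kosz(X,Y,\_) + \gamma_{XY}\big],
\end{equation*}
where $\gamma_{XY}$ denotes the smooth $1$-form $Z\mapsto \tilde g(Y,Z)X(\Omega) + \tilde g(X,Z)Y(\Omega) - \tilde g(X,Y)Z(\Omega)$. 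The bracket is a smooth $1$-form, so this exhibits $\kosz(X,Y,\_)$ as $\Omega$ times a smooth $1$-form.

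{\rrstationary}ity is then immediate. At a point $p$ the radical $\radix{(T_pM)}$ is trivial wherever $\Omega(p)\neq 0$, since there $g$ is {\nondeg}; and wherever $\Omega(p)=0$ the factorization above forces $\kosz(X,Y,\_)_p=0$, so $\kosz(X,Y,\_)$ annihilates $\radix{(T_pM)}$ regardless of whether that radical is trivial or all of $T_pM$. Hence $\kosz(X,Y,\_)\in\annihforms M$ for all $X,Y\in\fivect M$, which is the defining condition of a {\rstationary} manifold (Definition \ref{def_radical_stationary_manifold}), and Proposition \ref{thm_sr_cocontr_kosz} becomes available.

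The main computation is the contraction. I would carry it out on the open set $\{\Omega\neq 0\}$, where $g$ is {\nondeg} and the signature is locally constant, so that $\annihg$ is the honest cometric $g^{-1}=\Omega^{-2}\tilde g^{-1}$. Substituting $\kosz(X,Y,\_)=\Omega^2\tilde\kosz(X,Y,\_)+\Omega\,\gamma_{XY}$ and the analogue for $(Z,T)$ into $\Omega^{-2}\tilde g^{-1}(\cdot,\cdot)$, the singular factor $\Omega^{-2}$ cancels against the two factors of $\Omega^2$ and one is left with
\begin{equation*}
\begin{array}{rcl}
\kosz(X,Y,\cocontr)\kosz(Z,T,\cocontr) &=& \Omega^2\,\tilde g^{-1}\big(\tilde\kosz(X,Y,\_),\tilde\kosz(Z,T,\_)\big) \\
&& +\, \Omega\big[\tilde g^{-1}\big(\tilde\kosz(X,Y,\_),\gamma_{ZT}\big) + \tilde g^{-1}\big(\gamma_{XY},\tilde\kosz(Z,T,\_)\big)\big] \\
&& +\, \tilde g^{-1}\big(\gamma_{XY},\gamma_{ZT}\big).
\end{array}
\end{equation*}
Each term on the right is manifestly smooth on all of $M$: $\Omega$ and the $1$-forms $\gamma_{XY},\gamma_{ZT}$ are smooth, $\tilde g^{-1}$ is the smooth {\nondeg} cometric of $\tilde g$, and $\tilde g^{-1}(\tilde\kosz(X,Y,\_),\tilde\kosz(Z,T,\_))$ is the ordinary contraction of the {\nondeg} metric $\tilde g$. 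Thus the contraction, computed on $\{\Omega\neq 0\}$, coincides with a globally smooth function, so it extends smoothly; by Proposition \ref{thm_sr_cocontr_kosz} this proves that $(M,g=\Omega^2\tilde g)$ is {\semireg}.

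The only genuinely delicate point — and the one I would highlight — is the meaning of ``smooth'' across the zero set of $\Omega$, where the signature changes. The content of the theorem is precisely that the divergence $\Omega^{-2}$ of the cometric is exactly compensated by the $\Omega^2$ that the conformal factor contributes to each Koszul form, which is the cancellation displayed above. One must, however, note that at a zero of $\Omega$ the naive pointwise contraction is $0$ (both $1$-forms vanish there and the radical annihilator is trivial), whereas the continuous extension retains the residual term $\tilde g^{-1}(\gamma_{XY},\gamma_{ZT})$; the two need not agree. It is therefore essential to adopt the convention of Remark \ref{rem_contraction_sign_change_smooth} and interpret smoothness of the contraction as smoothness of its extension by continuity from the dense set of constant-signature points, rather than as agreement with pointwise values.
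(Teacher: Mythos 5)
Your decomposition $\kosz(X,Y,\_)=\Omega\left[\Omega\,\tilde\kosz(X,Y,\_)+\gamma_{XY}\right]$ is precisely the paper's: the paper writes $\kosz(X,Y,Z)=\Omega h(X,Y,Z)$ with $h=\Omega\tilde\kosz+\gamma$, and your cancellation of $\Omega^{-2}$ against $\Omega^{2}$ via $\annihg=g^{-1}=\Omega^{-2}\tilde g^{-1}$ on $\{\Omega\neq0\}$ is the same computation the paper performs in a $\tilde g$-orthonormal frame using Theorem \ref{thm_contraction_orthogonal}. Your {\rstationary}ity argument is also sound. Up to that point the proof is correct and matches the paper's in substance.

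Your final paragraph, however, contains a genuine error, and it conceals the one observation your argument is missing. You claim that at a zero of $\Omega$ the continuous extension ``retains the residual term $\tilde g^{-1}(\gamma_{XY},\gamma_{ZT})$'' while the pointwise contraction there is $0$, that ``the two need not agree,'' and that the convention of Remark \ref{rem_contraction_sign_change_smooth} resolves this. It cannot: nothing prevents $\Omega$ from vanishing on a set with nonempty interior (so $\{\Omega\neq0\}$ need not even be dense), and interior zeros of $\Omega$ are points of locally constant signature, i.e.\ they belong to $M-\mansigvar M$, where the contraction has its honest pointwise value $0$ (an empty sum, since the rank is $0$). If your globally smooth candidate function did not vanish there, the contraction would be discontinuous on $M-\mansigvar M$ itself, no extension by continuity would exist, and Proposition \ref{thm_sr_cocontr_kosz} would be inapplicable --- the proof would fail, not merely require a convention. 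The missing fact, which the paper states explicitly in its proof, is that the hypothesis $\Omega\geq0$ forces every zero of $\Omega$ to be a minimum, so $\de\Omega_p=0$ wherever $\Omega(p)=0$; hence $X(\Omega)$, $Y(\Omega)$, $Z(\Omega)$ all vanish at $p$ and $\gamma_{XY}(p)=0$. Consequently the residual term $\tilde g^{-1}(\gamma_{XY},\gamma_{ZT})$ vanishes on $\{\Omega=0\}$, your smooth right-hand side agrees with the pointwise contraction on all of $M-\mansigvar M$ (including interior zeros), and its extension by continuity exists, is smooth, and is the contraction. With this one addition your proof is complete and coincides with the paper's.
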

\begin{proof}
The metric $g$ is either {\nondeg}, or it is $0$. Therefore, the manifold $(M,g)$ is {\rstationary}.

Let $(E_a)_{a=1}^n$ be a local frame of vector fields on an open $U\subseteq M$, which is orthonormal with respect to the {\nondeg} metric $\tilde g$. Then, the metric $g$ is diagonal in $(E_a)_{a=1}^n$. 

Proposition \ref{thm_conformal_koszul_form} implies that the Koszul form has the form $\kosz(X,Y,Z) = \Omega h(X,Y,Z)$, where
\begin{equation}
	h(X,Y,Z) = \Omega \tilde \kosz(X,Y,Z) + \left[\tilde g(Y,Z)X + \tilde g(X,Z)Y - \tilde g(X,Y)Z\right](\Omega)
\end{equation}
is a smooth function depending on $X,Y,Z$. Moreover, if $\Omega=0$, then $h(X,Y,Z)=0$ as well, because the first term is multiple of $\Omega$, and the second is a partial derivative of $\Omega$, which reaches its minimum at $0$.

Theorem \ref{thm_contraction_orthogonal} saids that, on the regions of constant signature, if $r=n-\rank g+1$, for any $X,Y,Z,T\in U$ and for any $a\in\{1,\ldots,n\}$, 
\begin{equation}
\begin{array}{lll}
\kosz(X,Y,\cocontr)\kosz(Z,T,\cocontr)
&=& \sum_{a=r}^n \dsfrac{\kosz(X,Y,E_a)\kosz(Z,T,E_a)}{g(E_a,E_a)} \\
&=& \sum_{a=r}^n \dsfrac{\Omega^2 h(X,Y,E_a) h(Z,T,E_a)}{\Omega^2\tilde g(E_a,E_a)} \\
&=& \sum_{a=1}^n \dsfrac{h(X,Y,E_a) h(Z,T,E_a)}{\tilde g(E_a,E_a)}. \\
\end{array}
\end{equation}
If $\Omega=0$, then $h(X,Y,Z)=0$, therefore the last member does not depend on $r$. It follows that $\kosz(X,Y,\cocontr)\kosz(Z,T,\cocontr)\in\fiscal M$, and according to Proposition \ref{thm_sr_cocontr_kosz}, $(M,g)$ is {\semireg}.
\end{proof}

\section{Einstein's equation on {\semireg} spacetimes}
\label{s_einstein_tensor_densitized}

\subsection{The problem of singularities}
\label{s_intro_singularities}

In 1965 Roger Penrose \cite{Pen65}, and later he and S. Hawking \cite{Haw66i,Haw66ii,Haw67iii,HP70,HE95}, proved a set of \textit{singularity theorems}. These theorems state that under reasonable conditions the spacetime turns out to be \textit{geodesic incomplete} -- \ie it has \textit{singularities}. Consequently, some researchers proclaimed that General Relativity predicts its own breakdown, by predicting the singularities \cite{HP70,Haw76,ASH91,HP96,Ash08,Ash09}. Hawking's discovery of the black hole evaporation, leading to his \textit{information loss paradox} \cite{Haw75,Haw76}, made the things even worse. The singularities seem to destroy information, in particular violating the unitary evolution of quantum systems. 
The reason is that the field equations cannot be continued through singularities.

By applying the results presented in this article we shall see that, at least for {\semireg} {\semiriem} manifolds, we can extend Einstein's equation through the singularities. Einstein's equation is replaced by a densitized version which is equivalent to the standard version if the metric is {\nondeg}. This equation remains smooth at singularities, which now become harmless.

\subsection{Einstein's equation on {\semireg} spacetimes}
\label{ss_einstein_tensor_densitized}

To define the Einstein tensor on a {\semireg} {\semiriem} manifold, we normally make use of the Ricci tensor and the scalar curvature:
\begin{equation}
\label{eq_einstein_tensor}
	G:=\ric-\frac 1 2 s g
\end{equation}
These two quantities can be defined even for a degenerate metric, so long as the metric doesn't change its signature (see \sref{s_ricci_tensor_scalar}), but at the points where the signature changes, they can become infinite.

\begin{definition}
\label{def_semi_reg_spacetime}
A \textit{{\semireg} spacetime} is a four{\hyph}dimensional {\semireg} {\semiriem} manifold having the signature $(0,3,1)$ at the points where it is {\nondeg}.
\end{definition}

\begin{theorem}
\label{thm_densitized_einstein}
Let $(M,g)$ be a {\semireg} spacetime. Then its Einstein density tensor of weight $2$, $G\det g$, is smooth.
\end{theorem}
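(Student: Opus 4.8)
The plan is to reduce everything to the one clean fact that the Riemann tensor $R\in\tensors 0 4 M$ is smooth on all of $M$ (Theorem \ref{thm_riemann_curvature_semi_regular}), and then to track how the degenerate contractions hidden in $\ric$ and $s$ interact with the factor $\det g$. Writing $G=\ric-\tfrac12 s\,g$, the object to control is
\begin{equation*}
	G\det g = (\det g)\,\ric - \tfrac 1 2 (\det g\, s)\,g .
\end{equation*}
Since $g$ is smooth, it suffices to prove that the two densitized contractions $(\det g)\ric$ and $(\det g)s$ are smooth. On the dense open set $M-\mansigvar M$ of \nondeg points (Remark \ref{rem_sign_var_points}) the {\rannih} inner product $\annihg$ is literally the inverse metric, so in local coordinates $\ric_{ac}=\annihg^{bd}R_{abcd}=g^{bd}R_{abcd}$ and $s=g^{ac}g^{bd}R_{abcd}$ there. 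Following Remark \ref{rem_contraction_sign_change_smooth}, I would exhibit, on $M-\mansigvar M$, expressions for these densities that are manifestly smooth on all of $M$, and then conclude by continuity.

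For the Ricci density this is immediate. Let $A^{bd}$ denote the cofactor (adjugate) matrix of $g_{bd}$, whose entries are polynomials in the smooth components $g_{ij}$ and hence smooth on all of $M$. On $M-\mansigvar M$ one has $(\det g)\,g^{bd}=A^{bd}$, so $(\det g)\ric_{ac}=A^{bd}R_{abcd}$ there; the right-hand side is a contraction of smooth objects, smooth everywhere, and provides the required smooth extension of $(\det g)\ric$.

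The scalar density is the crux, and is where I expect the real work to lie. Naively $s$ carries \emph{two} inverse-metric factors, so $(\det g)s$ would still contain a pole proportional to $1/\det g$; the point is that the antisymmetry of $R$ removes one of them. Using $R_{abcd}=-R_{abdc}$ (Proposition \ref{thm_curv_symm}, property \eqref{thm_curv_symm_zt}) one antisymmetrizes the two cometric factors,
\begin{equation*}
	s=\tfrac 1 2\left(g^{ac}g^{bd}-g^{ad}g^{bc}\right)R_{abcd},
\end{equation*}
and then $(\det g)\left(g^{ac}g^{bd}-g^{ad}g^{bc}\right)=\dfrac{1}{\det g}\left(A^{ac}A^{bd}-A^{ad}A^{bc}\right)$. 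The bracket is a $2\times 2$ minor of the adjugate of $g$, and by Jacobi's identity for minors of the adjugate it equals $\det g$ times the complementary $(n-2)\times(n-2)$ minor $C^{ab,cd}$ of $g$, which is again polynomial in the $g_{ij}$ and therefore smooth. Hence on $M-\mansigvar M$,
\begin{equation*}
	(\det g)\,s=\tfrac 1 2\,C^{ab,cd}R_{abcd},
\end{equation*}
a contraction of smooth quantities, so $(\det g)s$ extends smoothly across $\mansigvar M$. Combining the two steps, $(\det g)G_{ac}=A^{bd}R_{abcd}-\tfrac14\,g_{ac}\,C^{pq,rs}R_{pqrs}$ is smooth, which is the claim.

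The main obstacle is precisely this single Jacobi cancellation in the scalar term: without it one would only obtain smoothness of the weight-$4$ density $G(\det g)^2$, and it is the antisymmetry of $R$ together with the minor identity that brings the required weight down to $2$. As a consistency check on any region of constant signature one can instead choose an orthogonal frame $(E_a)_{a=1}^n$ with $E_1,\dots,E_{n-\rank g}$ radical (Theorem \ref{thm_contraction_orthogonal}); there $\annihg$ is diagonal (Proposition \ref{thm_cometric_in_basis}) and the contractions become $(\det g)\ric_{ac}=\sum_b\left(\prod_{e\neq b}g_{ee}\right)R_{abcb}$ and $(\det g)s=\sum_{a\neq b}\left(\prod_{e\neq a,b}g_{ee}\right)R_{abab}$, the diagonal terms $R_{aaaa}$ vanishing by antisymmetry; both are polynomial in the $g_{ee}$ and recover the same conclusion, with the dimension $n=4$ and signature $(0,3,1)$ entering only through the definition of $G$ and of the weight-$2$ density.
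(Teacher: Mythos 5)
Your proposal is correct, but it takes a genuinely different route from the paper's. The paper never splits $G$ into $\ric-\frac 1 2 sg$ and never densitizes $\ric$ and $s$ separately: it invokes the four-dimensional double-Hodge-dual identity of Penrose--Rindler, $G^{ab}=g_{kl}\epsilon^{akst}\epsilon^{blpq}R_{stpq}/\det g$, so that $G^{ab}\det g=g_{kl}\epsilon^{akst}\epsilon^{blpq}R_{stpq}$ is manifestly smooth in a single stroke, being built only from the smooth curvature $R_{abcd}$ (Theorem \ref{thm_riemann_curvature_semi_regular}), the smooth $g_{kl}$, and the constant Levi-Civita symbols. Your two cancellations are in fact packaged inside that identity: expanding the product of the two $\epsilon$'s into generalized Kronecker deltas reproduces precisely your adjugate $A^{bd}=(\det g)\,g^{bd}$ and the complementary-minor combination $C^{ab,cd}$. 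What your route buys is dimension-independence and transparency: the adjugate absorbs one factor of $\det g$ in the Ricci density, and the Jacobi minor identity (after antisymmetrizing with $R_{abcd}=-R_{abdc}$) absorbs the second in the scalar density, which explains structurally why the weight is $2$ and not $4$. What the paper's route buys is brevity and the fact that the hypothesis ``spacetime'' ($n=4$) enters exactly once, through the validity of the double-dual formula for $G$.

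One small imprecision you should repair: $M-\mansigvar M$ is the set of \emph{constant-signature} points, not of {\nondeg} points, and if $g$ degenerates on an open region the {\nondeg} points are not dense, so ``extension by continuity'' from them does not by itself pin down the density there. This is harmless for your formulas, but it needs saying: on a constant-signature degenerate region one has $\det g=0$ with $\ric$ and $s$ finite, so $(\det g)\ric=0$ and $(\det g)s=0$, and your right-hand sides vanish there too --- for corank one the adjugate has rank one with radical image, so $A^{bd}R_{abcd}=0$ by Corollary \ref{thm_curvature_tensor_radical}, and the only nonzero complementary minors $C^{ab,cd}$ pair with curvature components carrying a radical index, which also vanish; for corank at least two, $A$ and the relevant minors vanish outright. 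Your closing orthogonal-frame computation (via Theorem \ref{thm_contraction_orthogonal}) establishes exactly this, so promote it from a ``consistency check'' to an actual step of the proof whenever open degenerate regions are allowed. (The paper's own proof carries the same implicit caveat, since it too derives its formula only at {\nondeg} points.)
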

\begin{proof}
At the points $p$ where the metric is {\nondeg}, the Einstein tensor  \eqref{eq_einstein_tensor} can be expressed using the Hodge $\ast$ operator by:
\begin{equation}
\label{eq_einstein_tensor_hodge}
	G_{ab} = g^{st}(\ast R\ast)_{asbt},
\end{equation}
where $(\ast R\ast)_{abcd}$ is obtained by taking the Hodge dual of $R_{abcd}$ with respect to the first and the second pairs of indices \cfeg{PeR87}{234}. Explicitly, if we write the components of the volume form associated to the metric as $\varepsilon_{abcd}$, we have
\begin{equation}
	(\ast R\ast)_{abcd} = \varepsilon_{ab}{}^{st}\varepsilon_{cd}{}^{pq}R_{stpq}.
\end{equation}
If we employ coordinates, the volume form can be expressed in terms of the Levi-Civita symbol by
\begin{equation}
\varepsilon_{abcd} = \epsilon_{abcd}\sqrt{-\det g}.
\end{equation}
We can rewrite the Einstein tensor as
\begin{equation}
\label{eq_einstein_tensor_hodge_lc}
	G^{ab} = \dsfrac{g_{kl}\epsilon^{akst}\epsilon^{blpq} R_{stpq}}{\det g},
\end{equation}

If we allow the metric to become degenerate, the Einstein tensor so defined becomes divergent, as it is expected. But the tensor density $G^{ab}\det g$, of weight $2$, associated to it remains smooth, and we get
\begin{equation}
\label{eq_einstein_tensor_density}
	G^{ab}\det g = g_{kl}\epsilon^{akst}\epsilon^{blpq} R_{stpq}.
\end{equation}

Since the spacetime is {\semireg}, this quantity is indeed smooth, because it is constructed only from the Riemann curvature tensor, which is smooth (see Theorem \ref{thm_riemann_curvature_semi_regular}), and from the Levi-Civita symbol, which is constant in the particular coordinate system. The determinant of the metric converges to $0$ so that it cancels the divergence which normally would appear in $G^{ab}$. The tensor density $G_{ab}\det g$, being obtained by lowering its indices, is also smooth.
\end{proof}

\begin{remark}
Because the densitized Einstein tensor $G_{ab}\det g$ is smooth, it follows that the densitized curvature scalar is smooth
\begin{equation}
\label{eq_curvature_scalar_density}
	s\det g = -g_{ab}G^{ab}\det g,
\end{equation}
and so is the densitized Ricci tensor
\begin{equation}
\label{eq_ricci_density}
	R_{ab}\det g = g_{as}g_{bt}G^{st}\det g + \dsfrac 1 2 s g_{ab}\det g.
\end{equation}
\end{remark}

\begin{remark}
In the context of General Relativity, on a {\semireg} spacetime, if $T$ is the stress-energy tensor, we can write the \textit{densitized Einstein equation}:
\begin{equation}
\label{eq_einstein:densitized}
	G\det g + \Lambda g\det g = \kappa T\det g,
\end{equation}
or, in coordinates or local frames,
\begin{equation}
\label{eq_einstein_idx:densitized}
	G_{ab}\det g + \Lambda g_{ab}\det g = \kappa T_{ab}\det g,
\end{equation}
where $\kappa:=\dsfrac{8\pi \mc G}{c^4}$, with $\mc G$ and $c$ being Newton's constant and the speed of light.
\end{remark}

\section{Applications}
\label{s_applications}

This paper introduces new mathematical tools to deal with singularities in {\semiriem} geometry, motivated mainly by the problems of singularities in General Relativity.

The mathematical tools introduced here were further developed in \cite{Sto11b}, where it is shown that the warped products provide a large class of {\semireg} manifolds. In \cite{Sto11d} we extend the Cartan structure equations to the degenerate case.

Other papers use these tools to deepen the understanding of cosmological singularities encountered in General Relativity.

The {\flrw} metrics are shown to be {\semireg} in \cite{Sto11h}. We also show that the densities $\rho\sqrt{-\det g}$,  $p\sqrt{-\det g}$,  and $T_{ab}\sqrt{-\det g}$ are smooth, and the densitized version of Einstein's equation \eqref{eq_einstein:densitized} holds (even with weight $1$).

The singularities studied here may seem to be too special, knowing that the metrics of the stationary black hole solutions have components which diverge while approaching the singularity. But for the {\schw} black holes, an appropriate coordinate transformation makes the metric analytic, and in fact {\semireg} \cite{Sto11e}. This can be viewed as analogous to the Eddington-Finkelstein coordinate transformations, which made the apparently singular metric on the event horizon of the {\schw} black hole become {\nondeg}. In  \cite{Sto11e} we do this for the genuine singularity at $r=0$, and of course we can't make it {\nondeg}, but we make it degenerate, analytical and {\semireg}. The {\rn} and {\kn} solutions can be made analytic at singularities too, and the electromagnetic potential and field become analytic too \cite{Sto11f,Sto11g}. The singularities of this type can be used to construct black holes which appear and disappear by evaporation, and they are compatible with global hyperbolicity \cite{Sto12e}.

A particular kind of {\semireg} singularities is introduced in \cite{Sto12b}, which admit smooth Ricci decomposition and allow the writing a tensorial form of Einstein's equation involving the Ricci part of the Riemann curvature, instead of the Ricci tensor. This class contains as subcases {\FLRW} singularities \cite{Sto12a}, $1+3$ degenerate warped product singularities, isotropic singularities, and the {\schw} solution \cite{Sto12b}. A big bang singularity of this kind satisfies automatically the Weyl curvature hypothesis \cite{Sto12c}. This hypothesis was proposed by Penrose to explain the second law of Thermodynamics, and of the high homogeneity and isotropy of the universe, especially around the Big Bang \cite{Pen79}.

All these applications are based on the methods introduced and developed in this article.

\textbf{Acknowledgements.}
Partially supported by Romanian Government grant PN II Idei 1187.



\begin{thebibliography}{10}

\bibitem{ASH87}
A.~Ashtekar.
\newblock {New Hamiltonian formulation of general relativity}.
\newblock {\em Phys. Rev. D}, 36(6):1587--1602, 1987.

\bibitem{ASH91}
A.~Ashtekar.
\newblock {\em {Lectures on non-perturbative canonical gravity, Notes prepared
  in collaboration with Ranjeet S. Tate}}, volume~17.
\newblock World Scientific, Singapore, 1991.

\bibitem{Ash08}
A.~Ashtekar.
\newblock {Singularity Resolution in Loop Quantum Cosmology: A Brief Overview}.
\newblock {\em J. Phys. Conf. Ser.}, 189:012003, 2009.
\newblock \href{http://arxiv.org/abs/0812.4703}{arXiv:gr-qc/0812.4703}.

\bibitem{Ash09}
A.~Ashtekar and E.~Wilson-Ewing.
\newblock {Loop quantum cosmology of Bianchi I models}.
\newblock {\em Phys. Rev.}, D79:083535, 2009.
\newblock \href{http://arxiv.org/abs/0903.3397}{arXiv:gr-qc/0903.3397}.

\bibitem{Bej95}
A.~Bejancu and K.~L. Duggal.
\newblock {Lightlike Submanifolds of Semi-{R}iemannian Manifolds}.
\newblock {\em Acta Appl. Math.}, 38(2):197--215, 1995.

\bibitem{Das07}
A.~Das.
\newblock {\em {Tensors: the Mathematics of Relativity Theory and Continuum
  Mechanics}}.
\newblock Springer Verlag, 2007.

\bibitem{Der93}
T.~Dereli and R.~W. Tucker.
\newblock {Signature Dynamics in General Relativity}.
\newblock {\em Classical Quantum Gravity}, 10:365, 1993.

\bibitem{Dray96}
T.~Dray.
\newblock {Einstein's Equations in the Presence of Signature Change}.
\newblock {\em Journal of Mathematical Physics}, 37:5627--5636, 1996.
\newblock \href{http://arxiv.org/abs/gr-qc/9610064}{arXiv:gr-qc/9610064}.

\bibitem{Dray01}
T.~Dray, G.~F.~R. Ellis, and C.~Hellaby.
\newblock {Note on Signature Change and Colombeau Theory}.
\newblock {\em Gen. Relat. Grav.}, 33(6):1041--1046, 2001.

\bibitem{Dray91}
T.~Dray, C.~A. Manogue, and R.~W. Tucker.
\newblock {Particle production from signature change}.
\newblock {\em Gen. Relat. Grav.}, 23(8):967--971, 1991.

\bibitem{Dray93}
T.~Dray, C.~A. Manogue, and R.~W. Tucker.
\newblock {Scalar Field Equation in the Presence of Signature Change}.
\newblock {\em Phys. Rev. D}, 48(6):2587--2590, 1993.

\bibitem{Dray95}
T.~Dray, C.~A. Manogue, and R.~W. Tucker.
\newblock {Boundary Conditions for the Scalar Field in the Presence of
  Signature Change}.
\newblock {\em Classical Quantum Gravity}, 12:2767, 1995.

\bibitem{Bej96}
K.~L. Duggal and A.~Bejancu.
\newblock {\em {Lightlike Submanifolds of Semi-{R}iemannian Manifolds and
  Applications}}, volume 364.
\newblock Kluwer Academic, 1996.

\bibitem{ER35}
A.~Einstein and N.~Rosen.
\newblock {The Particle Problem in the General Theory of Relativity}.
\newblock {\em Phys. Rev.}, 48(1):73, 1935.

\bibitem{Ellis92b}
G.~F.~R. Ellis.
\newblock {Covariant Change of Signature in Classical Relativity}.
\newblock {\em Gen. Relat. Grav.}, 24(10):1047--1068, 1992.

\bibitem{Ellis92a}
G.~F.~R. Ellis, A.~Sumeruk, D.~Coule, and C.~Hellaby.
\newblock {Change of Signature in Classical Relativity}.
\newblock {\em Classical Quantum Gravity}, 9:1535, 1992.

\bibitem{GHLF04}
S.~Gallot, D.~Hullin, and J.~Lafontaine.
\newblock {\em {R}iemannian Geometry}.
\newblock Springer-Verlag, Berlin, New York, 3rd edition, 2004.

\bibitem{Gibb06}
G.~W. Gibbons.
\newblock {Part III: Applications of Differential Geometry to Physics}.
\newblock {\em Cambridge CB3 0WA, UK}, 2006.

\bibitem{Haw66i}
S.~W. Hawking.
\newblock The occurrence of singularities in cosmology.
\newblock {\em P. Roy. Soc. A-Math. Phy.}, 294(1439):511--521, 1966.

\bibitem{Haw66ii}
S.~W. Hawking.
\newblock The occurrence of singularities in cosmology. {II}.
\newblock {\em P. Roy. Soc. A-Math. Phy.}, 295(1443):490--493, 1966.

\bibitem{Haw67iii}
S.~W. Hawking.
\newblock The occurrence of singularities in cosmology. {III}. {C}ausality and
  singularities.
\newblock {\em P. Roy. Soc. A-Math. Phy.}, 300(1461):187--201, 1967.

\bibitem{Haw75}
S.~W. Hawking.
\newblock {Particle Creation by Black Holes}.
\newblock {\em Comm. Math. Phys.}, 43(3):199--220, 1975.

\bibitem{Haw76}
S.~W. Hawking.
\newblock {Breakdown of Predictability in Gravitational Collapse}.
\newblock {\em Phys. Rev. D}, 14(10):2460, 1976.

\bibitem{HE95}
S.~W. Hawking and G.~F.~R. Ellis.
\newblock {\em {The Large Scale Structure of Space Time}}.
\newblock Cambridge University Press, 1995.

\bibitem{HP70}
S.~W. Hawking and R.~W. Penrose.
\newblock {The Singularities of Gravitational Collapse and Cosmology}.
\newblock {\em Proc. Roy. Soc. London Ser. A}, 314(1519):529--548, 1970.

\bibitem{HP96}
S.~W. Hawking and R.~W. Penrose.
\newblock {\em {The Nature of Space and Time}}.
\newblock Princeton University Press, Princeton and Oxford, 1996.

\bibitem{Hay92}
S.~A. Hayward.
\newblock {Signature Change in General Relativity}.
\newblock {\em Classical Quantum Gravity}, 9:1851, 1992.

\bibitem{Hay93}
S.~A. Hayward.
\newblock {Junction Conditions for Signature Change}.
\newblock {\em arXiv:gr-qc/9303034}, 1993.

\bibitem{Hay95}
S.~A. Hayward.
\newblock {Comment on "Failure of Standard Conservation Laws at a Classical
  Change of Signature"}.
\newblock {\em Phys. Rev. D}, 52(12):7331--7332, 1995.

\bibitem{Dray94}
C.~Hellaby and T.~Dray.
\newblock {Failure of Standard Conservation Laws at a Classical Change of
  Signature}.
\newblock {\em Phys. Rev. D}, 49(10):5096--5104, 1994.

\bibitem{Koss85}
M.~Kossowski.
\newblock {Fold Singularities in Pseudo {R}iemannian Geodesic Tubes}.
\newblock {\em Proc. Amer. Math. Soc.}, pages 463--469, 1985.

\bibitem{Koss87}
M.~Kossowski.
\newblock {Pseudo-{R}iemannian Metric Singularities and the Extendability of
  Parallel Transport}.
\newblock {\em Proc. Amer. Math. Soc.}, 99(1):147--154, 1987.

\bibitem{Koss93a}
M.~Kossowski and M.~Kriele.
\newblock {Signature Type Change and Absolute Time in General Relativity}.
\newblock {\em Classical Quantum Gravity}, 10:1157, 1993.

\bibitem{Koss93b}
M.~Kossowski and M.~Kriele.
\newblock {Smooth and Discontinuous Signature Type Change in General
  Relativity}.
\newblock {\em Classical Quantum Gravity}, 10:2363, 1993.

\bibitem{Koss94a}
M.~Kossowski and M.~Kriele.
\newblock {The Einstein Equation for Signature Type Changing Spacetimes}.
\newblock {\em Proceedings: Mathematical and Physical Sciences},
  446(1926):115--126, 1994.

\bibitem{Koss94b}
M.~Kossowski and M.~Kriele.
\newblock {Transverse, Type Changing, Pseudo {R}iemannian Metrics and the
  Extendability of Geodesics}.
\newblock {\em Proceedings of the Royal Society of London. Series A:
  Mathematical and Physical Sciences}, 444(1921):297--306, 1994.

\bibitem{Kup87b}
D.~Kupeli.
\newblock Degenerate manifolds.
\newblock {\em Geom. Dedicata}, 23(3):259--290, 1987.

\bibitem{Kup87c}
D.~Kupeli.
\newblock Degenerate submanifolds in semi-{R}iemannian geometry.
\newblock {\em Geom. Dedicata}, 24(3):337--361, 1987.

\bibitem{Kup87a}
D.~Kupeli.
\newblock On null submanifolds in spacetimes.
\newblock {\em Geom. Dedicata}, 23(1):33--51, 1987.

\bibitem{Kup96}
D.~Kupeli.
\newblock {\em Singular Semi-{R}iemannian Geometry}.
\newblock Kluwer Academic Publishers Group, 1996.

\bibitem{Moi40}
G.~C. Moisil.
\newblock {Sur les g\'eod\'esiques des espaces de Riemann singuliers}.
\newblock {\em Bull. Math. Soc. Roumaine Sci.}, 42:33--52, 1940.

\bibitem{ONe83}
B.~O'Neill.
\newblock {\em Semi-{R}iemannian Geometry with Applications to Relativity}.
\newblock Number 103 in Pure Appl. Math. Academic Press, New York-London, 1983.

\bibitem{Pam03}
A.~Pambira.
\newblock {Harmonic Morphisms Between Degenerate Semi-Rie\-mann\-ian
  Manifolds}.
\newblock {\em Contributions to Algebra and Geometry}, 46(1):261--281, 2005.
\newblock \href{http://arxiv.org/abs/math/0303275}{arXiv:math/0303275}.

\bibitem{Pen65}
R.~Penrose.
\newblock {Gravitational Collapse and Space-Time Singularities}.
\newblock {\em Phys. Rev. Lett.}, 14(3):57--59, 1965.

\bibitem{Pen79}
R.~Penrose.
\newblock {Singularities and time-asymmetry}.
\newblock In {\em {General relativity: an Einstein centenary survey}},
  volume~1, pages 581--638, 1979.

\bibitem{PeR87}
R.~Penrose and W.~Rindler.
\newblock {\em {Spinors and Space-Time: Volume 1, Two-Spinor Calculus and
  Relativistic Fields (Cambridge Monographs on Mathematical Physics)}}.
\newblock {Cambridge University Press}, 1987.

\bibitem{Rom08}
S.~Roman.
\newblock {\em {Advanced Linear Algebra}}.
\newblock Springer, 2008.

\bibitem{Rom93a}
J.~D. Romano.
\newblock {Geometrodynamics vs. Connection Dynamics}.
\newblock {\em Gen. Relat. Grav.}, 25(8):759--854, 1993.
\newblock \href{http://arxiv.org/abs/gr-qc/9303032}{arXiv:gr-qc/9303032}.

\bibitem{Sak84}
A.~D. Sakharov.
\newblock {Cosmological Transitions with a Change in Metric Signature}.
\newblock {\em Sov. Phys. JETP}, 60:214, 1984.

\bibitem{Sto11d}
O.~C. Stoica.
\newblock Cartan's structural equations for degenerate metric.
\newblock {\em Arxiv preprint math.DG/1111.0646}, November 2011.
\newblock \href{http://arxiv.org/abs/1111.0646}{arXiv:math.DG/1111.0646}.

\bibitem{Sto11b}
O.~C. Stoica.
\newblock Warped products of singular semi-{R}iemannian manifolds.
\newblock {\em Arxiv preprint math.DG/1105.3404}, May 2011.
\newblock \href{http://arxiv.org/abs/1105.3404}{arXiv:math.DG/1105.3404}.

\bibitem{Sto11f}
O.~C. Stoica.
\newblock Analytic {R}eissner-{N}ordstr{\"o}m singularity.
\newblock {\em \href{http://stacks.iop.org/1402-4896/85/i=5/a=055004}{Phys.
  Scr.}}, 85(5):055004, 2012.
\newblock \href{http://arxiv.org/abs/1111.4332}{arXiv:gr-qc/1111.4332}.

\bibitem{Sto12a}
O.~C. Stoica.
\newblock Beyond the {F}riedmann-{L}ema{\^i}tre-{R}obertson-{W}alker {B}ig
  {B}ang singularity.
\newblock {\em Commun. Theor. Phys.}, 58(4):613--616, March 2012.
\newblock \href{http://arxiv.org/abs/1203.1819}{arXiv:gr-qc/1203.1819}.

\bibitem{Sto12e}
O.~C. Stoica.
\newblock
  \href{http://www.degruyter.com/view/j/auom.2012.20.issue-2/v10309-012-0050-3/v10309-012-0050-3.xml}{Spacetimes
  with Singularities}.
\newblock {\em An. {\c S}t. Univ. Ovidius Constan{\c t}a}, 20(2):213--238, July
  2012.
\newblock \href{http://arxiv.org/abs/1108.5099}{arXiv:gr-qc/1108.5099}.

\bibitem{Sto11e}
O.~C. Stoica.
\newblock Schwarzschild singularity is semi-regularizable.
\newblock {\em \href{http://dx.doi.org/10.1140/epjp/i2012-12083-1}{Eur. Phys.
  J. Plus}}, 127(83):1--8, 2012.
\newblock \href{http://arxiv.org/abs/1111.4837}{arXiv:gr-qc/1111.4837}.

\bibitem{Sto11h}
O.~C. Stoica.
\newblock {B}ig {B}ang singularity in the
  {F}riedmann-{L}ema{\^i}tre-{R}obertson-{W}alker spacetime.
\newblock {\em The International Conference of Differential Geometry and
  Dynamical Systems}, October 2013.
\newblock \href{http://arxiv.org/abs/1112.4508}{arXiv:gr-qc/1112.4508}.

\bibitem{Sto11g}
O.~C. Stoica.
\newblock {K}err-{N}ewman solutions with analytic singularity and no closed
  timelike curves.
\newblock {\em To appear in U.P.B. Sci. Bull., Series A}, 2013.
\newblock \href{http://arxiv.org/abs/1111.7082}{arXiv:gr-qc/1111.7082}.

\bibitem{Sto12c}
O.~C. Stoica.
\newblock On the {W}eyl curvature hypothesis.
\newblock {\em Ann. of Phys.}, 338:186--194, November 2013.
\newblock \href{http://arxiv.org/abs/1203.3382}{arXiv:gr-qc/1203.3382}.

\bibitem{Sto12b}
O.~C. Stoica.
\newblock Einstein equation at singularities.
\newblock {\em Central European Journal of Physics}, pages 1--9, 2014.
\newblock \href{http://arxiv.org/abs/1203.2140}{arXiv:gr-qc/1203.2140}.

\bibitem{Str41}
K.~Strubecker.
\newblock {Differentialgeometrie des isotropen Raumes. I. Theorie der
  Raumkurven}.
\newblock {\em Sitzungsber. Akad. Wiss. Wien, Math.-Naturw. Kl., Abt. IIa},
  150:1--53, 1941.

\bibitem{Str42a}
K.~Strubecker.
\newblock {Differentialgeometrie des isotropen Raumes. II. Die Fl{\"a}chen
  konstanter Relativkr{\"u}mmung $K= rt- s^2$}.
\newblock {\em Math. Z.}, 47(1):743--777, 1942.

\bibitem{Str42b}
K.~Strubecker.
\newblock {Differentialgeometrie des isotropen Raumes. III.
  Fl{\"a}chentheorie}.
\newblock {\em Math. Z.}, 48(1):369--427, 1942.

\bibitem{Str45}
K.~Strubecker.
\newblock {Differentialgeometrie des isotropen Raumes. IV. Theorie der
  fl{\"a}chentreuen Abbildungen der Ebene}.
\newblock {\em Math. Z.}, 50(1):1--92, 1944.

\bibitem{Vra42}
G.~Vr\u{a}nceanu.
\newblock {Sur les invariants des espaces de Riemann singuliers}.
\newblock {\em Disqu. Math. Phys. {B}ucure{\c s}ti}, 2:253--281, 1942.

\bibitem{Yon97}
G.~Yoneda, H.~Shinkai, and A.~Nakamichi.
\newblock {Trick for Passing Degenerate Points in the Ashtekar Formulation}.
\newblock {\em Phys. Rev. D}, 56(4):2086--2093, 1997.

\end{thebibliography}

\end{document}